\documentclass[11pt]{article}

\usepackage[utf8]{inputenc}
\usepackage[british]{babel}
\usepackage[a4paper, margin=2.5cm]{geometry}
\usepackage{amsmath, amsthm, amssymb, amsfonts}
\usepackage{mathtools}
\usepackage{thmtools}
\usepackage[shortlabels, inline]{enumitem}
\usepackage[font={small, it}]{caption}
\usepackage{subcaption}
\usepackage{microtype}
\usepackage{xcolor}
\usepackage{mathabx}
\usepackage{hyperref}
\usepackage{bm}
\usepackage{bbm}


\newcommand{\cF}{\ensuremath{\mathcal F}}
\newcommand{\cG}{\ensuremath{\mathcal G}}
\newcommand{\cH}{\ensuremath{\mathcal H}}
\newcommand{\cI}{\ensuremath{\mathcal I}}

\newcommand{\cP}{\ensuremath{\mathcal P}}

\newcommand{\cR}{\ensuremath{\mathcal R}}
\newcommand{\cS}{\ensuremath{\mathcal S}}

\newcommand{\cU}{\ensuremath{\mathcal U}}

\newcommand{\cZ}{\ensuremath{\mathcal Z}}


\newcommand{\eps}{\varepsilon}
\renewcommand{\phi}{\varphi}
\renewcommand{\rho}{\varrho}


\DeclareMathOperator*{\N}{\mathbb{N}}

\DeclarePairedDelimiter\ceil{\lceil}{\rceil}
\DeclarePairedDelimiter\floor{\lfloor}{\rfloor}

\let\setminus=\smallsetminus

\newcommand{\Gnp}{G(n, p)}

\newcommand{\Hnp}{\mathcal{H}^3(n,p)}


\newcommand{\osref}[2]{%
  \setlength\abovedisplayskip{5pt plus 2pt minus 2pt}
  \setlength\abovedisplayshortskip{5pt plus 2pt minus 2pt}
  \ensuremath{\overset{\text{#1}}{#2}}
}



\declaretheorem[parent=section]{theorem}
\declaretheorem[sibling=theorem]{lemma}

\declaretheorem[sibling=theorem]{claim}

\declaretheorem[sibling=theorem,style=definition]{definition}


\setlength{\parindent}{0em}
\setlength{\parskip}{0.5em}
\setlist{itemsep=0.1em, topsep=0.1em, parsep=0.1em, partopsep=0.1em}

\hypersetup{
  colorlinks,
  linkcolor={red!60!black},
  citecolor={green!50!black},
  urlcolor={blue!80!black}
}

\colorlet{RoyalRed}{red!70!black}
\definecolor{RoyalBlue}{rgb}{0.25, 0.41, 0.88}
\definecolor{RoyalAzure}{rgb}{0.0, 0.22, 0.66}

\newlength{\bibitemsep}\setlength{\bibitemsep}{0.5pt}
\newlength{\bibparskip}\setlength{\bibparskip}{0.5pt}
\let\oldthebibliography\thebibliography
\renewcommand\thebibliography[1]{%
  \oldthebibliography{#1}%
  \setlength{\parskip}{\bibitemsep}%
  \setlength{\itemsep}{\bibparskip}%
}

\newcounter{propcnt} 

\newlist{alphenum}{enumerate}{1}
\setlist[alphenum,1]{%
  label=\normalfont{\bfseries{(\Alph{propcnt}{\arabic*})}},
  ref=\normalfont{(\Alph{propcnt}{\arabic*})},
  leftmargin = \parindent+3.5em,
}


\usepackage{tikz}
\usetikzlibrary{topaths,calc}

\usepackage[ruled,vlined]{algorithm2e}
\SetKwInput{KwInput}{Input}                
\SetKwInput{KwOutput}{Output}              


\newcommand{\mbw}{\ensuremath{\mathbf{w}}}

\definecolor{ao(english)}{rgb}{0.0, 0.5, 0.0}

\title{Transference for loose Hamilton cycles in random $3$-uniform hypergraphs}
\author{
  Kalina Petrova\thanks{Institute of Theoretical Computer Science, ETH
    Z\"{u}rich, 8092 Z\"{u}rich, Switzerland \newline Email:
    \{\texttt{kpetrova}\textbar \texttt{mtrujic}\}\texttt{@inf.ethz.ch}} \textsuperscript{,}\thanks{Research supported by grant no.\ CRSII5 173721 of the Swiss National Science Foundation}
  \and
  Milo\v{s} Truji\'{c}\footnotemark[1] \textsuperscript{,}\thanks{Research
    supported by grant no.\ 200020 197138 of the Swiss National Science
  Foundation.}
}
\date{}

\begin{document}
\maketitle

\begin{abstract}
  A loose Hamilton cycle in a hypergraph is a cyclic sequence of edges covering
  all vertices in which only every two consecutive edges intersect and do so in
  exactly one vertex. With Dirac's theorem in mind, it is natural to ask what
  minimum $d$-degree condition guarantees the existence of a loose Hamilton
  cycle in a $k$-uniform hypergraph. For $k=3$ and each $d \in \{1,2\}$, the
  necessary and sufficient such condition is known precisely. We show that these
  results adhere to a `transference principle' to their sparse random analogues.
  The proof combines several ideas from the graph setting and relies on the
  absorbing method. In particular, we employ a novel approach of Kwan and Ferber
  for finding absorbers in subgraphs of sparse hypergraphs via a contraction
  procedure. In the case of $d = 2$, our findings are asymptotically optimal.
\end{abstract}

\section{Introduction}

The question of deciding when a given graph is Hamiltonian is in general
notoriously difficult and was included in Karp's original list of 21 NP-complete
problems~\cite{karp1972reducibility}. Being a fundamental problem in graph
theory (and computer science), Hamiltonicity has inspired a long line of work
exploring sufficient conditions for it. Perhaps the best known among those is
the classical theorem of Dirac~\cite{dirac1952some}: every graph on $n \geq 3$
vertices with minimum degree at least $n/2$ contains a Hamilton cycle. Another
regime in which the problem is understood better than in the general case is
that of random graphs (also, more broadly, quasi-random graphs and expanders).
In that regard, P{\'o}sa~\cite{posa1976hamiltonian} and independently
Korshunov~\cite{korshunov1976solution} proved that if $p \ge C\log n/n$, for
some constant $C > 0$, then the Erd\H{o}s-R\'{e}nyi binomial random graph $G(n,
p)$\footnote{$\Gnp$ stands for a graph on $n$ vertices in which each edge exists
with probability $p = p(n) \in (0,1)$ independently.} is \emph{with high
probability}\footnote{With high probability (or w.h.p.\ for brevity) means with
probability going to $1$ as $n$ tends to infinity.} Hamiltonian. The more
precise value $p = p(n)$ for which the former holds was later determined by
Koml{\'o}s and Szemer{\'e}di~\cite{komlos1983limit}, and an even stronger,
so-called \emph{hitting-time} result, was shown by Ajtai, Koml{\'o}s, and
Szemer{\'e}di~\cite{ajtai1985first} and independently by
Bollob{\'a}s~\cite{bollobas1984evolution}.

Inquiring further into properties of random graphs, Sudakov and
Vu~\cite{sudakov2008local} asked how \emph{resilient} $\Gnp$ is with respect to
having a Hamilton cycle. A graph $G$ is $\alpha$-resilient with respect to a
property $\cP$ if after the removal of at most an $\alpha$-fraction of edges
incident to every vertex of $G$, the resulting graph (still) contains $\cP$.
Observe that Dirac's theorem states exactly that: the complete graph on $n$
vertices $K_n$ is $1/2$-resilient with respect to Hamiltonicity (with $1/2$
being optimal as witnessed by two disjoint cliques of size $n/2$). The work of
Sudakov and Vu initiated a systematic study of minimum degree requirements in
the flavour of Dirac's theorem for random graphs as they showed that for $p \gg
\log^4 n/n$, w.h.p.\ $\Gnp$ is $(1/2-o(1))$-resilient with respect to
Hamiltonicity. A full analogue of Dirac's theorem for random graphs was later
established by Lee and Sudakov~\cite{lee2012dirac}: if $p \gg \log n/n$ then
w.h.p.\ $\Gnp$ is $(1/2-o(1))$-resilient with respect to Hamiltonicity. Even
more, the absolutely best-possible hitting-time results were recently obtained
by Montgomery~\cite{montgomery2019hamiltonicity} and independently Nenadov,
Steger, and the second author~\cite{nenadov2019resilience}.

There are, however, certain deficiencies of the basic notion of resilience in
the usual binomial random graph---for example, it is unable to capture the
behaviour of $\Gnp$ with respect to containment of large structures riddled with
triangles. Namely, as soon as $p = o(1)$, one can easily prevent as many as
$\Theta(p^{-2})$ vertices from being in a triangle while removing only $o(np)$
edges incident to each vertex (see~\cite{balogh2012corradi,
huang2012bandwidth}). This makes the study of resilience for, e.g., a
triangle-factor or any power of a Hamilton cycle futile. Recently, Fischer,
Steger, \v{S}kori\'{c}, and the second author extended the notion of resilience
to $H$-resilience in order to study robustness of $\Gnp$ with respect to the
containment of the square of a Hamilton cycle~\cite{fischer2022triangle}.
Roughly speaking, they determined the smallest $\alpha \in [0, 1]$ such that,
for $p \gg \log^3 n/\sqrt{n}$, w.h.p.\ every $G \subseteq \Gnp$ in which every
vertex belongs to at least $(\alpha + o(1))p^3 \binom{n}{2}$ triangles contains
the square of a Hamilton cycle.

Such a concept naturally corresponds to resilience in hypergraphs, with the
added advantage that edges in a random hypergraph are independent, unlike copies
of $H$ in $\Gnp$. Thus, the hypergraph counterpart to the questions above can be
a solid test case for developing new ideas and techniques. Of course, studying
resilience of Hamiltonicity in hypergraphs is also of independent interest as a
natural `high-dimensional' generalisation of one of the most important questions
in graph theory to a radically more challenging setting.

In this paper, we are concerned with Dirac-type conditions, hence resilience,
for Hamiltonicity in random $k$-uniform\footnote{A hypergraph is $k$-uniform,
also called $k$-\emph{graph} for short, if every edge consists of exactly $k$
vertices.} hypergraphs. The notions of cycles and degrees do not generalise
unambiguously to the hypergraph setting, so in what follows we make them more
specific. For some $1 \leq \ell < k$, an $\ell$-cycle in a $k$-graph is a cyclic
sequence of vertices such that every vertex belongs to some edge, every edge
consists of $k$ consecutive vertices, and each two consecutive edges overlap in 
exactly $\ell$ vertices. The most
studied special cases are $\ell=1$ and $\ell=k-1$, which are referred to as a
\emph{loose} and a \emph{tight} cycle, respectively. Note that the number of
vertices in any $\ell$-cycle is necessarily divisible\footnote{From now on, we
always assume that the hypergraph we are dealing with has its number of
vertices divisible by the appropriate integer needed for a loose Hamilton
cycle to exist.} by $k-\ell$. In a $k$-graph $\cH$, for some $1 \leq d < k$, the
$d$-degree $\deg_\cH(S)$ of a $d$-element set of vertices $S \subseteq V(\cH)$
is the number of edges $A \in E(\cH)$ such that $S \subseteq A$. The $1$-degree
$\deg_\cH(v)$ of a vertex $v$ is referred to simply as its degree, and the
$(k-1)$-degree of a $(k-1)$-set as its codegree. The minimum $d$-degree
$\delta_d(\cH)$ of $\cH$ is the minimum value among the $d$-degrees over all
$d$-sets $S \subseteq V(\cH)$.

As is the case with graphs, there is a large body of research studying various
Dirac-type conditions in hypergraphs, that is smallest $d$-degree which implies
existence of Hamilton $\ell$-cycles (e.g.~\cite{reiher2019minimum,
 rodl2006dirac, kuhn2010hamilton}, surveys~\cite{kuhn2014hamilton,
zhao2016recent} and the references within). Most closely related to the present
work, for $d = k - 1$, Keevash, K\"{u}hn, Mycroft, and
Osthus~\cite{keevash2011loose} and independently H\'{a}n and
Schacht~\cite{han2010dirac} showed that every (sufficiently large) $n$-vertex
$k$-graph $\cH$ with $\delta_{k-1}(\cH) \geq (\frac{1}{2(k-1)} + o(1))n$
contains a loose Hamilton cycle, which is asymptotically optimal (up to the
$o(n)$ term). In case of $3$-graphs, it was previously known that $\delta_d(\cH)
\geq (\delta_d + o(1))\binom{n-d}{3-d}$ implies (loose) Hamiltonicity, with
$\delta_1 = 7/16$ for $d = 1$ or $\delta_2 = 1/4$ for $d = 2$
(see~\cite{buss2013minimum, kuhn2006loose} and their strengthening
\cite{han2015minimum, czygrinow2014tight}). Neither of the values $\delta_1 =
7/16, \delta_2 = 1/4$ can be improved upon.

When it comes to (sparse) random structures, Dudek, Frieze, Loh, and
Speiss~\cite{dudek2012optimal} showed that
$\mathcal{H}^k(n,p)$\footnote{$\mathcal{H}^k(n,p)$ stands for the (random) graph
on $n$ vertices in which each set of $k$ vertices is chosen as an edge
independently with probability $p$.} w.h.p.\ contains a loose Hamilton cycle,
provided that $p \gg \log n/n^{k-1}$ (this generalises a previous result of
Frieze~\cite{frieze2010loose}; see also \cite{ferber2015closing} for a short
proof and \cite{park2023proof} for a much more general framework from which this
follows directly). This result is asymptotically optimal, since for $p$ of lower
order of magnitude, $\mathcal{H}^k(n,p)$ w.h.p.\ has isolated vertices.

Our main contribution is a resilience variant of this result for $3$-graphs, thus
transferring the formerly mentioned Dirac-type statements to the sparse random
setting.

\begin{theorem}\label{thm:main-theorem}
  Let $d \in \{1,2\}$. For every $\gamma > 0$ there is a $C > 0$ such that the
  following holds. Suppose $p \geq C\log n \cdot \max\{n^{-3/2}, n^{-3+d}\}$ and
  $n$ is even. Then $\Hnp$ w.h.p.\ has the property that every spanning subgraph
  $G \subseteq \Hnp$ with $\delta_d(G) \geq (\delta_d+\gamma)p\binom{n-d}{3-d}$
  contains a loose Hamilton cycle.
\end{theorem}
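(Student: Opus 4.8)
The plan is to establish \ref{thm:main-theorem} by the absorbing method, organised into the familiar three modules---an absorbing structure, a reservoir for connecting, and an almost-spanning loose path---but with the absorbing module redesigned to cope with the sparsity of $\Hnp$. Fix $\gamma>0$ and introduce auxiliary constants $0<\mu\ll\gamma$ (and, inside the absorbing argument, a further constant $\mu\gg\nu\gg 1/C$). Since a loose Hamilton cycle on $n$ vertices has exactly $n/2$ edges, ``$n$ even'' is precisely the divisibility requirement, and we will produce the cycle by building a long loose path and closing it up. Each structural claim below will be phrased as ``w.h.p.\ $\Hnp$ has the property that \emph{every} spanning $G\subseteq\Hnp$ with $\delta_d(G)\ge(\delta_d+\gamma)p\binom{n-d}{3-d}$ satisfies \dots'', so that the random hypergraph is exposed first and the adversary picks $G$ afterwards. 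A routine concentration argument (this is exactly where the lower bound on $p$ forces $p(n-d)\gg\log n$ when $d=2$ and $pn^{2}\gg\log n$ when $d=1$) then shows every such $G$ is a pseudorandom host in the sense we need: all $d$-degrees, $(d+1)$-degrees and small subhypergraph counts lie within the expected range simultaneously.

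\emph{Connecting.} First I would prove a connecting lemma: w.h.p.\ there is a set $R\subseteq V$ with $|R|=\Theta(\mu n)$ such that, for every valid $G$, every $R'\subseteq R$ with $|R'|\ge|R|/2$, and every two disjoint ordered pairs $(a_1,a_2),(b_1,b_2)$ of vertices outside $R'$, the hypergraph $G$ contains a loose path whose first edge contains $a_1,a_2$, whose last edge contains $b_1,b_2$, and all of whose remaining vertices lie in $R'$. This follows from a union bound: for a connecting path of a suitable bounded length $\ell$ (large enough that the expected number of candidates, roughly $|R'|^{2\ell-3}p^{\ell}$, is a large power of $n$), the inherited $d$-degree condition can destroy at most a $1-\Omega(1)$ proportion of them, and there are only polynomially many pairs of pairs to control. \emph{Almost-spanning path.} Next, an almost-spanning lemma: w.h.p., for every valid $G$ and every $W\subseteq V$ with $|W|\ge n-2\mu n$ that inherits a condition $\delta_d(G[W])\ge(\delta_d+\gamma/2)p\binom{|W|-d}{3-d}$, the hypergraph $G[W]$ contains a loose path covering all but at most $\mu^{2}n$ vertices of $W$. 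This is where the extremal constants $\delta_1=7/16$ and $\delta_2=1/4$ are used in full: the proof is a sparse, resilient version of the P\'osa-type rotation--extension argument (equivalently, an augmenting-walk argument) in which the degree threshold guarantees that any loose path in $G[W]$ missing more than $\mu^2 n$ vertices admits a rotation or an extension, the pseudorandomness of the host being used to locate the required pivot edges.

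The crux is the \emph{absorbing lemma}: w.h.p., for every valid $G$ there is a loose path $P_{A}\subseteq G$ with $|V(P_{A})|\le\mu n$ such that for every $U\subseteq V\setminus V(P_{A})$ with $|U|\le\mu^{2}n$ and $|U|$ even, $G$ contains a loose path on vertex set exactly $V(P_{A})\cup U$ with the same two endpoints as $P_{A}$. The obstacle is genuine: when $d=1$ and $p$ is as small as $n^{-3/2}$ one has $np\to 0$, so a fixed bounded-size absorbing gadget for a given vertex (or vertex-pair) is present in $\Hnp$ only with probability $o(1)$, and the usual ``many gadgets per vertex, a positive fraction survives the adversary'' scheme breaks down entirely. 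To get around this I would adapt the contraction technique of Ferber and Kwan: identify carefully chosen small groups of vertices of $\Hnp$ into single ``super-vertices'' to pass to an auxiliary hypergraph $\cH'$ which is effectively dense, in which the inherited degree condition makes absorbing configurations both abundant \emph{and} robust to adversarial deletions (a second-moment argument over $\cH'$ being what makes $p\gg n^{-3/2}$ the binding threshold when $d=1$); one then shows a robustly chosen family of these configurations lifts back to honest absorbing gadgets in $G$, one per vertex-pair, and splices them together into a single loose path $P_{A}$ using the connecting lemma restricted to a private part of $R$. A standard greedy/probabilistic argument finally lets any small even $U$ be absorbed by switching the gadgets on a partition of $U$ into pairs.

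With the three lemmas in hand the assembly is routine. Apply the absorbing lemma to obtain $P_{A}$ and the connecting lemma to obtain $R$; put $W=V\setminus(V(P_{A})\cup R)$, which still inherits the required degree condition because only $O(\mu n)=o(\gamma n)$ vertices were deleted. The almost-spanning lemma applied to $G[W]$ yields a loose path $P$ covering $W$ except for a set $U_{0}$ with $|U_{0}|\le\mu^{2}n/2$. Using the connecting lemma twice, with disjoint reservoir portions, to join an end of $P$ to an end of $P_{A}$ and the remaining two ends to each other, we obtain a loose cycle $C$ with $V\setminus V(C)=:U$ of size at most $\mu^{2}n$; since $C$ is a loose cycle and $n$ is even, $|U|$ is automatically even, so no parity correction is needed. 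Finally the absorbing property rewrites the $P_{A}$-segment of $C$ as a loose path through $V(P_{A})\cup U$ with the same endpoints, turning $C$ into a loose Hamilton cycle of $G$. The main difficulty is the absorbing lemma via contraction; the resilient almost-spanning lemma at the exact degree threshold is the secondary difficulty, while the connecting and assembly steps are standard given the pseudorandomness of $\Hnp$.
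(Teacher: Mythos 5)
Your overall blueprint (reservoir plus connecting lemma plus an absorber built via the Ferber--Kwan contraction trick to get around the $np\to 0$ obstacle when $d=1$) is the same as the paper's, and the identification of $p\gtrsim n^{-3/2}\log n$ as the binding threshold for $d=1$ is correct. However, two of your modules are not sound as stated.

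First, the almost-spanning module. You claim a single loose path covering all but $\mu^2 n$ vertices of $G[W]$ via a ``sparse, resilient P\'osa-type rotation--extension'' argument. Rotation--extension is not known to work for loose paths in $3$-graphs at the exact Dirac thresholds $\delta_1=7/16$ and $\delta_2=1/4$: even in the dense setting the extremal results \cite{buss2013minimum,kuhn2006loose} are proved by regularity plus absorbing, not by rotations, and adapting rotations to the rigid loose-path structure (where half the vertices have degree one in the path) is itself an open-ended project. The paper instead proves only the much weaker Lemma~\ref{lem:almost-path-cover}: a cover by $\rho n$ vertex-disjoint loose paths, obtained by applying the sparse weak hypergraph regularity lemma, invoking the dense Dirac theorem (Theorem~\ref{thm:dense-graph}) as a black box inside small random parts of the reduced graph, and then running a DFS (Lemma~\ref{lem:covering-regular-triples}) inside each regular triple. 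This is a genuinely different and much safer route; the $\rho n$ paths are then stitched together with the connecting lemma, so nothing is lost by allowing many pieces. You should either reproduce this regularity-based cover or supply a real proof of the claimed rotation lemma, which as written has no support. A related issue: you assert that $W=V\setminus(V(P_A)\cup R)$ ``still inherits'' the full minimum $d$-degree condition after deleting $O(\mu n)$ vertices. For $d=2$ this fails for all pairs; only almost all pairs keep their codegree after an adversarial deletion of $\Theta(\mu n)$ vertices (Lemma~\ref{lem:robust-codegree-inh}), and for $d=1$ one must additionally excise a further $O(\sqrt n)$ bad vertices (Lemma~\ref{lem:robust-degree-inh}). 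Your almost-spanning lemma is therefore stated with a hypothesis that the host cannot supply, whereas the paper's Lemma~\ref{lem:almost-path-cover} only assumes ``almost all $d$-sets'' precisely for this reason.

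Second, the assembly has a counting error. After connecting $P$ to $P_A$ you have a loose cycle $C$ that omits $U_0$ together with essentially all of $R$, since the two connecting paths each have bounded length and hence consume only $O(1)$ vertices of $R$. Thus $|V\setminus V(C)|=\Theta(\mu n)$, not $O(\mu^2 n)$, and your absorbing path $P_A$ (which by your own statement handles only sets of size at most $\mu^2 n$) cannot swallow it. The paper's fix is structural: the absorber is an $(a,b,R)$-absorber (Definition~\ref{def:absorbing-graph}), purpose-built to absorb exactly the unused remainder $R\setminus R'$ of the reservoir, and the parity bookkeeping is done at that point (checking $|V(A)\setminus R'|$ odd). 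If you want a universal small-set absorber instead, you must either route the almost-spanning path through all of $R$ as well---which conflicts with keeping $R$ free for connecting---or build the absorber to swallow $\Theta(\mu n)$ vertices, which changes its size and the constants throughout. As written the pieces do not fit together.
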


The constants $\delta_1 = 7/16$ and $\delta_2 = 1/4$ are optimal---in other
words, w.h.p.\ there exists a subgraph $G$ of $\Hnp$ with minimum $d$-degree
$(\delta_d - o(1))\binom{n-d}{3-d}$ which does not have a loose Hamilton cycle.
One can see this by, e.g., considering analogues of dense extremal constructions
from \cite{buss2013minimum} (in case $d = 1$) and \cite{kuhn2006loose} (in case
$d = 2$).

The more interesting part are the limitations for the density $p$. Observe that
one can more concisely summarise the density requirements as $p \geq C\log
n/n^{2-d/2}$, however we chose to present them separately to underline their
distinct origins. In case $d = 2$, if $p = c\log n/n$ for some small $c > 0$,
then w.h.p.\ $\Hnp$ contains pairs of vertices with $2$-degree equal to zero,
and thus Theorem~\ref{thm:main-theorem} is optimal (up to the multiplicative
constant). However, the bound $p \geq Cn^{-3/2}\log n$ for $d = 1$ is
`artificial' but crucial for our proof technique. It is highly likely, but
probably quite challenging to prove, that the result should remain true all the
way down to $p = \Omega(\log n/n^2)$---the threshold for appearance of a loose
Hamilton cycle in $\Hnp$. It seems that one would need to explore completely
different techniques in order to establish this.

Note that this provides a resilience result for $\Hnp$ with respect to
containment of a loose Hamilton cycle---it shows that $\Hnp$ is
$(9/16-o(1))$-resilient for this property (similarly one can think of `codegree
resilience' in terms of $\delta_2$). This partially answers a question raised by
Frieze~\cite[Problem~56]{frieze2021hamilton} in his survey on Hamilton cycles in
random graphs. Being only the beginning of the story, a natural next step would
be an extension to hypergraphs of larger uniformities. One obstacle on the way
is that the corresponding dense Dirac condition with $d=1$ is not known for
$k$-graphs with $k \geq 4$ (a `transference' result could be obtained even
without knowing this value, see~\cite{ferber2022dirac}). In addition, our
methods do not seem to straightforwardly generalise to $k \geq 4$ regardless of
the type of degree considered.

Usually, establishing properties of hypergraphs is substantially more difficult
than in the case of their graph counterparts, hence it is not surprising that
not much is known about resilience of random hypergraphs. Prior to this,
Clemens, Ehrenm\"{u}ller, and Person~\cite{clemens2020dirac} studied resilience
of $\cH^k(n,p)$ with respect to the containment of a Hamilton Berge cycle, and, very
recently, Allen, Parczyk, and Pfenninger~\cite{allen2021resilience} proved that
$\cH^k(n,p)$ is resilient for containment of tight Hamilton cycles. On a related
note, Ferber and Kwan~\cite{ferber2022dirac} proved a very general `transference
principle' concerning perfect matchings in random $k$-graphs (see also
\cite{ferber2020co} for a specific result when $d = k - 1$).

On the whole, our proof follows a standard strategy for embedding relying on the
\emph{absorbing method}. In a nutshell, this method allows one to reduce the
problem of finding a \emph{spanning} structure to the usually significantly
easier one of finding an \emph{almost-spanning} one. For the latter we combine
several ideas originating from their graph counterparts such as the DFS
technique for finding long paths (Section~\ref{sec:path-cover}), path connection
techniques (Section~\ref{sec:connecting-lemma}), and the sparse regularity
method (Section~\ref{sec:sparse-regularity}). As always, the most difficult,
involved, and creative part comes in designing and finding the absorber (see
Definition~\ref{def:absorbing-graph} below). The `finding' part is partially
done through a contraction procedure of Ferber and Kwan which helps with finding
absorbers inside a regular partition of $\Hnp$. We discuss this, as well as the
absorbing method in general, in much greater detail in
Section~\ref{sec:absorbing-method}. All these ingredients are mixed together
following a usual recipe to give a proof of Theorem~\ref{thm:main-theorem}
(Section~\ref{sec:main-proof}).

\section{Preliminaries}

Our graph theoretic notation mostly follows standard textbooks in the area,
e.g.~\cite{bondy2008graph}. More specifically, for a (hyper)graph $G = (V, E)$
we let $v(G)$ and $e(G)$ denote the number of its vertices and edges,
respectively. Given a set of vertices $X \subseteq V(G)$, $G - X$ stands for the
induced graph $G[V(G) \setminus X]$. For sets $X, Y, Z \subseteq V(G)$ we let
$e_G(X, Y, Z)$ denote the number of triples $(x, y, z) \in X \times Y \times Z$
for which $xyz \in E(G)$. Instead of $e_G(\{x\}, Y, Z)$, we write $e_G(x, Y, Z)$
for brevity. Similarly, if $\cP \subseteq \binom{V(G)}{2}$ is a set of
\emph{pairs} of vertices, then $e_G(\cP, Z)$ counts the number of (ordered)
pairs $(\{x,y\}, z)$ for which $xyz \in E(G)$ with $\{x,y\} \in \cP$ and $z \in
Z$. For a set $W \subseteq V(G)$, we use $\deg_G(x, W)$, respectively
$\deg_G(\{x,y\},W)$, for the number of distinct pairs $\{y, z\}$ with $y, z \in
W$, respectively the number of vertices $z \in W$, for which $xyz \in E(G)$.
The neighbourhood $N_G(v, W)$ of a vertex $v$ in a set $W \subseteq V(G)$ refers
to the set of vertices $w \in W$ such that $uvw\in G$ for some $u \in W$, and we
denote $N_G(v, V(G))$ as $N_G(v)$ for brevity. Similarly, the neighbourhood
$N_G(\{v,u\},W)$ of a pair of vertices $v,u$ in a set $W \subseteq V(G)$ denotes
the set of vertices $w \in W$ with $uvw \in G$, and we abbreviate
$N_G(\{v,u\},V(G))$ to $N_G(v,u)$.

A \emph{loose path} of length $\ell \in \N$ is an ordered sequence of distinct
vertices $v_1,\dotsc,v_{2\ell+1}$ and $\ell$ edges: $v_{2i-1}v_{2i}v_{2i+1} \in
E(G)$ for all $i \in [\ell]$. Note that a loose path always consists of an odd
number of vertices. Throughout, whenever we say path we mean loose path and we
write $xy$-path for a path whose start- and end-points are $x$ and $y$. A
(loose) cycle is defined similarly.

All logarithms are in base $e$. For $n \in \N$, $[n]$ stands for the set of
first $n$ integers, that is $[n] := \{1,\dotsc,n\}$. We use standard asymptotic
notation $o, O, \omega, \Omega$, and $\Theta$. For a set $W$ and an integer $d$,
$\binom{W}{d}$ denotes the collection of all $d$-element subsets, $d$-sets for
brevity, of $W$, and $W^d$ denotes the collection of all distinct $d$-tuples
$(w_1,\dotsc,w_d)$ with $w_i \in W$ and $w_i \neq w_j$ for each $1 \leq i < j
\leq d$. When using set-theoretic notation, we treat tuples as corresponding
sets, e.g.\ $x \in (w_1,\dotsc,w_d)$ stands for $x \in \{w_1,\dotsc,w_d\}$ and
two tuples are disjoint if they do not share an element.

\subsection{Distribution of edges}

In this subsection we list some lemmas that give upper bounds on the number of
edges between various vertex sets in $\Hnp$.

\begin{lemma}\label{lem:varying-size-sets}
  For every $\gamma > 0$ and $C > 0$ there exists $\lambda > 0$ such that
  w.h.p.\ $G \sim \Hnp$ satisfies the following, provided that $n^2p \gg \log
  n$. There are no two disjoint sets $A, B \subseteq V(G)$ of sizes $a$ and $b$
  such that $1 \leq a \leq \lambda n$, $b \leq Ca$, and $e_G(A, B, V(G)) \geq
  \gamma a p \binom{n-1}{2}$.
\end{lemma}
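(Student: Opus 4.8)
The plan is a routine first-moment estimate followed by a union bound over the relevant pairs $(A,B)$, with $\lambda$ tuned at the very end to make the deviation we are bounding sufficiently large. Fix integers $a,b$ with $1\le a\le \lambda n$ and $b\le Ca$, and fix disjoint sets $A,B\subseteq V(G)$ with $|A|=a$ and $|B|=b$. The quantity $X:=e_G(A,B,V(G))$ is a sum of independent indicator variables, one for each ordered triple $(x,y,z)\in A\times B\times V(G)$ of distinct vertices, each forming an edge of $\Hnp$ with probability $p$. Hence $\E X\le abnp\le Ca^2np$. Writing $t:=\gamma a p\binom{n-1}{2}$ for the forbidden threshold, and using $\binom{n-1}{2}\ge n^2/4$ for $n$ large, we obtain
\[
  \frac{t}{\E X}\ \ge\ \frac{\gamma n}{4b}\ \ge\ \frac{\gamma}{4C\lambda},
\]
which we can force to exceed, say, $e^2$ by choosing $\lambda$ small enough in terms of $\gamma$ and $C$.

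Since $X$ is dominated by a bounded multiple of a sum of Bernoulli variables, a Chernoff-type upper-tail bound yields $\Pr[X\ge t]\le \exp(-ct)$ for an absolute constant $c>0$, once $t\ge e^2\,\E X$. Because $t\ge \tfrac{\gamma}{4}\,a\,n^2p$ and $n^2p\gg\log n$ by hypothesis, this probability is at most $\exp(-\omega(a\log n))$. Taking a union bound, there are at most $\binom{n}{a}\binom{n}{b}\le n^{(1+C)a}$ choices of $(A,B)$ for a fixed $(a,b)$, and only polynomially many choices of the pair $(a,b)$ with $a\le \lambda n$ and $b\le Ca$, so the probability that some forbidden pair exists is at most
\[
  \sum_{a\ge 1} n^{(1+C)a+O(1)}\exp\!\big(-\omega(a\log n)\big)\ =\ o(1),
\]
since the super-logarithmic factor coming from $n^2p\gg\log n$ eventually beats the fixed entropy exponent $(1+C)a$.

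The only step requiring any thought — and the reason the sparsity assumption is stated exactly as $n^2p\gg\log n$ — is this last balance: the enumeration cost contributes a fixed polynomial in $n$ per unit of $a$, while the concentration contributes $\exp(-\Theta(a\,n^2p))$, so it is precisely the divergence of $n^2p/\log n$ that makes the sum vanish. Everything else is mechanical. Two harmless technicalities: one may take $\lambda<1/(2C)$ so that a set $B$ of size $\lceil Ca\rceil$ always fits disjointly from $A$, and then reduce to $b=\lceil Ca\rceil$ using monotonicity of $e_G(A,B,V(G))$ in $B$ (or simply retain all values of $b$, which costs only the polynomial factor already absorbed above); and one should verify the constant multiple bounding $X$ in terms of a sum of Bernoullis, which is immediate since a single edge of $\Hnp$ contributes at most a bounded number of ordered triples $(x,y,z)$ to $X$.
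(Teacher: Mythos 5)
Your argument is correct and follows essentially the same route as the paper: fix $(A,B)$, note that the threshold $t=\gamma a p\binom{n-1}{2}$ exceeds the expectation $\E[e_G(A,B,V(G))]\le abnp$ by a factor that can be made arbitrarily large by shrinking $\lambda$, apply a Chernoff-type tail bound to get probability $e^{-\omega(a\log n)}$, and union-bound over the $\le n^{(1+C)a}$ choices of $(A,B)$. The paper phrases the tail estimate as the direct binomial bound $\binom{ab(n-2)}{m}p^m\le(4e\lambda C/\gamma)^m$ rather than invoking Chernoff, but this is the same calculation; in fact you are slightly more careful than the paper in flagging that $e_G(A,B,V(G))$ counts ordered triples and so can double-count an edge, which is handled by your (correct) observation that it is dominated by twice a sum of independent Bernoullis.
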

\begin{proof}
  Set $m := \gamma a p \binom{n-1}{2}$. Fix $a \leq \lambda n$, $b \leq Ca$, and
  two disjoint sets $A$ and $B$ of sizes $a$ and $b$, respectively. The
  probability that $e_G(A, B, V(G)) \geq m$ is at most
  \[
    \binom{a b (n-2)}{m} p^m \leq \Big(\frac{2e a b (n-2)}{\gamma a p
    (n-1)(n-2)}\Big)^m p^m \leq \Big(\frac{2eb}{\gamma (n-1)}\Big)^m \leq
    \Big(\frac{4e\lambda C}{\gamma}\Big)^m.
  \]
  This is at most $e^{-\omega(a \log n)}$ provided $\lambda$ is chosen small
  enough with respect to $\gamma$ and $C$. Then the union bound over at most
  $e^{(a+b)\log{n}} \leq e^{2Ca\log{n}}$ choices for the sets $A$ and $B$
  completes the proof.
\end{proof}

\begin{lemma}\label{lem:1-edge-concentration}
  For every $\eps \in (0, 1/300)$, w.h.p.\ $G \sim \Hnp$ satisfies the
  following. Let $X, Y \subseteq V(G)$ be sets of size $x$ and $y$.
  \begin{enumerate}[label=(\textit{\roman*})]
    \item\label{lem:1-edge-concentration-p1} If $y \leq x \leq \eps^{-3}\log
      n/(np)$ then $e_G(X, Y, V(G)) \leq \eps^{-4}x \log n$.
    \item\label{lem:1-edge-concentration-p2} If $x, y \geq \eps^{-3}\log
      n/(np)$ then $e_G(X, Y, V(G)) \leq (1+\eps)xynp$.
  \end{enumerate}
\end{lemma}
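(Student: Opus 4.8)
The plan is to fix the two set sizes $x$ and $y$, fix sets $X, Y \subseteq V(G)$ of those sizes, prove a strong tail bound on $e_G(X, Y, V(G))$, and finish by a union bound over all choices of $X$, $Y$ and all choices of sizes. The starting observation is the identity
\[
  e_G(X, Y, V(G)) = \sum_{e \in \binom{V(G)}{3}} c_e \, \1[e \in E(G)],
  \qquad c_e := |e \cap X|\cdot|e \cap Y| - |e \cap X \cap Y|,
\]
where $c_e \in \{0, 1, \dotsc, 9\}$ is the number of orderings $(u, v, w)$ of the vertices of $e$ with $u \in X$ and $v \in Y$. Since the indicators $\1[e \in E(G)]$ are independent $\mathrm{Bernoulli}(p)$ variables, this exhibits $e_G(X, Y, V(G))$ as a sum of independent random variables, each supported on $\{0, \dotsc, 9\}$, with mean $\mu := p(n-2)(xy - |X \cap Y|) \le xynp$, because there are exactly $(xy - |X \cap Y|)(n-2)$ ordered triples of distinct vertices $(u, v, w)$ with $u \in X$, $v \in Y$ and $w \in V(G)$.

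For part~\ref{lem:1-edge-concentration-p2} I would note that the variance of this sum is at most $\sum_e c_e^2 p \le 9\mu$ while each summand is at most $9$, so Bernstein's inequality yields, for $\eps < 1$,
\[
  \Pr\big[e_G(X, Y, V(G)) \ge (1 + \eps)xynp\big]
  \le \Pr\big[e_G(X, Y, V(G)) - \mu \ge \eps\, xynp\big]
  \le \exp\!\Big(-\tfrac{\eps^2 xynp}{24}\Big).
\]
Here is where the hypothesis $x, y \ge \eps^{-3}\log n/(np)$ enters: multiplying the two bounds by $ynp$ and $xnp$ respectively and adding gives $xynp \ge \tfrac12 \eps^{-3}(x + y)\log n$, so the exponent above is at least $\tfrac{1}{48}\eps^{-1}(x + y)\log n$, which comfortably beats $\log\!\big(\binom{n}{x}\binom{n}{y}\big) \le (x + y)\log n$ once $\eps$ is a small constant. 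Summing the resulting bound over all $\binom{n}{x}\binom{n}{y}$ pairs $(X, Y)$ and all (at most $n^2$) size pairs leaves a failure probability $o(1)$.

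For part~\ref{lem:1-edge-concentration-p1} the bound $y \le x \le \eps^{-3}\log n/(np)$ gives $\mu \le p n x y \le \eps^{-3} x \log n$, so the claimed bound $\eps^{-4}x\log n$ exceeds $\mu$ by a factor at least $\eps^{-1}$; an upper-tail Chernoff (or Bernstein) estimate then gives $\Pr[e_G(X, Y, V(G)) \ge \eps^{-4}x\log n] \le \exp(-\tfrac19 \eps^{-4}x\log n)$ provided $\eps < 1/300$ — and with this much slack one may even afford the crude bound $e_G(X, Y, V(G)) \le 9\,\mathrm{Bin}(M, p)$ with $M \le xyn$. A union bound over the $\binom{n}{x}\binom{n}{y} \le n^{2x}$ sets and over all sizes then reduces to $\sum_{x \ge 1} x \exp\!\big((2 - \tfrac19 \eps^{-4}) x \log n\big) = o(1)$.

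The one genuine subtlety — and the reason a bare binomial Chernoff bound does not suffice for part~\ref{lem:1-edge-concentration-p2} — is that $e_G(X, Y, V(G))$ is a \emph{weighted} sum of independent edge indicators with weights as large as $9$; bounding it crudely by $9\,\mathrm{Bin}(\cdot, p)$ would push the leading constant well past $1 + \eps$, so one is forced to keep the exact mean $p(n-2)(xy - |X \cap Y|) \le xynp$ and invoke a variance-sensitive (Bernstein-type) inequality. In part~\ref{lem:1-edge-concentration-p1} the factor-$\eps^{-1}$ gap between the true order of magnitude and the claimed bound makes any of the standard tail inequalities enough, so the only care needed there is in the bookkeeping of the union bound over sets of all sizes up to $\eps^{-3}\log n/(np)$.
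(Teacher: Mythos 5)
Your proof is correct, and it follows the same high-level strategy as the paper: fix set sizes, fix the sets, apply a tail bound on $e_G(X,Y,V(G))$, and union-bound over all choices. The one place where you diverge in substance is that you treat $e_G(X,Y,V(G))$ as the weighted sum $\sum_e c_e\1[e\in E(G)]$ of \emph{independent} edge indicators (with $c_e = |e\cap X||e\cap Y| - |e\cap X\cap Y|$, which is in fact bounded by $6$, though your looser bound of $9$ is harmless), and then apply Bernstein so that the constant multiplying $xynp$ in the exponent degrades only by a bounded factor coming from the weights. The paper, by contrast, simply writes $\binom{xyn}{m}p^m$ for part~(i) and a bare binomial Chernoff tail for part~(ii), implicitly treating the $\leq xyn$ ordered-triple indicators as if they were independent; since a single $3$-set contributes up to $6$ ordered triples, this silently costs a constant factor in the exponent. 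That constant does not change the $o(1)$ conclusion after the union bound, but your $c_e$-decomposition plus Bernstein is the rigorous way to justify the step, and your closing remark about why the crude $9\,\mathrm{Bin}(\cdot,p)$ domination would fail for part~(ii) is well taken. In short: same approach, cleaner execution of the concentration step.
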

\begin{proof}
  Let $t := \eps^{-3}\log n/(np)$. The probability that there exist $X$, $Y$ of
  sizes $y \leq x \leq t$ for which $e_G(X, Y, V(G)) > \eps^{-4}x \log n =: m$
  is at most
  \begin{align*}
    \sum_{x=1}^{t} \binom{n}{x} \sum_{y=1}^{x} \binom{n}{y} \binom{xyn}{m}
    p^m \leq \sum_{x=1}^{t} \sum_{y=1}^{x} n^{2x} \Big(\frac{ex^2np}{m}\Big)^m
    & \leq \sum_{x=1}^{t} \sum_{y=1}^{x} n^{2x} \Big(\frac{ex t np}{\eps^{-4} x
    \log{n}}\Big)^m \\
    & \leq \sum_{x=1}^{t} \sum_{y=1}^{x} n^{2x} e^{-10x\log n} = o(1).
  \end{align*}
  If $x, y \geq t$, from Chernoff's inequality (see,
  e.g.~\cite[Corollary~2.3]{janson2011random})
  \[
    \Pr[e_G(X, Y, V(G)) > (1+\eps)xynp] \leq e^{-\frac{\eps^2}{3}xynp}.
  \]
  Then by the union bound, the probability that the property of the lemma fails
  is at most
  \begin{align*}
    \sum_{x=t}^{n} \binom{n}{x} \sum_{y=t}^{n} \binom{n}{y} \cdot
    e^{-\frac{\eps^2}{3} xynp}
    & \leq \sum_{x=t}^{n} \sum_{y=t}^{n} e^{x\log n} e^{y\log n} \cdot
    e^{-\frac{\eps^2}{3}xynp} \\
    & \leq \sum_{x=t}^{n} \sum_{y=t}^{n} e^{2\max\{x,y\}\log n} \cdot
    e^{-\frac{1}{3\eps}\max\{x,y\} \log n} = o(1),
  \end{align*}
  once again.
\end{proof}

Proving the next two lemmas follows similar steps as for the one above, using a
straightforward application of Chernoff's inequality and the union bound. Thus,
we omit the proofs.

\begin{lemma}\label{lem:2-edge-concentration}
  For every $\eps \in (0, 1/300)$, w.h.p.\ $G \sim \Hnp$ satisfies the following.
  Let $W \subseteq V(G)$ and $\cP \subseteq \binom{V(G) \setminus W}{2}$.
  \begin{enumerate}[label=(\textit{\roman*})]
    \item\label{lem:2-edge-concentration-p1} If $|W| \leq |\cP| \leq
      \eps^{-3}\log n/p$ then $e_G(\cP, W) \leq \eps^{-4}|\cP|\log n$.
    \item\label{lem:2-edge-concentration-p2} If $|W|, |\cP| \geq \eps^{-3}\log
      n/p$ then $e_G(\cP, W) \leq (1+\eps)|\cP||W|p$.
  \end{enumerate}
\end{lemma}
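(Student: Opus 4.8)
The plan is to follow the proof of Lemma~\ref{lem:1-edge-concentration} essentially line by line, with the pairs in $\cP$ playing the role of the vertices of $X$. The one point worth noting at the outset is that $e_G(\cP, W)$ is distributed as $\mathrm{Bin}(|\cP|\cdot|W|,\, p)$: any pair $(\{x,y\}, z)$ counted by $e_G(\cP, W)$ has $x, y \notin W$ (since $\cP \subseteq \binom{V(G)\setminus W}{2}$) and $z \in W$, so $z$ is the \emph{unique} vertex of the triple $\{x,y,z\}$ lying in $W$; hence distinct such pairs determine distinct potential edges of $G$, there are exactly $|\cP|\cdot|W|$ of them, and each is present independently with probability $p$. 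Given this, both parts are routine first-moment/Chernoff union bounds, exactly as before.

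For part~\ref{lem:2-edge-concentration-p1}, set $t := \eps^{-3}\log n/p$, fix sizes $w := |W| \le q := |\cP| \le t$, and put $m := \eps^{-4}q\log n$. For a fixed admissible pair $(\cP, W)$,
\[
  \Pr\big[e_G(\cP, W) > m\big] \le \binom{qw}{m}p^m \le \Big(\frac{eqwp}{m}\Big)^m \le \Big(\frac{eqtp}{\eps^{-4}q\log n}\Big)^m = (e\eps)^m \le e^{-4q\log n},
\]
where we used $w \le t$, $tp = \eps^{-3}\log n$, and $\eps < 1/300$ (so $\log(1/(e\eps)) > 4$ and $m \ge q\log n$). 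Since there are at most $\binom{\binom{n}{2}}{q}\binom{n}{w} \le n^{3q}$ choices of $(\cP, W)$ of these sizes, summing over them and over $1 \le w \le q \le t$ bounds the failure probability by $\sum_{q \le t}\sum_{w \le q} n^{3q}e^{-4q\log n} = o(1)$.

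For part~\ref{lem:2-edge-concentration-p2}, fix sizes $q := |\cP| \ge t$ and $w := |W| \ge t$. By Chernoff's inequality~\cite[Corollary~2.3]{janson2011random}, for a fixed $(\cP, W)$ we have $\Pr[e_G(\cP, W) > (1+\eps)qwp] \le e^{-\frac{\eps^2}{3}qwp}$, and since $q, w \ge t$ imply $qwp \ge \max\{q,w\}\cdot tp = \eps^{-3}\max\{q,w\}\log n$, this is at most $e^{-\frac{1}{3\eps}\max\{q,w\}\log n}$. As there are at most $n^{2q}n^{w} \le e^{3\max\{q,w\}\log n}$ choices of $(\cP, W)$ with these sizes and $\frac{1}{3\eps} > 100$ (because $\eps < 1/300$), the union bound over all $t \le q, w$ bounds the failure probability by $\sum_{q, w} e^{-\max\{q,w\}\log n} = o(1)$. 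No genuine obstacle is expected here: the whole argument is structurally identical to Lemma~\ref{lem:1-edge-concentration}, the only mild care being the no-double-counting observation above, and the same template also yields the remaining concentration lemma.
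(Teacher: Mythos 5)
Your proof is correct and follows exactly the approach the paper intends: the paper omits this proof, noting only that it "follows similar steps" to Lemma~\ref{lem:1-edge-concentration}, and your argument is indeed the same first-moment/Chernoff-plus-union-bound template with pairs in $\cP$ playing the role of vertices in $X$. The observation that each pair $(\{x,y\},z)$ with $\{x,y\}\in\cP\subseteq\binom{V(G)\setminus W}{2}$ and $z\in W$ determines a distinct potential edge (so that $e_G(\cP,W)$ is a sum of $|\cP||W|$ independent Bernoulli$(p)$ variables) is the one detail worth spelling out, and you have done so correctly.
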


\begin{lemma}\label{lem:general-edge-concentration}
  For every $\eps > 0$, w.h.p.\ $G \sim \Hnp$ satisfies the following. Let $X, Y,
  Z \subseteq V(G)$ be (not necessarily disjoint) sets of size $x$, $y$, $z$,
  such that $xyzp \geq 200\eps^{-2}n$. Then
  \[
    e_G(X, Y, Z) \leq (1+\eps)xyzp.
  \]
  In particular, if $Y = Z$, then $e_G(X, \binom{Y}{2}) \leq
  (1+\eps)x\binom{y}{2}p$.
\end{lemma}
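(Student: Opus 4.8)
The plan is to fix the triple $(X,Y,Z)$, exhibit $e_G(X,Y,Z)$ as a sum of independent bounded random variables of expectation at most $xyzp$, apply a Chernoff/Bernstein tail bound, and close with a union bound over the at most $2^{3n}$ triples of vertex subsets. Throughout we may assume $0 < \eps \le 1$, which is the only range we use.

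First I would decompose $e_G(X,Y,Z) = \sum_{f} a_f\,\1[f\in E(G)]$, the sum taken over all $3$-element sets $f\subseteq V(G)$, where $a_f\in\{0,1,\dots,6\}$ counts the orderings $(u,v,w)$ of $f$ with $u\in X$, $v\in Y$, $w\in Z$. Since $X,Y,Z$ are fixed, the coefficients $a_f$ are deterministic and the events $\{f\in E(G)\}$ are mutually independent; moreover $\sum_f a_f$ is exactly the number of ordered triples in $X\times Y\times Z$ with pairwise distinct coordinates, hence at most $xyz$. Consequently $e_G(X,Y,Z)$ is a sum of independent random variables taking values in $[0,6]$ with mean $\mu\le xyzp=:\mu_0$. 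Applying Chernoff's inequality \cite[Corollary~2.3]{janson2011random} to this weighted sum (whose summands lie in $[0,6]$), and writing $t:=(1+\eps)\mu_0-\mu\ge\eps\mu_0$,
\[
  \Pr\!\left[e_G(X,Y,Z)\ge(1+\eps)\mu_0\right]\;\le\;\exp\!\left(-\frac{t^2}{12(\mu+t/3)}\right)\;\le\;\exp\!\left(-\frac{\eps^2\mu_0}{20}\right),
\]
where the last step uses $\mu\le\mu_0$ and $\eps\le1$. The hypothesis $xyzp\ge200\eps^{-2}n$ makes the right-hand side at most $e^{-10n}$.

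Since there are at most $(2^n)^3=8^n$ choices of $(X,Y,Z)$, a union bound over all of them satisfying $xyzp\ge200\eps^{-2}n$ (the others carry no constraint) fails with probability at most $8^n e^{-10n}=e^{-\Omega(n)}=o(1)$, proving the first assertion. The ``in particular'' statement follows by specialising $Z=Y$: each unordered pair $\{v,w\}\in\binom{Y}{2}$ with $uvw\in E(G)$ and $u\in X$ corresponds to exactly the two ordered triples $(u,v,w),(u,w,v)\in X\times Y\times Y$, so $e_G(X,\binom{Y}{2})=\tfrac12 e_G(X,Y,Y)$; running the argument above with $\sum_f a_f\le x\cdot y(y-1)=2x\binom{y}{2}$ gives \whp that $e_G(X,Y,Y)\le(1+\eps)\cdot2x\binom{y}{2}p$, and halving yields the claim.

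This argument is essentially routine, which is presumably why the paper omits it; the only two points deserving attention are that the summands $a_f\1[f\in E(G)]$ are weighted (by at most $6$) rather than $0/1$, so one must invoke the bounded-variable form of Chernoff's inequality rather than the plain Bernoulli version, and that the constant $200$ in the hypothesis is exactly what makes the per-triple failure probability $e^{-10n}$ dominate the $8^n$ term in the union bound. I do not foresee any genuine obstacle.
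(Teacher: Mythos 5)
Your proof is correct and is precisely the ``straightforward application of Chernoff's inequality and the union bound'' the paper alludes to when omitting the proof; the extra care you take with the weighted sum (weights in $\{0,\dots,6\}$ arising from the up to $3!$ orderings of each $3$-set contributing when $X,Y,Z$ overlap, forcing the rescaled bounded-variable form of Chernoff and the factor $12$ in your exponent) and the closing arithmetic showing $\eps^2 xyzp/20 \ge 10n > n\ln 8$ are exactly right. Your restriction to $\eps \le 1$ is harmless, as that is the only regime the paper uses; for larger $\eps$ one would simply pass to the sharper bound $\exp\bigl(-xyzp\,\varphi(\eps)\bigr)$ with $\varphi(\eps)=(1+\eps)\log(1+\eps)-\eps$.
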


\subsection{Expansion}

The following lemma captures the fact that expansion of vertices behaves as
expected in a not too sparse subgraph $G$ of the random graph $\Hnp$. Namely, if
a vertex $v$ has the property that for some $W$ the minimum degree of $G[\{v\}
\cup W]$ is at least $\Omega(n^2p)$, then there are at least $\sqrt{n}$ vertices
$w_1 \in W$ and $\Theta(n)$ vertices $w_2 \in W$ for which there is a
$vw_i$-path of length $i$ in $G[\{v\} \cup W]$. It plays an important role both
for proving the Connecting Lemma and finding absorbers.

\begin{lemma}\label{lem:large-second-neighbourhood}
  For every $\gamma > 0$ there exist $\xi, C > 0$ such that w.h.p.\ $\Hnp$
  satisfies the following, provided that $p \geq Cn^{-3/2}\log n$. Let $G
  \subseteq \Hnp$ and $W \subseteq V(G)$ with $\delta_1(G[W]) \geq \gamma
  p \binom{n-1}{2}$. For every $x \in V(G)$ for which $\deg_G(x, W) \geq \gamma
  p \binom{n-1}{2}$, there exist $\cF_x \subseteq (W \setminus \{x\})^4$ and
  $\cP_x \subseteq \binom{W \setminus \{x\}}{2}$, all pairwise disjoint, of size
  $|\cF_x| \geq \xi n$, $|\cP_x| = \sqrt{n}$, and such that
  \stepcounter{propcnt}
  \begin{alphenum}
    \item $xuv \in E(G)$, for every $\{u, v\} \in \cP_x$, and
    \item for every $(w_1,w_2,w_3,w_4) \in \cF_x$ there is some $\{u, v\} \in
      \cP_x$ for which $uw_1w_2, vw_3w_4 \in E(G)$.
  \end{alphenum}
\end{lemma}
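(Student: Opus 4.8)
The strategy is to first pass to a large sub-structure inside $W$ where edge densities behave pseudorandomly, then build the paths of length $1$ and $2$ greedily while keeping everything disjoint. First I would use the minimum-degree hypothesis on $x$: since $\deg_G(x,W)\ge \gamma p\binom{n-1}{2}$, there are at least $\gamma p\binom{n-1}{2}$ pairs $\{u,v\}\subseteq W\setminus\{x\}$ with $xuv\in E(G)$. Call this set of pairs $\mathcal{Q}$. I want to select $\cP_x\subseteq\mathcal{Q}$ of size exactly $\sqrt n$ whose union of endpoints is a set $U$ of at most $2\sqrt n$ vertices, and then, for each ordered pair of (not necessarily distinct) endpoints, find many length-$1$ paths going into the rest of $W$. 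The cleanest way to guarantee this is to pick $\cP_x$ so that its endpoints spread out: greedily choose pairs from $\mathcal{Q}$ one at a time, and since $|\mathcal{Q}|=\Omega(pn^2)$ while each already-chosen vertex kills at most $\deg_G(x,W)/\text{(something)}$ — more simply, use Lemma~\ref{lem:varying-size-sets} to argue that no set $B$ of size $O(\sqrt n)$ can absorb a $\gamma$-fraction of the $x$-pairs, so a matching-like selection of $\sqrt n$ pairs with all $2\sqrt n$ endpoints distinct is possible. (In fact $\cP_x$ being a set of \emph{pairs} — not necessarily vertex-disjoint — already suffices for property (A2); requiring distinct endpoints just simplifies the disjointness bookkeeping for $\cF_x$.)

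Next I would produce $\cF_x$. For a fixed endpoint $u\in U$, the degree condition $\delta_1(G[W])\ge\gamma p\binom{n-1}{2}$ gives $\deg_G(u,W)\ge\gamma p\binom{n-1}{2}$, i.e.\ at least $\gamma p\binom{n-1}{2}$ pairs $\{w_1,w_2\}$ in $W$ with $uw_1w_2\in E(G)$. Symmetrically for $v$. For each pair $\{u,v\}\in\cP_x$ I want to choose a pair $(w_1,w_2)$ from $u$'s neighbourhood-pairs and a pair $(w_3,w_4)$ from $v$'s neighbourhood-pairs, with $w_1,w_2,w_3,w_4$ all distinct and disjoint from $U$ and from the quadruples chosen for other pairs of $\cP_x$. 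Since we only need $|\cF_x|\ge\xi n$, and each pair in $\cP_x$ contributes potentially many quadruples, I would process the $\sqrt n$ pairs of $\cP_x$ in turn; at each step the vertices already used (in $U$ or in previously chosen quadruples) number at most $2\sqrt n + 4\xi n = o(n)$, and removing them from $W$ destroys only $o(n^2 p)$ of $u$'s (resp.\ $v$'s) neighbour-pairs by Lemma~\ref{lem:general-edge-concentration} (with $X=\{u\}$, $Y=Z=$ the small bad set: that set has size $o(n)$, so the count of bad pairs through it is $O(n\cdot o(n)\cdot p)=o(n^2p)$, which is $o(\gamma p\binom{n-1}{2})$). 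Hence $u$ still has $\Omega(p n^2)$ good neighbour-pairs avoiding everything used, and likewise $v$; we can pick $\Omega(pn^2/\sqrt n)=\Omega(n^{3/2})$ disjoint quadruples for this single pair $\{u,v\}$ if we wanted, but we only take $\Theta(\xi n /\sqrt n)=\Theta(\sqrt n)$ of them — comfortably within budget — so over all $\sqrt n$ pairs we accumulate $\Theta(\xi n)$ quadruples total. To keep the quadruples \emph{between} different pairs disjoint too, just continue the same "delete used vertices, re-apply Lemma~\ref{lem:general-edge-concentration}" bookkeeping across the outer loop over $\cP_x$; the total number of vertices ever used stays $O(\sqrt n + \xi n)=o(n)$ as long as $\xi$ is a small enough constant, so the error terms never swamp the $\gamma p\binom{n-1}{2}$ degree.

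The one genuine subtlety — and the step I expect to be the main obstacle — is making the two layers of disjointness play together cleanly: the endpoints $U$ of $\cP_x$, the vertex $x$ itself, and the $\cF_x$-quadruples must all be pairwise disjoint, and the quadruples must be disjoint from one another. This is handled by the greedy deletion argument above, but one has to be careful that the density lemmas (Lemma~\ref{lem:general-edge-concentration} for the quadruple counts, Lemma~\ref{lem:varying-size-sets} for the initial spreading of $\cP_x$) are applied with sets of the right sizes, and that the hypothesis $p\ge Cn^{-3/2}\log n$ is exactly what makes $\gamma p\binom{n-1}{2}=\Omega(n^{1/2}\log n)$ large enough that a $\sqrt n$-sized set of pairs (and $o(n)$ deleted vertices) is negligible against it; this is where the $n^{-3/2}$ threshold in the statement enters. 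I would fix the constants in the order: $\gamma$ given, then choose $\xi=\xi(\gamma)$ small, then $C=C(\gamma)$ large, and verify all the $o(\cdot)$ estimates hold for $n$ large. With that bookkeeping in place, properties (A1) and (A2) hold by construction, and the sizes $|\cP_x|=\sqrt n$, $|\cF_x|\ge\xi n$ are met.
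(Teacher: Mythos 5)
Your high-level strategy (greedily build $\cP_x$ first, then greedily build $\cF_x$ one pair at a time while tracking used vertices) is in the right spirit, but the argument has several genuine problems in the regime $p=\Theta(n^{-3/2}\log n)$, which is the hard case the paper must handle.

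First, both lemma invocations fail. For the selection of $\cP_x$, Lemma~\ref{lem:varying-size-sets} with $A=\{x\}$ requires $|B|\le C|A|=C$, a \emph{constant}; it cannot be applied with $B$ of size $\Theta(\sqrt n)$. Swapping the roles of $A$ and $B$ only yields a bound $e_G(A,B,V)\le \gamma |A|\,p\binom{n-1}{2}$ with $|A|=\Theta(\sqrt n)$, which is larger than $\deg_G(x,W)$ by a $\sqrt n$ factor and hence says nothing. For the construction of $\cF_x$, Lemma~\ref{lem:general-edge-concentration} needs $|X||Y||Z|p\ge 200\eps^{-2}n$. With $X=\{u\}$ and $|Y|,|Z|\le n$, the left side is at most $n^2p=\Theta(\sqrt n\log n)=o(n)$, so the hypothesis is never satisfied. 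Your displayed estimate $\Omega(pn^2/\sqrt n)=\Omega(n^{3/2})$ is also off by a factor of $n$: at $p=\Theta(n^{-3/2}\log n)$, $pn^2/\sqrt n=\Theta(\log n)$, nowhere near the $\Theta(\sqrt n)$ quadruples per pair your budget requires.

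The deeper obstruction is structural. A vertex $u\in W$ only has degree $\Theta(n^2p)=\Theta(\sqrt n\log n)$, which is \emph{below} the scale at which any of the edge-concentration lemmas in the paper give useful upper bounds for a single vertex. Even applying Lemma~\ref{lem:1-edge-concentration}~(\textit{ii}) to a superset of $\{u\}$ of the minimal admissible size $\eps^{-3}\log n/(np)=\Theta(\sqrt n/C)$, against a forbidden set $Y$ of size $\Theta(\xi n)$, gives an upper bound of order $\Theta(\xi n\log n)$, which swamps $u$'s total degree $\Theta(\sqrt n\log n)$. So a per-pair greedy argument cannot certify that $u$ retains $\Omega(n^2p)$ good neighbour-pairs after deletion; an adversarial-looking $Y$ could in principle contain all of them. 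The paper circumvents exactly this by running a maximality argument and \emph{aggregating} the lower bound over $\Omega(\sqrt n)$ endpoints of $\cP_x$ simultaneously (the set $X\setminus X'$ in the paper's proof), at which point both sides of the inequality live at the scale $\sqrt n\cdot n^2 p$ and Lemma~\ref{lem:1-edge-concentration}~(\textit{ii}) becomes applicable. To fix your proof you would need to replace the vertex-by-vertex bookkeeping with such an aggregated maximality/double-counting step, and replace both misapplied lemmas by Lemma~\ref{lem:1-edge-concentration}.
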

\begin{proof}
  Suppose $\Hnp$ has the properties of Lemma~\ref{lem:1-edge-concentration} and
  Lemma~\ref{lem:general-edge-concentration} for, say, $\eps = 10^{-3}$. Let
  $\cP_x \subseteq \binom{W \setminus \{x\}}{2}$ be a maximal set of disjoint
  pairs which close an edge together with $x$, and suppose $|\cP_x| < \sqrt{n}$.
  Let $X$ be a superset of all the vertices that belong to these pairs of size
  precisely $2\sqrt{n}$. Then, in particular, there are no edges $xuv \in E(G)$
  with $u, v \in W \setminus X$. Note that $2\sqrt{n} \geq \eps^{-3}\log n/(np)$
  by our choice of $p$ for $C > 0$ large enough. From the minimum degree
  assumption on the one hand, and the property of
  Lemma~\ref{lem:1-edge-concentration}~\ref{lem:1-edge-concentration-p2}
  (considering a superset of $\{x\}$ of size $\eps^{-3} \log{n}/(np)$ if
  necessary) on the other, we have
  \[
    \gamma p\binom{n-1}{2} \leq e_G(x, X, V(G)) \leq (1+\eps) \cdot
    \max\{\eps^{-3}\log n/(np), 1\} \cdot 2\sqrt{n} \cdot np.
  \]
  This is a contradiction for $C$ large enough.

  Similarly, after fixing $\cP_x$ and $X$, let $\cF_x$ be a maximal set of
  disjoint $4$-tuples which satisfy the second property of the lemma, and
  suppose $|\cF_x| < \xi n$. Let $Y$ be the vertices that belong to these
  $4$-tuples. We first show there is a set $\cP_x' \subseteq \cP_x$ of size
  $|\cP_x'| > |\cP_x|/2$ such that for every $\{u,v\} \in \cP_x'$ there exist
  distinct (also from other such vertices) $w_1, w_2 \in W \setminus (X \cup Y
  \cup \{x\})$ which close an edge in $G$ with $u$. Let $\cP_x'$ be the largest
  such set and let $Q$ denote the union of $X \cup Y \cup \{x\}$ and all such
  $w_1, w_2$ (the ones that $u \in \cP_x'$ closes an edge with). Then, with $X'$
  denoting the vertices in $\cP_x'$, we have
  \[
    e_G(X \setminus X', Q, W) \geq |X \setminus X'|\gamma n^2p/3.
  \]
  On the other hand, by the property of
  Lemma~\ref{lem:1-edge-concentration}~\ref{lem:1-edge-concentration-p2},
  assuming $|X \setminus X'| \geq \sqrt{n} \geq \eps^{-3}\log n/(np)$ and as
  $|Q| = 6\xi n$ (taking a superset if necessary), we have
  \[
    e_G(X \setminus X', Q, W) \leq e_G(X \setminus X', Q, V(G)) \leq (1+\eps) |X
    \setminus X'| 6\xi n^2p,
  \]
  leading to a contradiction for $\xi$ small enough. We just need to show that
  there is $\{u,v\} \in \cP_x'$ and $w_3, w_4 \in W \setminus Q$ which
  comprise an edge in $G$ with $v$. Indeed, exactly the same computation as
  above establishes this, which completes the proof.
\end{proof}

\subsection{Degree inheritance properties}

We first state a slightly strengthened version of
\cite[Lemma~5.5]{ferber2022dirac}. The strengthening comes in the bound on $p$,
which is stated to match the one from Theorem~\ref{thm:main-theorem} (and in
fact, most of the statements in this paper). For the case $d = 2$ we require $p
= \Theta(\log n/n)$, in contrast to $\Omega(n^{-2}\log^3 n)$, and here the
desired property is in fact easily obtained through Chernoff's inequality and
the union bound due to independence. In case $d = 1$ the former requirement of
$p \geq n^{-2}\log^3 n$ is actually necessary for this particular proof.

\begin{lemma}\label{lem:random-set-degree-inh}
  For every $\gamma, \mu, \xi > 0$, there is a $C$ such that w.h.p.\ $\Hnp$
  satisfies the following, provided that $p \geq C\log n \cdot \max\{ n^{-3/2},
  n^{-3+d} \}$. Let $G \subseteq \Hnp$ and let $W \subseteq V(G)$ be a uniformly
  random set of size at least $\xi n$. Then w.h.p.\ every $d$-set $S \subseteq
  V(G)$ that satisfies $\deg_G(S) \geq (\mu+\gamma)p\binom{|V(G)|-d}{3-d}$ also
  satisfies $\deg_G(S, W) \geq (\mu+\gamma/2)p\binom{|W|-d}{3-d}$.
\end{lemma}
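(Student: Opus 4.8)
The plan is to carefully unwind the quantifiers first. After conditioning on a typical outcome of $\Hnp$, the graph $G \subseteq \Hnp$ is \emph{fixed} and only afterwards is $W$ sampled; in particular, for a fixed $d$-set $S$ the object whose $W$-degree we must control — the codegree neighbourhood of $S$ if $d=2$, the link graph of $S$ if $d=1$ — is itself fixed before $W$. So it suffices to show that for every fixed $G$ and every fixed $d$-set $S$ with $\deg_G(S) \ge (\mu+\gamma)p\binom{N-d}{3-d}$, where $N := |V(G)|$, the probability over the random choice of $W$ that $\deg_G(S,W) < (\mu+\gamma/2)p\binom{|W|-d}{3-d}$ is at most $n^{-2d}$. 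A union bound over the at most $n^d$ choices of $S$ then proves the statement for this $G$, and since the estimate is uniform in $G$ we are done — crucially, we never union bound over the (exponentially many) graphs $G$. We condition on $|W| = w$ for a fixed $w \ge \xi n$, paying an innocuous factor $n$ for a union over $w$; note $w/N \ge \xi$ since $W \subseteq V(G)$, and that replacing $W$ by $W \setminus S$ perturbs every quantity only by $O(1)$ terms, easily absorbed by the gap between $\gamma$ and $\gamma/2$.

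For $d=2$ no property of $\Hnp$ is used. Here $\deg_G(\{x,y\}, W) = |N_G(x,y) \cap W|$ with $N_G(x,y)$ a fixed set of size at least $(\mu+\gamma)p(N-2)$, so $|N_G(x,y) \cap W|$ is a hypergeometric random variable with mean at least $\tfrac{w}{N}(\mu+\gamma)p(N-2)$, exceeding the target $(\mu+\gamma/2)p(w-2)$ by a multiplicative factor bounded below by $\tfrac{\mu+\gamma}{\mu+\gamma/2} - o(1) = 1 + \Omega(1)$ (the hidden constant depending only on $\mu,\gamma$). A Chernoff bound for the hypergeometric distribution then bounds the probability of falling below the target by $\exp(-\Omega(pw)) \le \exp(-\Omega(C\log n)) \le n^{-3}$ for $C$ large, since $pw \ge \xi p n \ge \xi C\log n$; a union bound over the $\binom{N}{2}$ pairs completes this case.

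The case $d = 1$ is the heart of the argument and is where the lower bound on $p$ enters. Now $\deg_G(x) = e(L_x)$ and $\deg_G(x,W) = e(L_x[W])$, where $L_x$ is the link graph of $x$ in $G$, a fixed subgraph of the link $L_x^{\Hnp}$ of $x$ in $\Hnp$. A naive vertex-exposure martingale for $e(L_x[W])$ is hopeless near the lower limit of $p$: there $e(L_x) = \Theta(p n^2)$ is only polylogarithmic, yet $n$ exposure steps accumulate far too much variance. Instead I would add to the list of typical properties of $\Hnp$ the bound $\Delta_2(\Hnp) \le \Delta_0 := \max\{2np,\, C_0 \log n/\log\log n\}$ for a suitable $C_0$ — a routine Chernoff-and-union estimate over the $\binom{n}{2}$ codegrees, each distributed as $\mathrm{Bin}(N-2,p)$ — so that $\Delta(L_x) \le \Delta_0$. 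By Vizing's theorem, $E(L_x)$ splits into at most $\Delta_0+1$ matchings; those with fewer than $C_1 \log n$ edges carry $O(\Delta_0 \log n)$ edges in total, which is $o(e(L_x))$ in both regimes of $\Delta_0$ precisely because $p \ge n^{-2}\log^3 n$ forces $p n^2 = \omega(\Delta_0 \log n)$, so we simply discard them. On each surviving matching $M_i$ the edges are vertex-disjoint, so the indicators $\{\1[e \subseteq W]\}_{e \in M_i}$ are negatively associated and $e(M_i[W])$ concentrates about its mean $\tfrac{w(w-1)}{N(N-1)}|M_i| \ge (1-o(1))\xi^2 |M_i|$: it is at least $(1-\delta)$ times this mean except with probability $\exp(-\Omega(\delta^2 \xi^2 |M_i|)) \le n^{-\Omega(\delta^2 \xi^2 C_1)} \le n^{-4}$ once $C_1$ is large. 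Union bounding over the $O(n)$ matchings and then the $N$ vertices $x$, we obtain that \whp over $W$, for all $x$ simultaneously $e(L_x[W])$ is at least $(1-\delta)\tfrac{w(w-1)}{N(N-1)}$ times the number of edges in the surviving matchings, hence at least $(1-\delta)(1-o(1))\tfrac{w(w-1)}{N(N-1)}e(L_x)$. Since $\tfrac{w(w-1)}{N(N-1)}\binom{N-1}{2} = (1-o(1))\binom{w-1}{2}$ and $e(L_x) \ge (\mu+\gamma)p\binom{N-1}{2}$, choosing $\delta$ small in terms of $\mu,\gamma$ makes this at least $(\mu+\gamma/2)p\binom{w-1}{2}$, as required.

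The step I expect to be the main obstacle is exactly this last concentration: obtaining an exponentially small failure probability for $e(L_x[W])$ when the quantity itself is only $\polylog n$, so that the union bound over all $x$ survives. It is overcome by exploiting that the link of $\Hnp$ has maximum degree only $O(\max\{np, \log n/\log\log n\})$ — small enough that $\Delta_0 \log n$ stays below $p n^2$ — which lets one split the count along few matchings, each of which carries a clean Chernoff bound; this is precisely why the hypothesis $p \ge n^{-2}\log^3 n$ (implied by the density stated in the lemma) is needed in the case $d=1$.
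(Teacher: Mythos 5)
Your $d=2$ argument — treating $\deg_G(\{x,y\},W)=|N_G(x,y)\cap W|$ as a hypergeometric variable with mean $\Omega(np)$ and applying Chernoff plus a union bound over pairs — is exactly what the paper does. The interesting divergence is for $d=1$: the paper simply cites \cite[Lemma~5.5]{ferber2022dirac} after noting that $p\geq Cn^{-3/2}\log n\geq n^{-2}\log^3 n$, while you supply a self-contained proof. Your mechanism (a Chernoff/union bound giving $\Delta_2(\Hnp)\leq\Delta_0=\max\{2np,\,C_0\log n/\log\log n\}$, a Vizing decomposition of the link graph $L_x$ into at most $\Delta_0+1$ matchings, discarding matchings below $C_1\log n$ edges because $pn^2=\omega(\Delta_0\log n)$, and then Chernoff for the negatively-associated indicators $\1[e\subseteq W]$ within each surviving matching) is in all likelihood the very argument underlying the cited lemma, so the substance agrees; what you add is making the mechanism explicit, in particular where the hypothesis on $p$ is consumed, which the paper leaves to the reference. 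A small stylistic note: when you say $e(L_x)=\Theta(pn^2)$ ``is only polylogarithmic'' near the lower limit, that is true for the general regime $p\geq n^{-2}\log^3 n$ you are targeting, but in the paper's actual range $p\geq Cn^{-3/2}\log n$ one already has $e(L_x)=\Theta(\sqrt{n}\log n)$; you are simply proving something slightly stronger than strictly needed here, which does no harm. One technical remark worth surfacing in a written-up version: the negative association of $\{\1[e\subseteq W]\}_{e\in M_i}$ rests on the standard facts that the vertex-membership indicators of a uniformly random fixed-size set are NA, and that monotone functions of disjoint blocks of NA variables remain NA; stating these references would tighten the argument.
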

\begin{proof}
  For $d = 2$, the codegree of a pair of vertices into $W$ follows a
  hypergeometric distribution with mean $\Omega(np)$, for which Chernoff's
  inequality applies (see, e.g.~\cite[Theorem~2.10]{janson2011random}). The
  statement thus follows directly from it and the union bound over all pairs of
  vertices. In case $d = 1$ the assertion holds by
  \cite[Lemma~5.5]{ferber2022dirac}, since $p \geq Cn^{-3/2}\log n \geq
  n^{-2}\log^3 n$.
\end{proof}

The next lemma allows us to start with a graph in which almost all $d$-sets have
at least some degree and pick a random subgraph of it such that in it, with positive
probability, \emph{all} $d$-sets have at least some (slightly smaller) degree.

\begin{lemma}[{\cite[Lemma~3.4]{ferber2022dirac}}]
  \label{lem:random-set-almost-all-degree-inh}
  There is a $c > 0$ such that the following holds. Let $d \in \{1,2\}$ and
  $\gamma,\delta,\mu > 0$. Let $G$ be an $n$-vertex $3$-graph in which all but
  $\delta\binom{n}{d}$ of the $d$-sets have degree at least
  $(\mu+\gamma)\binom{n-d}{3-d}$. Let $S$ be a uniformly random subset of $s
  \geq 2d$ vertices of $G$. Then with probability at least $1 - \binom{s}{d}
  (\delta + e^{-c \gamma^2 s})$, the random induced subgraph $G[S]$ has minimum
  $d$-degree at least $(\mu+\gamma/2)\binom{s-d}{3-d}$.
\end{lemma}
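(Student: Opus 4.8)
The plan is to fix a $d$-set $T \subseteq V(G)$ and estimate the probability of the event that $T \subseteq S$ while $\deg_{G[S]}(T) < (\mu+\gamma/2)\binom{s-d}{3-d}$; summing these estimates over all $\binom{n}{d}$ choices of $T$, weighted by $\Pr[T \subseteq S] = \binom{s}{d}/\binom{n}{d}$, bounds the expected number of ``deficient'' $d$-subsets of $S$, and Markov's inequality then finishes the job. I would split the $d$-sets $T$ into the ``sparse'' ones, with $\deg_G(T) < (\mu+\gamma)\binom{n-d}{3-d}$, of which there are at most $\delta\binom{n}{d}$ by hypothesis, and the ``dense'' rest. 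For a sparse $T$ I would use only the trivial bound $\Pr[T \subseteq S \text{ and } \deg_{G[S]}(T) \text{ deficient}] \le \Pr[T \subseteq S] = \binom{s}{d}/\binom{n}{d}$; summed over the at most $\delta\binom{n}{d}$ sparse sets, this contributes at most $\delta\binom{s}{d}$ to the expectation.

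The substance is in the dense $T$, for which I would condition on $T \subseteq S$. Writing $S = T \cup S'$, the set $S'$ is then a uniformly random $(s-d)$-subset of the $(n-d)$-set $V(G) \setminus T$, and $\deg_{G[S]}(T)$ equals the number of edges of the ``link'' $(3-d)$-graph $J_T := \{R \in \binom{V(G)\setminus T}{3-d} : T \cup R \in E(G)\}$ that fall inside $S'$. Since $J_T$ has at least $(\mu+\gamma)\binom{n-d}{3-d}$ edges, linearity of expectation gives $\E[\deg_{G[S]}(T) \mid T \subseteq S] \ge (\mu+\gamma)\binom{s-d}{3-d}$, so it suffices to prove a lower-tail bound $\Pr[\deg_{G[S]}(T) < (\mu+\gamma/2)\binom{s-d}{3-d} \mid T \subseteq S] \le e^{-c\gamma^2 s}$ for an absolute $c > 0$. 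For $d = 2$ this is immediate: $\deg_{G[S]}(\{u,v\})$ is then $|N_G(u,v) \cap S'|$, a hypergeometric random variable with mean at least $(\mu+\gamma)(s-2)$, and Hoeffding's inequality for sampling without replacement yields the tail $e^{-\Omega(\gamma^2(s-2))}$. For $d = 1$ the quantity $\deg_{G[S]}(v)$ is the edge count of $J_v$ restricted to $S'$, a degree-two ``U-statistic'' of the random set $S'$ rather than a sum of bounded independent terms; here I would expose the vertices of $S'$ one at a time and apply the Azuma--Hoeffding inequality to the resulting Doob martingale, whose increments are bounded by $O(|S'|) = O(s)$ (re-selecting one revealed vertex changes the edge count by at most the degree of that vertex, which is at most $s-1$). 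Taking the target deviation $t = \tfrac{\gamma}{2}\binom{s-1}{2} = \Omega(\gamma s^2)$ makes the exponent $-\Omega(t^2/s^3) = -\Omega(\gamma^2 s)$, as needed. Summing the conditional bound over all at most $\binom{n}{d}$ dense $T$, weighted by $\Pr[T \subseteq S]$, contributes at most $\binom{s}{d}e^{-c\gamma^2 s}$ to the expectation.

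Altogether the expected number of $d$-subsets $T$ of $S$ with $\deg_{G[S]}(T) < (\mu+\gamma/2)\binom{s-d}{3-d}$ is at most $\binom{s}{d}(\delta + e^{-c\gamma^2 s})$, and Markov's inequality gives the claimed bound (with $c$ taken to be the minimum of the constants arising in the two cases). The step I expect to require the most care is the lower-tail concentration for $d = 1$: since $e(J_v[S'])$ is not a sum of independent or even negatively associated bounded variables, one cannot simply invoke a Chernoff bound, and one must set up the martingale (or bounded-differences) framework for sampling without replacement carefully and verify that the increments really are $O(s)$ — although only a crude bound is needed, as we only require an exponent of order $\gamma^2 s$. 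Everything else, namely the sparse/dense dichotomy, the hypergeometric concentration for $d = 2$, and the final first-moment computation, is routine.
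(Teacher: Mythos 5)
The paper does not prove this lemma at all; it is imported verbatim from Ferber and Kwan \cite{ferber2022dirac} (their Lemma~3.4), so there is no in-paper argument to compare against. Your proposal, however, is a correct and self-contained proof. The first-moment-plus-Markov framework is the right one: the number of deficient $d$-subsets of $S$ has expectation at most $\binom{s}{d}\bigl(\delta + e^{-c\gamma^2 s}\bigr)$, split between the at most $\delta\binom{n}{d}$ sparse $d$-sets (each contributing $\Pr[T\subseteq S]=\binom{s}{d}/\binom{n}{d}$) and the dense ones. For the dense ones your conditional computation is correct: given $T\subseteq S$, the set $S'=S\setminus T$ is a uniform $(s-d)$-subset of $V(G)\setminus T$, and the conditional mean of $\deg_{G[S]}(T)$ is at least $(\mu+\gamma)\binom{s-d}{3-d}$. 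For $d=2$ this is hypergeometric and Hoeffding gives $e^{-\Omega(\gamma^2 s)}$. For $d=1$, the vertex-exposure Doob martingale on $S'$ does indeed have increments bounded by $s-2$ (swap-coupling for sampling without replacement: the two conditioned copies of $S'$ can be coupled to differ in at most one vertex), so Azuma with deviation $t=\tfrac{\gamma}{2}\binom{s-1}{2}=\Theta(\gamma s^2)$ and variance proxy $O(s^3)$ yields exponent $-\Omega(\gamma^2 s)$. The resulting constant $c$ is absolute, as required. The only cosmetic point worth flagging is the range $s\ge 2d$: for small $s$ the stated failure probability can exceed $1$ and the lemma is vacuous, which spares you any worry about the Azuma estimate degenerating there.
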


The last two lemmas establish that in a subgraph of the random hypergraph $\Hnp$
with sufficiently large minimum degree, after an adversary removes $\lambda n$
vertices, for some tiny $\lambda > 0$, almost all $d$-sets still keep a
significant portion of their original degree in the resulting graph.

\begin{lemma}\label{lem:robust-degree-inh}
  For every $\gamma, \mu > 0$ there exist $\lambda, C > 0$ such that w.h.p.\
  $\Hnp$ satisfies the following, provided that $p \geq Cn^{-3/2}\log n$. Let $G
  \subseteq \Hnp$ with $\delta_1(G) \geq (\mu+\gamma)p\binom{n-1}{2}$ and let $S
  \subseteq V(G)$ be of size $|S| \leq \lambda n$. Then there exists a set $T$
  of size at most $\sqrt{n}$ such that for $U := V(G) \setminus (S \cup T)$, the
  graph $G[U]$ is of minimum degree at least $(\mu+\gamma/4)p\binom{|U|-1}{2}$.
  Furthermore, if for some $x, y \in V(G) \setminus S$ we have $\deg_G(x, V(G)
  \setminus S), \deg_G(y, V(G) \setminus S) \geq (\mu+\gamma/2)p\binom{n-1}{2}$,
  then $T$ can be chosen to avoid $x, y$.
\end{lemma}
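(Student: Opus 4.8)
The plan is to build the small set $T$ greedily as the set of "bad" vertices whose degree has been destroyed by the adversarial removal of $S$, and then argue via the edge-distribution lemmas that this set cannot be large. Concretely, let $S' \supseteq S$ be obtained from $S$ by repeatedly adding any vertex $v$ with $\deg_G(v, V(G)\setminus S') < (\mu+\gamma/4)p\binom{n-1}{2}$ — equivalently, a vertex that has lost at least $(\gamma - \gamma/4 - o(1))p\binom{n-1}{2} \ge (\gamma/2) p\binom{n-1}{2}$ of its degree to vertices already in $S'$. Set $T := S' \setminus S$ and $U := V(G)\setminus S'$; by construction every vertex of $U$ has degree at least $(\mu+\gamma/4)p\binom{n-1}{2} \ge (\mu+\gamma/4)p\binom{|U|-1}{2}$ in $G[U]$ (using $|U| \le n$, so $\binom{|U|-1}{2}\le\binom{n-1}{2}$; a negligible adjustment of constants absorbs this). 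So the only thing to prove is $|T| \le \sqrt n$.

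The key step is bounding $|T|$, for which I would invoke Lemma~\ref{lem:varying-size-sets}. Write $B := S \cup T$, $a := |B| \le |S| + |T|$. Each vertex of $T$ contributes at least $(\gamma/2)p\binom{n-1}{2}$ edges that meet $B$ in at least two vertices — more precisely, each such vertex $v\in T$ has at least $(\gamma/2)p\binom{n-1}{2}$ pairs $\{u,w\}\subseteq B$ with $vuw\in E(G)$. Summing over $v\in T$, we get at least $|T|\cdot(\gamma/2)p\binom{n-1}{2}$ triples all of whose three vertices lie in $B$; in particular $e_G(B, B, V(G)) \ge |T|\cdot(\gamma/2)p\binom{n-1}{2}$. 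Now suppose for contradiction that $|T| > \sqrt n$. Choosing $\lambda$ small (with $\lambda n \le \sqrt n$ for $n$ large, so actually $|S|\le\lambda n\le |T|$ once $|T|>\sqrt n$, giving $a \le 2|T|$), we apply Lemma~\ref{lem:varying-size-sets} with the roles $A \mapsto T$, $B \mapsto B$ (sizes $|T|$ and $a \le 2|T|$, so the hypothesis $b \le Ca$ holds with $C = 2$) and the parameter $\gamma/2$ in place of $\gamma$: this lemma (for suitable $\lambda$) forbids exactly the bound $e_G(T, B, V(G)) \ge (\gamma/2)|T|p\binom{n-1}{2}$, as long as $|T| \le \lambda n$. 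The mild issue that $|T|$ could a priori exceed $\lambda n$ is handled by running the greedy process only until $T$ first reaches size $\lambda n/2$ (or even $\sqrt n$): one can always stop early, and Lemma~\ref{lem:varying-size-sets} applied at the stopping point already yields the contradiction, so in fact $|T|$ never exceeds $\sqrt n$.

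For the "furthermore" clause, I would simply observe that $x$ and $y$ are never added to $T$ by the greedy process. Indeed, the hypothesis $\deg_G(x, V(G)\setminus S) \ge (\mu+\gamma/2)p\binom{n-1}{2}$ says $x$ loses at most its full degree minus this, i.e.\ it never loses enough \emph{from $S$ alone} to be flagged; and since the total loss of any vertex that ever gets added to $T$ is already accounted for against $B = S\cup T$, one checks that a vertex with degree $\ge (\mu+\gamma/2)p\binom{n-1}{2}$ into $V(G)\setminus S$ would need to lose $\ge (\gamma/4)p\binom{n-1}{2}$ edges specifically to vertices of $T$ before being added — but summing these losses over such a would-be-added $x$ (and $y$) just adds at most two more vertices' worth of edges into $B$, changing the constant in the displayed bound by a negligible amount, so the same contradiction applies. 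Alternatively, and more cleanly, one runs the greedy process as above to get $T_0$ with $|T_0|\le\sqrt n/2$, then simply sets $T := T_0 \setminus \{x,y\}$ and notes that removing $x,y$ from the removed set only \emph{increases} degrees, so $G[U]$ with $U = V(G)\setminus(S\cup T)$ still has the required minimum degree (here one uses the hypothesis on $x,y$ only to guarantee $x,y$ themselves have large enough degree in $G[U]$, which they do since they had large degree into $V(G)\setminus S \supseteq U$).

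The main obstacle is the circularity between "$|T|$ is small" and "Lemma~\ref{lem:varying-size-sets} applies" (which needs $|T| \le \lambda n$): this is resolved by the standard trick of stopping the greedy process early, at a threshold well below $\lambda n$, and deriving the contradiction there. Everything else is bookkeeping with constants, in particular tracking that $\binom{|U|-1}{2}$ versus $\binom{n-1}{2}$ and the $o(1)$ slack between $\gamma/4$ and $\gamma$ leave enough room.
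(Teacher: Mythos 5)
The core step in your argument—invoking Lemma~\ref{lem:varying-size-sets} with $A \mapsto T$ and $B \mapsto S\cup T$—does not go through, for two reasons. First, that lemma requires $A$ and $B$ to be \emph{disjoint}, whereas $T \subseteq S\cup T$. Second, and more fatally, its hypothesis $b \le C a$ cannot be arranged here: $|S|$ may genuinely be as large as $\lambda n$ for a fixed constant $\lambda$, while you are forced to keep $|T|\le\sqrt n$, so the ratio $|S\cup T|/|T|$ is of order $\lambda\sqrt n$ and is unbounded. Your claim that ``$\lambda n \le \sqrt n$ for $n$ large'' is simply false for any constant $\lambda>0$; it would require $\lambda$ to shrink with $n$, which the lemma does not allow. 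The paper sidesteps this by using Lemma~\ref{lem:1-edge-concentration}~\ref{lem:1-edge-concentration-p2} instead: it stops the greedy process at $|T| = \eps^{-3}\log n/(np)$ (still $<\sqrt n$ by the assumption on $p$) and compares the forced lower bound $e_G(T, S\cup T, V(G)) \ge (\gamma/8)|T|n^2p$ against the upper bound $(1+\eps)|T|\,|S\cup T|\,np \le 2(1+\eps)\lambda|T|n^2p$, which holds with no restriction on the ratio of the two set sizes; choosing $\lambda$ small yields the contradiction. This is the key technical input you are missing—your bookkeeping is otherwise similar (greedy construction of $T$, stop early, contradiction), but the correct edge-distribution lemma to apply is Lemma~\ref{lem:1-edge-concentration}, not Lemma~\ref{lem:varying-size-sets}.

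The treatment of the ``furthermore'' clause also has a gap. Your cleaner alternative, running the process to get $T_0$ and then setting $T := T_0 \setminus \{x,y\}$, does not establish that $x,y$ have high degree in $G[U]$: the hypothesis gives a lower bound on $\deg_G(x,V(G)\setminus S)$, but $U = V(G)\setminus(S\cup T)$ is a strict subset of $V(G)\setminus S$, and nothing yet prevents all of $x$'s edges from going into $T$. This is exactly the non-trivial content that needs proof. The paper handles it by running the process with $x,y$ explicitly excluded and then noting that if $\deg_G(x, U)$ were too small, one would need $e_G(x, T, V(G)\setminus S) \ge (\gamma/4)p\binom{n-1}{2}$; this contradicts Lemma~\ref{lem:1-edge-concentration}~\ref{lem:1-edge-concentration-p1} applied to the singleton $\{x\}$ and the small set $T$, using the lower bound on $p$. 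You would need an argument of this sort rather than a containment-of-sets observation.
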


\begin{proof}
  Initially, let $T := \varnothing$. As long as there exists $v \in V(G)
  \setminus (S \cup T)$ and $v \neq x, y$ with
  \[
    \deg_G(v, V(G) \setminus (S \cup T)) \leq (\mu+\gamma/2)p\binom{n-1}{2},
  \]
  add such a vertex to $T$. Stop this process at the first point in time when
  $|T| = \eps^{-3}\log n/(np)$, for some small enough $\eps > 0$. Note that by
  the assumption on $p$, it also holds that $|T| < \sqrt{n}$. We then have
  \[
    e_G(T, S \cup T, V(G)) \geq |T| \cdot (\gamma/2)p\binom{n-1}{2} \geq
    (\gamma/8)|T|n^2p.
  \]
  On the other hand, as $\Hnp$ w.h.p.\ satisfies the conclusion of
  Lemma~\ref{lem:1-edge-concentration}~\ref{lem:1-edge-concentration-p2},
  \[
    e_G(T, S \cup T, V(G)) \leq (1+\eps)|T||S \cup T|np \leq (1+\eps)2\lambda
    |T| n^2p,
  \]
  which is a contradiction with the former, for $\lambda > 0$ small enough.

  Consider now $x$ and its degree into $V(G) \setminus (S \cup T)$. If it does
  not satisfy the bound promised by the lemma, this means
  \[
    \deg_G(x, T, V(G) \setminus S) \geq (\gamma/4)p\binom{n-1}{2} \geq
    (\gamma/16)n^2p.
  \]
  On the other hand, by the property of
  Lemma~\ref{lem:1-edge-concentration}~\ref{lem:1-edge-concentration-p1},
  \[
    e_G(x, T, V(G) \setminus S) \leq e_G(x, T, V(G)) \leq \eps^{-4}|T|\log n
    \leq \eps^{-7}\log^2 n/(np).
  \]
  This is a contradiction with the former for $C > 0$ large enough as $n^3p^2
  \geq C^2\log^2 n$ by the assumption on $p$.
\end{proof}

\begin{lemma}\label{lem:robust-codegree-inh}
  For every $\gamma, \mu > 0$, there exist $\lambda, C > 0$ such that for every
  $p \geq C\log n/n$ w.h.p.\ $\Hnp$ satisfies the following. Let $G \subseteq
  \Hnp$ with $\delta_2(G) \geq (\mu+\gamma)p(n-2)$. Then, for every $S \subseteq
  V(G)$ with $|S| \leq \lambda n$, all but at most $10^9\log n/p$ pairs of
  vertices $u, v \in V(G) \setminus S$ have $\deg_{G-S}(u,v) \geq
  (\mu+\gamma/2)p(n-|S|-2)$.
\end{lemma}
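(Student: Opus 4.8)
The statement to prove is Lemma~\ref{lem:robust-codegree-inh}, which is the codegree analogue of the process argument used for Lemma~\ref{lem:robust-degree-inh}. I would prove it by a greedy deletion argument on \emph{pairs}. Fix $G\subseteq\Hnp$ with $\delta_2(G)\geq(\mu+\gamma)p(n-2)$ and an adversary set $S$ with $|S|\leq\lambda n$. Call a pair $\{u,v\}\in\binom{V(G)\setminus S}{2}$ \emph{bad} if $\deg_{G-S}(u,v)<(\mu+\gamma/2)p(n-|S|-2)$; since $|S|\leq\lambda n$ we have $n-|S|-2\geq(1-2\lambda)(n-2)$, so for a bad pair the edges of $G$ through $\{u,v\}$ that are destroyed — i.e.\ those using a vertex of $S$, or (in the relevant accounting) a vertex outside the surviving set — number at least $(\gamma/2-O(\lambda))p(n-2)\geq(\gamma/4)pn$, say, once $\lambda$ is small enough. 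The goal is to bound the number $\cB$ of bad pairs by $10^9\log n/p$.

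**Key steps.** Suppose for contradiction that $|\cB|>10^9\log n/p$, and pass to a subcollection $\cP\subseteq\cB$ of exactly $\lceil 10^9\log n/p\rceil$ bad pairs (so $|\cP|\geq\eps^{-3}\log n/p$ for a suitable small $\eps$, matching the hypothesis of Lemma~\ref{lem:2-edge-concentration}\ref{lem:2-edge-concentration-p2}). Every pair in $\cP$ participates in at least $(\gamma/4)pn$ edges of $G$ that meet $S$; equivalently, writing $W:=S$, we get a double-counting lower bound $e_G(\cP, S)\geq(\gamma/4)|\cP|\,pn$ — here I'd need to be a little careful that $\cP$ and $S$ are disjoint as required by the lemma, which holds since pairs in $\cB$ live in $V(G)\setminus S$. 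On the other hand, Lemma~\ref{lem:2-edge-concentration}\ref{lem:2-edge-concentration-p2} with $|W|=|S|\leq\lambda n$ gives $e_G(\cP,S)\leq(1+\eps)|\cP||S|p\leq(1+\eps)\lambda|\cP|np$, and choosing $\lambda<\gamma/8$ (say) contradicts the lower bound. This is exactly the structure of the proof of Lemma~\ref{lem:robust-degree-inh}, just with the roles of vertices and pairs swapped and with $e_G(\cdot,\cdot)$ replaced by $e_G(\cdot,\cdot)$ in the pair-sense defined in the preliminaries.

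**Subtlety / main obstacle.** The one genuinely delicate point — and the reason the constant $10^9$ appears — is making sure the hypothesis $|W|\leq|\cP|$ of Lemma~\ref{lem:2-edge-concentration}\ref{lem:2-edge-concentration-p2} is met: we need $|S|\leq|\cP|=\Theta(\log n/p)$. But $|S|$ can be as large as $\lambda n$, which is much bigger than $\log n/p$ when $p\gg\log n/n$. So a direct application fails, and instead I would split $S$ into $O(\lambda np/\log n)$ chunks $S_1,\dots,S_k$ each of size at most $|\cP|$ (padding the last chunk if necessary, or simply applying the lemma to a superset of each chunk of size exactly $\lceil10^9\log n/p\rceil$ — permissible since adding vertices to $W$ only increases $e_G(\cP,W)$), apply Lemma~\ref{lem:2-edge-concentration}\ref{lem:2-edge-concentration-p2} to each $(\cP,S_i)$, and sum: $e_G(\cP,S)\leq\sum_i e_G(\cP,S_i)\leq(1+\eps)|\cP|\sum_i|S_i|\,p\leq(1+\eps)|\cP|(|S|+k\cdot|\cP|)p$. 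Absorbing the $k|\cP|^2 p=O(\lambda n|\cP|p)$ term (it is comparable to $|S||\cP|p$) keeps the bound $\leq 3\lambda|\cP|np$, say, which still contradicts $(\gamma/4)|\cP|np$ for $\lambda$ small. The only probabilistic input is the w.h.p.\ event of Lemma~\ref{lem:2-edge-concentration} at parameter $\eps$, which needs $p\geq C\log n/n$ for the relevant set sizes to exceed $\eps^{-3}\log n/p$; no fresh concentration argument is required. (Note this lemma, unlike Lemma~\ref{lem:robust-degree-inh}, makes no claim about a bounded set $T$ fixing \emph{all} pairs — only that few pairs are bad — which is why no second ``clean-up'' phase analogous to the $x,y$ part of Lemma~\ref{lem:robust-degree-inh} is needed.)
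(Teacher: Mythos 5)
Your core argument — defining bad pairs, lower-bounding $e_G(\cP,S)$ by the codegree deficit, upper-bounding it by Lemma~\ref{lem:2-edge-concentration}, and contradicting — is exactly the paper's proof. But the ``subtlety / main obstacle'' you identify does not exist: you have misread the hypothesis of Lemma~\ref{lem:2-edge-concentration}~\ref{lem:2-edge-concentration-p2}. The condition $|W|\leq|\cP|$ belongs to part~\ref{lem:2-edge-concentration-p1}; part~\ref{lem:2-edge-concentration-p2} only requires that \emph{both} $|W|$ and $|\cP|$ be at least $\eps^{-3}\log n/p$, with no requirement that one be bounded by the other. So $|S|$ up to $\lambda n$ is perfectly admissible as $W$, and there is no need to split $S$ into chunks. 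The only case that needs handling is the opposite one, when $S$ is \emph{too small}: if $|S|<\eps^{-3}\log n/p$ one takes a superset of $S$ of size exactly $\eps^{-3}\log n/p$ (which, with $\eps=10^{-3}$, is where the $10^9$ comes from), and this is precisely why the paper's upper bound reads $(1+\eps)|\cP|\max\{|S|p,\,10^9\log n\}$. Both branches of the max are then killed, for small $\lambda$ and large $C$ respectively. Your chunking workaround would also go through (modulo a small care in how the padded chunks are kept disjoint from the vertices of $\cP$), but it is solving a problem that is not there.
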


\begin{proof}
  Let
  \[
    \cP := \big\{ \{u,v\} : u, v \in V(G) \setminus S, \deg_G(\{u,v\}, S) \geq
    \gamma np/4 \big\}.
  \]
  As $\Hnp$ w.h.p.\ has the property of
  Lemma~\ref{lem:2-edge-concentration}~\ref{lem:2-edge-concentration-p2} for
  $\eps = 10^{-3}$, assuming $|\cP| > 10^9\log n/p$ we have
  \[
    |\cP| \cdot \gamma np/4 \leq e_G(\cP, S) \leq (1+\eps)|\cP| \max\{|S|p,
    10^{9}\log n\}.
  \]
  This leads to a contradiction for $\lambda > 0$ sufficiently small and $C$
  large enough, as $|S| \leq \lambda n$ and $np \gg \log n$.
\end{proof}

\subsection{(Hyper)graph theory}

The following Dirac-type conditions for the existence of a loose Hamilton cycle
were mentioned in the introduction but we state them here explicitly in the form
in which we use them later. Recall, $\delta_1 = 7/16$ and $\delta_2 = 1/4$.

\begin{theorem}[\cite{buss2013minimum, kuhn2006loose}]
\label{thm:dense-graph}
  Let $d \in \{1,2\}$ and $\gamma > 0$. Every sufficiently large $3$-uniform
  hypergraph $\cH$ on an even number of vertices $n$ with $\delta_d(\cH) \geq
  (\delta_d + \gamma )\binom{n-d}{3-d}$ contains a loose Hamilton cycle.
\end{theorem}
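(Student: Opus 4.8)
Since Theorem~\ref{thm:dense-graph} merely restates, in asymptotic form, the Dirac-type results of \cite{buss2013minimum, kuhn2006loose}, no new argument is needed; in the sequel we invoke it purely as a black box (it is applied in Section~\ref{sec:main-proof} to a dense auxiliary $3$-graph extracted from a regular partition of $\Hnp$). For orientation, we indicate the shape of the absorbing-method proof one would run; it mirrors the strategy used throughout the present paper.

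Fix $d \in \{1,2\}$ and $\gamma > 0$, and suppose $\cH$ satisfies $\delta_d(\cH) \ge (\delta_d + \gamma)\binom{n-d}{3-d}$. The first ingredient is a \emph{connecting lemma}: under this degree hypothesis, any two disjoint ordered vertex pairs can be joined by a loose path of bounded length, and the connection can be carried out inside $V(\cH) \setminus Z$ for an arbitrary prescribed `forbidden' set $Z$ with $|Z| = o(n)$. The proof is a dense analogue of the neighbourhood-expansion computation behind Lemma~\ref{lem:large-second-neighbourhood}: the degree condition forces every vertex and every pair to have linearly many second-neighbourhood extensions, so two such expanding fans are bound to overlap. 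The second ingredient is an \emph{absorbing path} $P_0$ on $o(n)$ vertices with fixed endpoints: one observes that for each pair $\{u,v\}$ a positive proportion of short loose paths are \emph{absorbers} for $\{u,v\}$---that is, can be replaced by a loose path on the same vertices together with $u$ and $v$ and with the same endpoints---then samples such gadgets at random and chains them together via the connecting lemma, so that the resulting $P_0$ can absorb \emph{any} sufficiently small leftover set into itself.

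With $P_0$ built and a small reservoir set aside, one covers all but $o(n)$ of the remaining vertices of $\cH$ by a bounded number of loose paths; a standard greedy or weak-regularity argument powered by the minimum-degree condition suffices for this. Applying the connecting lemma on the reservoir, one splices $P_0$ and all of these paths into a single loose cycle whose complement $W$ has size $o(n)$ (automatically even, since $n$ is even and loose cycles have evenly many vertices), and the absorption property of $P_0$ then swallows $W$ to produce a loose Hamilton cycle.

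The main difficulty---and the reason the optimal constants are $\delta_1 = 7/16$ and $\delta_2 = 1/4$ rather than something smaller---is that this template only works when $\cH$ is bounded away from the extremal configuration. One must therefore treat the near-extremal regime separately by a stability argument: show that any $\cH$ whose minimum $d$-degree is close to $\delta_d\binom{n-d}{3-d}$ either admits the absorbing argument above or is structurally so close to the (essentially unique) extremal hypergraph that a loose Hamilton cycle can be constructed directly by hand. For $d = 1$ this extremal analysis, and the verification that $7/16$ is the correct threshold, is the delicate heart of \cite{buss2013minimum}, which we do not reproduce here.
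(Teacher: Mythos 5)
You correctly recognise that Theorem~\ref{thm:dense-graph} is imported as a black box from~\cite{buss2013minimum, kuhn2006loose}; the paper gives no proof of it, so there is nothing for your argument to diverge from. Two small points on your framing: (1) the theorem is invoked in the proof of Lemma~\ref{lem:almost-path-cover} in Section~\ref{sec:path-cover} (applied to the reduced graph $\cR_i$), not in Section~\ref{sec:main-proof}; and (2) because the hypothesis carries a $+\gamma$ slack, the asymptotic version stated here does not require the stability/near-extremal analysis you describe in your final paragraph---that machinery is only needed for the exact thresholds established in the later strengthenings~\cite{han2015minimum, czygrinow2014tight}, which the paper cites but does not use.
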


The next lemma gives a minimum degree condition for a $3$-graph that ensures
that any pair of vertices is contained in some loose cycle of length three. We
make use of it later for finding absorbers.

\begin{lemma}
  \label{lem:switcher-dense-graph}
  Let $\cH$ be a sufficiently large $n$-vertex $3$-uniform hypergraph which
  satisfies $\delta_1(\cH) \geq \frac{7}{16} \binom{n}{2}$. Then for every pair
  of vertices $u,v \in V(\cH)$, there exist distinct vertices $a_1, a_2, a_3,
  a_4 \in V(\cH) \setminus \{u,v\}$ such that $a_1a_2a_3, a_2a_4u, a_3a_4v \in
  E(\cH)$, that is $a_3a_1a_2ua_4va_3$ is a loose cycle.
\end{lemma}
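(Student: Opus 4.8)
The plan is to find the configuration greedily, choosing the auxiliary vertices $a_1,a_2,a_3,a_4$ one at a time, and at each step using the minimum-degree hypothesis $\delta_1(\cH)\ge\frac{7}{16}\binom{n}{2}$ to guarantee enough room. Concretely, the three required edges are $a_2a_4u$, $a_3a_4v$, and $a_1a_2a_3$; note that $a_4$ is the vertex that "connects" to both $u$ and $v$. So I would first pick $a_4$, then pick $a_2$ from the neighbourhood-structure of $\{u,a_4\}$, then pick $a_3$ from that of $\{v,a_4\}$, and finally pick $a_1$ to complete the edge $a_1a_2a_3$ — checking disjointness from $\{u,v\}$ and from the already-chosen vertices at each stage.

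The key quantitative input is that a single vertex of degree at least $\frac{7}{16}\binom{n}{2}$ sits in many edges, so its link graph (the graph on $V(\cH)\setminus\{x\}$ whose edges are the pairs $yz$ with $xyz\in E(\cH)$) has at least $\frac{7}{16}\binom{n}{2}$ edges; in particular a positive fraction of the $\binom{n-1}{2}$ pairs are link-edges, and by a simple averaging/Kruskal–Katona-type count, a linear number of vertices $w$ have the property that the pair $\{x,w\}$ has codegree $\Omega(n)$. First I would make this precise: since $e(\text{link}(x))\ge\frac7{16}\binom n2$, at least (roughly) $\frac7{16}n$ vertices $w$ satisfy $\deg_\cH(\{x,w\})\ge \frac7{16}\cdot\frac n4$, say (the exact constant is unimportant, only that it is linear). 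Call such $w$ "heavy for $x$". Now choose $a_4$ to be any vertex that is heavy for $u$ and simultaneously heavy for $v$ — this is possible because each "heavy" set has size $\ge(\frac7{16}-o(1))n$, so they must intersect (in fact in $\ge(\frac78-1-o(1))n=(\tfrac{-1}{8}+o(1))n$ vertices... which is the obstacle, see below), and avoid $\{u,v\}$.

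Here is the main obstacle and how I would get around it: two linear sets of size $(\frac7{16}-o(1))n$ need not intersect, since $\frac7{16}+\frac7{16}=\frac78<1$. So a crude union bound fails, and a single heavy-for-both vertex is not guaranteed by this cheap argument. The fix is to be less wasteful: instead of passing to "heavy" vertices and intersecting, I would work with the link graphs directly. We want $a_2,a_3,a_4$ with $ua_2a_4, va_3a_4\in E(\cH)$ and $a_2a_3\in\text{link}$-pair completing to an edge with some $a_1$. Equivalently: in the link graph $L_u$ of $u$ and the link graph $L_v$ of $v$, we want a vertex $a_4$ with a neighbour $a_2$ in $L_u$ and a neighbour $a_3$ in $L_v$ such that the pair $a_2a_3$ lies in some edge of $\cH$ (i.e.\ $a_2a_3$ has positive codegree and the third vertex $a_1$ can be chosen outside the small forbidden set). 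Since $L_u$ and $L_v$ are graphs on $n-2$ vertices with $\ge\frac7{16}\binom n2\ge(\frac78-o(1))\binom{n-1}2$ edges, each has minimum degree... no — each has \emph{average} degree $\ge(\frac78-o(1))n$, hence many vertices of degree $\ge(\frac38-o(1))n$ in each. Pick $a_4$ of high degree in \emph{both} $L_u$ and $L_v$ (now possible: the sets of vertices with $L_u$-degree $<\frac38n$ and with $L_v$-degree $<\frac38n$ each have size $<(\frac12+o(1))n$... still potentially disjoint-complement issue).

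Rather than chase constants by hand, the cleanest route — and the one I would actually write — is to invoke the dense Dirac-type statement only through its consequences for \emph{pair} degrees: reduce to showing that for \emph{most} pairs $\{a_2,a_3\}$ there is an edge, and that $u,v$ each have linear-sized neighbourhoods $N_\cH(u),N_\cH(v)$ with large edge-density, then use a counting argument to find the "X-shape" $a_2a_4,a_3a_4$ with $a_4\in N_\cH(u)\cap N_\cH(v)$-type constraints relaxed to a double-counting over edges incident to $u$ and edges incident to $v$. Precisely: count triples $(a_2,a_3,a_4)$ with $ua_2a_4\in E(\cH)$ and $va_3a_4\in E(\cH)$; this count is $\sum_{a_4}\deg_\cH(\{u,a_4\})\deg_\cH(\{v,a_4\})\ge \frac1n\big(\sum_{a_4}\deg_\cH(\{u,a_4\})\big)\big(\sum_{a_4}\deg_\cH(\{v,a_4\})\big)$ by Chebyshev's sum inequality — wait, that's not an inequality that holds in general; instead use Cauchy–Schwarz after noting $\sum_{a_4}\deg_\cH(\{u,a_4\})=2e(L_u)\ge\frac78\binom n2\cdot$const, and similarly for $v$, to conclude the number of such triples is $\Omega(n^3)$ unless the degree sequences are wildly anti-correlated, which a short argument rules out using that \emph{both} exceed $\frac78$ of the maximum so their "supports" (vertices of codegree $\ge\epsilon n$) overlap in $\Omega(n)$ vertices. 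Among the $\Omega(n^3)$ triples $(a_2,a_3,a_4)$, all but $O(n)\cdot n=o(n^3)$ have the pair $\{a_2,a_3\}$ lying in an edge (by an analogous minimum-degree/averaging bound on how many pairs have codegree $0$), and all but $o(n^3)$ avoid $\{u,v\}$ and have $a_2,a_3,a_4$ distinct; pick any surviving triple, then pick $a_1$ in the codegree-neighbourhood of $\{a_2,a_3\}$ avoiding the four forbidden vertices $\{u,v,a_4\}$ and $a_2,a_3$ themselves — possible since that codegree is $\ge 1$ and we only need to dodge a constant number of vertices, so as long as the codegree of \emph{surviving} pairs is taken to be $\ge C$ for a constant (which the counting delivers). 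I expect the bookkeeping around the "anti-correlation" step — showing $\sum_{a_4}\deg_\cH(\{u,a_4\})\deg_\cH(\{v,a_4\})=\Omega(n^3)$ from the two individual linear lower bounds — to be the only genuinely delicate point; everything else is routine averaging with the $\frac{7}{16}$ hypothesis.
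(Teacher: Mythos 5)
There is a genuine gap at precisely the step you flag as ``the only genuinely delicate point,'' and your proposed resolution does not close it. You want $\sum_{a_4}\deg_\cH(\{u,a_4\})\deg_\cH(\{v,a_4\})=\Omega(n^3)$, and justify this by asserting that both codegree sequences ``exceed $\tfrac78$ of the maximum so their supports overlap in $\Omega(n)$ vertices.'' This is not what the hypothesis gives. The averaging bound $\sum_w\deg(u,w)=2\deg_\cH(u)\ge\tfrac78\binom{n}{2}$ with $\deg(u,w)\le n-2$ only forces the $\epsilon n$-support of $\deg(u,\cdot)$ to have size roughly $\tfrac{7}{16}n$, and $\tfrac{7}{16}+\tfrac{7}{16}<1$, exactly the obstruction you noticed a few lines earlier. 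The convexity bound $\deg_\cH(u)\le\binom{|N_\cH(u)|}{2}$ does force $|N_\cH(u)|\ge(\sqrt7/4)n\approx 0.66n$, so $N_\cH(u)\cap N_\cH(v)$ is indeed linear; but this only gives a common $a_4$ with \emph{nonzero} codegree to both $u$ and $v$, which is far too weak for the next step. To finish, you need $a_3\in N_\cH(v,a_4)\cap N_\cH(a_2)$, and that intersection is forced to be nonempty only if $\deg_\cH(v,a_4)+|N_\cH(a_2)|>n$; since $|N_\cH(a_2)|\approx(\sqrt7/4)n$, this needs $\deg_\cH(v,a_4)\gtrsim(1-\sqrt7/4)n\approx 0.34n$, i.e.\ a \emph{constant-fraction} codegree for $v$ at $a_4$, not merely a positive one. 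Your counting scheme, even if corrected, would deliver $\Omega(n^3)$ triples weighted mostly by small codegrees, and your ``bad pair'' filter is also not $o(n^3)$: the number of pairs $\{a_2,a_3\}$ with codegree zero can be a constant fraction of $\binom{n}{2}$, so multiplying by $n$ choices of $a_4$ does not yield $o(n^3)$.

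The paper avoids all of this by invoking a nontrivial structural claim from \cite{buss2013minimum} (their Claim~9): if $\delta_1(\cH')\ge(5/8+\gamma)^2\binom{n}{2}$, then for any $u,v$ there is a set $W$ of size $\gamma n$ on which, after possibly swapping $u$ and $v$, \emph{every} $w\in W$ satisfies $\deg_\cH(u,w)\ge\gamma n$ \emph{and} $\deg_\cH(v,w)\ge 3n/8$. Since $7/16=(5/8+\gamma)^2$ with $\gamma=\sqrt7/4-5/8>0$, one then picks $a_4\in W$, picks $a_2$ with $ua_4a_2\in E(\cH)$, and uses $3/8+\sqrt7/4>1$ to find $a_3\in N_\cH(v,a_4)\cap N_\cH(a_2)$ and finally $a_1\in N_\cH(a_2,a_3)$ avoiding the handful of used vertices. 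The asymmetric ``one codegree small-but-linear, the other at least $3n/8$'' guarantee is exactly the ingredient your averaging cannot produce; you would need to prove a statement of that strength, which is the real content of the lemma.
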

\begin{proof}
  Suppose $\cH'$ is a sufficiently large $n$-vertex $3$-graph with
  $\delta_1(\cH') \geq (5/8+\gamma)^2\binom{n}{2}$, for some $\gamma > 0$. A
  simple counting argument (see, e.g., \cite[Claim~9]{buss2013minimum}) shows
  that for every two $u, v \in V(\cH')$, there is a set $W = W(u, v) \subseteq
  V(\cH')$ of size $\gamma n$, such that either:
  \begin{itemize}
    \item $\deg_{\cH'}(u,w) \geq \gamma n$ and $\deg_{\cH'}(v,w) \geq 3n/8$ for all
      $w \in W$, or
    \item $\deg_{\cH'}(v,w) \geq \gamma n$ and $\deg_{\cH'}(u,w) \geq 3n/8$ for
      all $w \in W$.
  \end{itemize}
  Since $7/16 = (5/8+\gamma)^2$ for $\gamma = \sqrt{7}/4-5/8 > 0$, by the
  discussion above there is a $W \subseteq V(\cH)$ such that, without loss of
  generality, $\deg_\cH(u,w) \geq \gamma n$ and $\deg_\cH(v,w) \geq 3n/8$ for
  all $w \in W$. Pick an arbitrary $a_4 \in W$ and an arbitrary $a_2 \in V(\cH)
  \setminus \{v\}$ such that $ua_4a_2 \in E(\cH)$. Note that $\deg_\cH(v, a_4)
  \geq 3n/8$ and $|N_\cH(a_2)| \geq \sqrt{7}n/4$ due to the minimum degree
  condition for $a_2$. Therefore, since $\sqrt{7}/4 + 3/8 > 1$, we have
  $|N_\cH(v, a_4) \cap N_\cH(a_2)| > 2$ (with plenty of room to spare). Pick
  $a_3 \in N_\cH(v, a_4) \cap N_\cH(a_2) \setminus \{u\}$ and pick $a_1 \in
  N_\cH(a_2,a_3) \setminus \{a_4,u,v\}$.
\end{proof}

We lastly need a Hall-type matching condition for `bipartite' hypergraphs due to
Haxell, which has been used frequently for embedding problems in random graph
theory.

\begin{theorem}[Haxell's condition~\cite{haxell1995condition}]
  \label{haxell}
  Let $\cH$ be an $\ell$-graph whose vertex set can be partitioned into sets $A$
  and $B$ such that $|e \cap A| = 1$ and $|e \cap B| =
  \ell-1$, for every edge $e \in E(\cH)$. Suppose that for every choice of
  subsets $A' \subseteq A$ and $B' \subseteq B$ such that $|B'| \leq (2\ell-
  3)(|A'|-1)$, there exists an edge $e \in E(\cH)$ intersecting $A'$ but not
  $B'$. Then $\cH$ has an $A$-saturating matching (i.e.\ a collection of
  disjoint edges whose union contains $A$).
\end{theorem}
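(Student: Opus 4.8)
The plan is to recast Haxell's condition as a problem about systems of disjoint representatives and to prove it by a maximum-partial-representative argument with an augmenting (alternating-tree) analysis. For $a \in A$ let $L_a$ be the $(\ell-1)$-uniform hypergraph on ground set $B$ consisting of the sets $e \setminus \{a\}$ over all $e \in E(\cH)$ with $a \in e$; since every edge meets $A$ in exactly one vertex, an $A$-saturating matching in $\cH$ is precisely a choice of an edge $b_a \in L_a$ for each $a \in A$ with the $b_a$ pairwise disjoint, i.e.\ a system of disjoint representatives (SDR) for the family $(L_a)_{a \in A}$. In this language the hypothesis reads: for all $A' \subseteq A$ and $B' \subseteq B$ with $|B'| \le (2\ell-3)(|A'|-1)$, there is some $a \in A'$ admitting an edge of $L_a$ disjoint from $B'$.

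Suppose for contradiction there is no SDR for all of $A$, and fix a partial SDR $\phi$ (defined on a subset $A_\phi \subsetneq A$) with $|A_\phi|$ maximum; put $V_\phi := \bigcup_{a \in A_\phi}\phi(a)$ and pick $a_0 \in A \setminus A_\phi$. By maximality every edge of $L_{a_0}$ meets $V_\phi$, as otherwise $\phi$ extends to $a_0$. From here I would run a BFS-style alternating exploration rooted at $a_0$: it maintains a set $C \subseteq A_\phi$ of discovered vertices, a tentative replacement edge $g(a) \in L_a$ for each discovered vertex, kept pairwise disjoint, and a forbidden set $B^\ast \subseteq B$. Whenever some discovered vertex $a$ admits a link edge disjoint both from the current replacement edges and from the $\phi$-values of the not-yet-discovered vertices, one can reroute the assignments along the alternating path from $a$ back to $a_0$ and obtain an SDR on $A_\phi \cup \{a_0\}$, contradicting maximality; otherwise the exploration discovers a new vertex of $A_\phi$ while $B^\ast$ grows by at most $2\ell-3$. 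When the exploration halts without an augmentation, the pair $A' := \{a_0\}\cup C$ and $B' := B^\ast$ satisfies $|B'| \le (2\ell-3)|C| = (2\ell-3)(|A'|-1)$, yet every edge of $L_a$ for every $a \in A'$ meets $B'$---contradicting the translated hypothesis. (The base case $|A|=1$ and the fact that the hypothesis is inherited by subfamilies are immediate, so one could equally phrase the whole argument as an induction on $|A|$.)

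The subtle point, which is the heart of Haxell's argument, is the bookkeeping that keeps the forbidden set small: one must show that each `layer' of the exploration contributes at most $2\ell-3 = 2(\ell-1)-1$ genuinely new vertices to $B^\ast$ rather than $2(\ell-1)$. Morally, discovering a new vertex $a$ costs the $\ell-1$ vertices of $\phi(a)$ (which must henceforth be avoided) plus the $\ell-1$ vertices of the link edge used to reach it, but these two $(\ell-1)$-sets overlap in at least the vertex that triggered the discovery, which saves one; making this precise requires choosing the explored link edges and the order of exploration so that nothing is double-charged and so that a failed extension genuinely produces an augmenting alternating structure for $\phi$. I expect essentially all of the work to lie in this step. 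A less elementary alternative is to derive the statement from the Aharoni--Haxell topological Hall theorem, bounding the topological connectivity of the independence complex of $\bigcup_{a\in A'}L_a$ in terms of a covering/matching parameter via a Meshulam-type estimate, but the combinatorial route above is self-contained and is the one I would pursue.
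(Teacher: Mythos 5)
The paper does not contain a proof of Theorem~\ref{haxell}: it is imported verbatim as a black-box from Haxell's 1995 paper, so there is no internal argument to compare against. Evaluating your sketch on its own merits: your translation into a system-of-disjoint-representatives statement for the link hypergraphs $L_a$ is correct and is the standard framing (with $\ell=2$ it correctly specialises to Hall's theorem), and your high-level plan --- fix a maximum partial SDR $\phi$, pick unmatched $a_0$, and run an alternating exploration that either augments $\phi$ or halts with a pair $(A',B')$ contradicting the hypothesis --- is indeed the strategy of Haxell's original proof.

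However, there is a genuine gap exactly where you flag one: the sketch never specifies the invariant the exploration maintains, never makes precise what ``the current replacement edges'' and $B^\ast$ are, never explains why the process can be unwound into an actual augmentation of $\phi$ when a free edge is found, and never justifies that a newly discovered vertex of $A_\phi$ charges only $2\ell-3$ rather than $2(\ell-1)$ fresh elements to $B^\ast$. The ``two $(\ell-1)$-sets overlap in at least the triggering vertex'' heuristic is plausible but not a proof: the link edge used to reach a new $a$ may intersect $\phi(a)$ in more than one vertex, edges may simultaneously hit several $\phi(a')$, and keeping the accounting exact requires maintaining a specific alternating-tree structure with pairwise-disjoint trial edges and carefully chosen roots --- this is the actual content of Haxell's argument, not a routine verification. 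Without that structure the terminal claim $|B^\ast|\le(2\ell-3)|C|$ is unsubstantiated, so as written the proposal is a correct outline of the right approach but not a proof. (Your remark that one could instead derive this from the Aharoni--Haxell topological Hall theorem is accurate, but that route is neither carried out nor self-contained either.)
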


\section{The sparse regularity method for
hypergraphs}\label{sec:sparse-regularity}

Following \cite{ferber2022dirac, ferber2020almost} in a natural generalisation
of the analogous concept for graphs, we say that, given $\eps > 0$ and $p \in
(0,1)$, a $3$-partite $3$-graph $G$ on sets $V_1, V_2, V_3$ is
$(\eps,p)$-regular if for every $X_i \subseteq V_i$, $|X_i| \geq \eps|V_i|$, we
have
\[
  \big|d_G(V_1,V_2,V_3) - d_G(X_1,X_2,X_3)\big| \leq \eps p,
\]
where $d_G(A, B, C) = e_G(A, B, C)/(|A||B||C|)$ stands for the density of edges
of a given triple.

A partition $(V_i)_{i\in [t]}$ of the vertex set of a $3$-graph $G$ is said to
be $(\eps,p)$-\emph{regular} if it is an equipartition and for all but at most
$\eps\binom{t}{3}$ triples $V_i,V_j,V_k$, the graph induced by them is
$(\eps,p)$-regular. For the (hyper)graph regularity lemma to be of any use, it
usually needs to prevent too many edges lying within some partition class $V_i$.
A common way to restrict this is the notion of \emph{upper-uniformity}. We say
that a $3$-graph $G$ is $(\eta,b,p)$-upper-uniform, for some $\eta \in (0, 1)$
and $b \geq 1$, if $d_G(V_1,V_2,V_3) \leq bp$ for all disjoint sets
$V_1,V_2,V_3$ with $|V_i| \geq \eta|V(G)|$.

With all these concepts at hand, we state a so-called weak\footnote{`Weak' comes
from the fact that in this variant, the corresponding counting and embedding
lemmas are not necessarily true in general---one would require a stronger
concept of regularity.} hypergraph regularity lemma, which acts as a natural
generalisation from the graph setting, can be proven in the same way (see,
e.g.~\cite{kohayakawa1997szemeredi, kohayakawa2003szemeredi, gerke2005sparse}
for the sparse regularity lemma and \cite{kohayakawa2010weak} for the regularity
lemma in dense hypergraphs), and appears in the same form in
\cite{ferber2022dirac, ferber2020almost}.

\begin{theorem}\label{thm:sparse-hypergraph-regularity}
  For every $\eps > 0$ and $t_0, b \geq 1$ there exist $\eta > 0$ and $T \geq
  t_0$ such that for every $p \in (0, 1]$, every $(\eta,b,p)$-upper-uniform
  $3$-graph $G$ with at least $t_0$ vertices admits an $(\eps,p)$-regular
  partition $(V_i)_{i \in [t]}$, where $t_0 \leq t \leq T$.
\end{theorem}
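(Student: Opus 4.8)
The plan is to run the standard energy-increment argument behind Szemerédi's regularity lemma, adapted to the sparse, $3$-uniform setting exactly as in the cited references. For an equipartition $\cP = (V_i)_{i \in [t]}$ of $V(G)$ define the $p$-normalised mean-square index
\[
  q(\cP) \;=\; \frac{1}{p^2\, v(G)^3} \sum_{1 \le i < j < k \le t} |V_i|\,|V_j|\,|V_k|\; d_G(V_i,V_j,V_k)^2 .
\]
If every part satisfies $|V_i| \ge \eta\, v(G)$, then $(\eta,b,p)$-upper-uniformity yields $d_G(V_i,V_j,V_k) \le bp$ for every triple of parts, and hence $0 \le q(\cP) \le b^2$. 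The proof rests on two facts, both pure convexity (Cauchy--Schwarz) and carried over verbatim from the graph case: (i) refining $\cP$ never decreases $q$; and (ii) if a triple $(V_i,V_j,V_k)$ fails to be $(\eps,p)$-regular, witnessed by $X_i \subseteq V_i$, $X_j \subseteq V_j$, $X_k \subseteq V_k$, then splitting these three parts along the witnesses increases the contribution of that triple to $q$ by at least a fixed positive power of $\eps$ times $|V_i||V_j||V_k|/v(G)^3$ (the exact exponent is immaterial).

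First I would start from a trivial equipartition into $t_0$ parts, handling the divisibility of $v(G)$ by the usual exceptional-class bookkeeping. Given a partition $\cP$ that is not $(\eps,p)$-regular — so, since we maintain equipartitions, more than $\eps\binom{t}{3}$ triples are irregular — I simultaneously split every part along all the witness sets of the irregular triples it participates in (each part lies in at most $\binom{t-1}{2}$ of them), and then chop further into pieces of a common equal size to restore the equipartition. Summing fact (ii) over the irregular triples gives a refinement $\cP'$ with $q(\cP') \ge q(\cP) + \eps^{c}$ for an absolute constant $c$. Iterating, since $q$ is confined to $[0,b^2]$, the process stops after at most $b^2 \eps^{-c}$ steps; and the number of parts grows from $t_0$ under a recursion of the form $t \mapsto t \cdot 2^{\binom{t}{2}}$ (times a bounded equipartition factor), so after boundedly many steps it is at most some $T = T(\eps,b,t_0) \ge t_0$ depending only on $\eps, b, t_0$ — crucially not on $\eta$ or $p$. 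A final clean-up redistributes leftover vertices into the exceptional classes at the cost of changing $\eps$ by a constant factor, producing the claimed $(\eps,p)$-regular equipartition with $t_0 \le t \le T$.

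The one genuine subtlety, and the step I expect to need the most care, is resolving the apparent circularity between $\eta$ and $T$: the bound $q \le b^2$ that drives termination requires all parts to have size at least $\eta\, v(G)$, yet $\eta$ must be chosen small in terms of $T$. This is handled by the usual bootstrapping: the recursion above shows that, \emph{as long as} $q \le b^2$ holds at every step, the number of parts never exceeds the explicit quantity $T = T(\eps,b,t_0)$; so one fixes this $T$ first and only then sets $\eta := 1/(2T)$ (shrinking it further if some auxiliary estimate demands). With this choice every part produced during the process has size at least $v(G)/T \ge 2\eta\, v(G)$, so $(\eta,b,p)$-upper-uniformity does apply to all triples of parts encountered, the bound $q \le b^2$ is valid throughout, and the argument closes. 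Beyond these bookkeeping points everything is identical to the dense graph regularity lemma, so I would present the argument briefly and refer to \cite{kohayakawa2003szemeredi, gerke2005sparse, kohayakawa2010weak, ferber2022dirac, ferber2020almost} for the omitted routine calculations.
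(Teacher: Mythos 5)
Your proposal is correct and follows the standard energy-increment argument that the paper itself defers to: the paper does not prove Theorem~\ref{thm:sparse-hypergraph-regularity} but simply asserts that it "can be proven in the same way" as in \cite{kohayakawa1997szemeredi, kohayakawa2003szemeredi, gerke2005sparse, kohayakawa2010weak}, which is exactly the Cauchy--Schwarz/mean-square-density scheme you sketch. You correctly identify the one sparse-specific subtlety (the interplay between $\eta$, the bound $q\le b^2$, and $T$) and resolve it by the standard bootstrapping of fixing $T=T(\eps,b,t_0)$ first and then choosing $\eta$; the only nit is that $\eta$ must actually be taken at least as small as $\eps/T$ (not merely $1/(2T)$) so that upper-uniformity also applies to the $\eps$-proportion witness sets $X_i$ used in the defect step, but you already hedge against this with "shrinking it further if some auxiliary estimate demands."
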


The upper-uniformity property can be seen as a `true property of random graphs'
and indeed is exhibited by $\Hnp$ with high probability (e.g.\ established by a
straightforward application of Chernoff's inequality and the union bound).

\begin{lemma}\label{lem:hnp-is-upper-uniform}
  For every $\eta \in (0,1)$ and $b > 1$ the random $3$-graph $\Hnp$ is
  $(\eta,b,p)$-upper-uniform with probability at least $1-e^{-\omega(n\log n)}$,
  provided that $p = \omega(n^{-2}\log n)$.
\end{lemma}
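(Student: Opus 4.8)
The plan is to observe that, since distinct $3$-sets appear in $\Hnp$ independently, for any three pairwise disjoint sets $V_1,V_2,V_3 \subseteq V(G)$ the quantity $e_G(V_1,V_2,V_3)$ --- which, by disjointness of the parts, counts exactly those edges meeting each $V_i$ in precisely one vertex --- is distributed as $\mathrm{Bin}(|V_1||V_2||V_3|,\,p)$. In particular it has mean $\mu := |V_1||V_2||V_3|\,p$, and if $|V_i| \ge \eta n$ for each $i$ then $\mu \ge \eta^3 n^3 p$.

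First I would fix one such triple and apply the upper-tail Chernoff bound (as in the applications used for Lemmas~\ref{lem:1-edge-concentration}--\ref{lem:general-edge-concentration}): writing $b = 1+\delta$ with $\delta > 0$ (possible since $b > 1$),
\[
  \Pr\big[e_G(V_1,V_2,V_3) > bp\,|V_1||V_2||V_3|\big] = \Pr\big[\mathrm{Bin}(|V_1||V_2||V_3|,p) > (1+\delta)\mu\big] \le e^{-c(\delta)\,\mu},
\]
for a constant $c(\delta) > 0$ depending only on $\delta$, hence only on $b$. Using $\mu \ge \eta^3 n^3 p$ together with the hypothesis $p = \omega(n^{-2}\log n)$, we get $c(\delta)\mu \ge c(\delta)\eta^3 n^3 p = \omega(n\log n)$, so this probability is $e^{-\omega(n\log n)}$.

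Then I would finish with a union bound. The number of ordered triples of pairwise disjoint subsets of $V(G)$ is at most $4^n$ (assign each of the $n$ vertices to $V_1$, $V_2$, $V_3$, or to none), which is $e^{O(n)}$ and is therefore swallowed by the per-triple failure bound above. Hence with probability at least $1 - 4^n e^{-\omega(n\log n)} = 1 - e^{-\omega(n\log n)}$, every triple of disjoint sets with all parts of size at least $\eta n$ satisfies $d_G(V_1,V_2,V_3) = e_G(V_1,V_2,V_3)/(|V_1||V_2||V_3|) \le bp$, which is exactly $(\eta,b,p)$-upper-uniformity. There is no genuine obstacle here; the only points needing a moment's care are recording the exact binomial distribution (where disjointness of the $V_i$ is used so that each edge is counted exactly once) and checking that the crude $4^n$ union bound is dominated by the per-triple failure probability, which holds precisely because $n^3 p = \omega(n\log n)$.
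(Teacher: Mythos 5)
Your proof is correct and follows exactly the approach the paper indicates: the paper does not spell out a proof of this lemma, remarking only that it is ``established by a straightforward application of Chernoff's inequality and the union bound,'' which is precisely what you carry out. Your accounting of the binomial distribution (using disjointness of the $V_i$), the per-triple Chernoff bound, the $4^n$ union bound, and the observation that $n^3p = \omega(n\log n)$ swallows the $e^{O(n)}$ factor is all accurate.
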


Given an equipartition $(V_i)_{i \in [t]}$ of the vertex set of a $3$-graph $G$,
we define the reduced graph $\cR = \cR((V_i)_{i \in [t]}, \eps, p, \alpha)$ on
vertex set $\{1,\dotsc,t\}$ corresponding to the sets $V_i$, whose edges are
all $3$-element sets of indices $\{i,j,k\}$ such that $d_G(V_i,V_j,V_k) \geq
\alpha p$ and $G[V_i,V_j,V_k]$ is $(\eps,p)$-regular.

In fact, in the regularity lemma, one can even roughly define where the clusters
$V_i$ lie in the graph $G$. Namely, given a partition $V(G) = P_1 \cup \dotsb
\cup P_h$, the $(\eps,p)$-regular partition resulting from
Theorem~\ref{thm:sparse-hypergraph-regularity} can be made such that all but at
most $\eps h t$ clusters $V_i$ each completely belong to one $P_j$ (may be
distinct for different $V_i$). This comes in handy when it comes to finding
absorbers.

The property that we use most frequently is that the reduced graph in a way
inherits degree properties from its underlying graph $G$. This is nothing fancy
and should come as no surprise to anyone familiar with the (graph) regularity
method. Again, very conveniently, one can make it such that degrees are
`controlled' within certain predetermined sets, and not only globally in the
whole graph $\cR$. The following statement is almost a one-to-one copy of
\cite[Lemma~4.7]{ferber2022dirac} (slightly paraphrased for convenience).

\begin{lemma}[{\cite[Lemma~4.7]{ferber2022dirac}}]\label{lem:reg-lem-partition-min-degree}
  For all $h, t_0 \in \N$ and $\eps, \delta, \lambda > 0$, there exist $\eta, b,
  T > 0$ such that the following holds. Let $p \in [0, 1]$ and let $G$ be a
  sufficiently large $n$-vertex $(\eta, b, p)$-upper-uniform $3$-graph. Let
  $n_1, \dotsc, n_h \geq \lambda n$ and $P_1,\dotsc,P_h$ be a partition of
  $V(G)$ with $|P_i| = n_i$. Then there exists an $(\eps,p)$-regular partition
  $(V_i)_{i \in [t]}$ of $V(G)$, for some $t \in [t_0, T]$ and a corresponding
  reduced $t$-vertex $3$-graph $\cR = \cR((V_i)_{i \in [t]}, \eps, p, 2\eps)$
  with the following property. Let $\cP_i$ be the set of clusters $V_j$
  contained entirely in $P_i$ and let $t_i = |\cP_i|$. Then:
  \begin{enumerate}[(i)]
    \item $t_i \geq (\frac{n_i}{n} - \eps h)t$ for every $i \in [h]$.
    \item Let $d \in \{1,2\}$. Suppose that for some $i, j \in [h]$ all but at
      most $o(n^d)$ of the $d$-sets $S \subseteq P_i$ satisfy
      \[
        \deg_G(S, P_j) \geq \delta\binom{n_j-d}{3-d}p.
      \]
      Then all but at most $\sqrt\eps\binom{t}{d}$ of the $d$-sets $\cS
      \subseteq \cP_i$ satisfy
      \[
        \deg_\cR(\cS, \cP_j) \geq (\delta - (h + 2)\eps - \sqrt\eps -
        3/t_0)\binom{t_j-d}{3-d}.
      \]
  \end{enumerate}
\end{lemma}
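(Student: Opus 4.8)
The plan is to obtain everything except part~(ii) directly from the general machinery already in place, and then to derive part~(ii) by a counting argument over pairs of "good" $d$-sets and triples of clusters. First I would invoke Theorem~\ref{thm:sparse-hypergraph-regularity} with parameters chosen so that the resulting partition can be aligned with $P_1,\dotsc,P_h$ in the sense described just before the lemma statement: all but at most $\eps h t$ clusters lie entirely inside a single $P_i$. Discarding the few misaligned clusters (which costs at most $\eps h t$ in the count of $t_i$) and using $|P_i| = n_i$, a trivial averaging gives $t_i \geq (n_i/n - \eps h)t$, which is~(i). The choice of $\eta, b, T$ is dictated by applying Theorem~\ref{thm:sparse-hypergraph-regularity} with $t_0$ replaced by a suitably large value (so that $3/t_0$ is a genuine error term) and with a regularity parameter $\eps'$ small in terms of $\eps$; we also need $\eta$ small enough that the upper-uniformity hypothesis interacts correctly with the regular partition. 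This is all standard and I would not belabour it.

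For part~(ii), fix $d \in \{1,2\}$ and $i,j$ as in the hypothesis. The idea is a double-counting argument linking $d$-sets in $P_i$ to $d$-sets of clusters in $\cP_i$. Call a $d$-set $S \subseteq P_i$ \emph{rich} if $\deg_G(S, P_j) \geq \delta\binom{n_j-d}{3-d}p$; by hypothesis all but $o(n^d)$ of them are rich. Call a $d$-set of clusters $\cS \subseteq \cP_i$ \emph{bad} if $\deg_\cR(\cS, \cP_j) < (\delta - (h+2)\eps - \sqrt{\eps} - 3/t_0)\binom{t_j-d}{3-d}$. I would estimate, for a single bad $\cS = \{V_a,\dotsc\}$ (say $|\cS| = d$), the number of rich $d$-sets $S$ that are ``compatible'' with $\cS$, meaning $S$ is a transversal of the clusters of $\cS$ (i.e.\ $|S \cap V_a| = 1$ for each cluster in $\cS$). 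Each such $S$ has $\deg_G(S,P_j) \geq \delta p\binom{n_j}{3-d}(1-o(1))$ edges into $P_j$; summing this over all transversals $S$ of $\cS$ and regrouping the edges by which clusters of $\cP_j$ their $P_j$-endpoints land in, almost all of this edge mass must be concentrated on cluster-triples that are \emph{not} $(\eps', p)$-regular or have density below $2\eps p$ — because regular triples of density $\geq 2\eps p$ contribute, by the definition of $(\eps',p)$-regularity applied to the clusters of $S$, at most roughly (density)$\cdot$(volume) to $\deg_\cR(\cS,\cP_j)\cdot$(cluster size)$^3$, and there are at most $\deg_\cR(\cS,\cP_j)$ such triples. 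Since the partition is $(\eps',p)$-regular, the number of irregular triples is at most $\eps'\binom{t}{3}$ globally, so for most $\cS$ only a small fraction of compatible transversals can be rich. Turning this around: a bad $\cS$ forces a deficit of order $\eps\binom{t_j}{3-d}$ in $\deg_\cR(\cS,\cP_j)$, which forces an $\Omega(\eps)$-fraction of its compatible transversals to fail richness, hence each bad $\cS$ ``uses up'' $\Omega(\eps)\cdot(n/t)^d\binom{n_j}{3-d}$ of a budget that is bounded by the $o(n^d)\cdot O(p n^{3-d})$ non-rich sets plus the edge mass on irregular/low-density triples, $O(\eps')\binom{n}{3}p$. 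Dividing through, the number of bad $\cS$ is $O(\sqrt{\eps'})\binom{t}{d} \leq \sqrt{\eps}\binom{t}{d}$ once $\eps'$ is chosen small enough relative to $\eps$.

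The main obstacle is the bookkeeping in the previous paragraph — carefully converting the edge-count hypothesis $\deg_G(S,P_j) \geq \delta p\binom{n_j-d}{3-d}$ into a degree statement in $\cR$ while tracking the several distinct sources of loss: (a) the $o(n^d)$ non-rich sets, (b) the $\eps'\binom{t}{3}$ irregular cluster-triples, (c) the triples of density below $2\eps p$ (each contributing $< 2\eps p$ per unit volume, which is where the $(h+2)\eps$-type term comes from after summing over the at most $\binom{t_j}{3-d}$ relevant triples), (d) degenerate configurations where two of the $3$ vertices of an edge fall in the same cluster (negligible, contributing the $O(1/t_0)$ term), and (e) the $\eps h t$ misaligned clusters. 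Each of these is individually routine, but assembling them so that the final error is exactly $(h+2)\eps + \sqrt{\eps} + 3/t_0$ requires care; fortunately this is precisely the content of \cite[Lemma~4.7]{ferber2022dirac}, and since the statement here is verbatim (up to paraphrasing) that lemma, I would in practice simply cite it, having only re-derived part~(i) and the alignment with $P_1,\dotsc,P_h$ in the form needed.
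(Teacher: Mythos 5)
The paper does not prove Lemma~\ref{lem:reg-lem-partition-min-degree} at all — it is imported verbatim from \cite[Lemma~4.7]{ferber2022dirac}, as the lemma header and the surrounding text (``almost a one-to-one copy \ldots slightly paraphrased for convenience'') make explicit. Your proposal ends by recommending precisely that citation, so the two approaches coincide; the double-counting sketch you offer for part~(ii), and the re-derivation of part~(i) via Theorem~\ref{thm:sparse-hypergraph-regularity}, are plausible reconstructions of the argument in the cited source but are not required here, and in particular part~(i) is already part of the statement being cited rather than something you need to supply independently.
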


We remark that, if one wants to only inherit minimum degree (that is, $d = 1$),
then standard double counting methods (see e.g.~\cite{noever2017local}) show
that this can be done without having the $\sqrt\eps t$ error term. Namely,
actually \emph{all vertices} satisfy the corresponding degree assumption. For $d
\geq 2$, the `almost all' is necessary.

Lastly, we use a hypergraph version of the infamous K{\L}R
conjecture\footnote{still known by this name, but has since its
 introduction~\cite{kohayakawa1997onk} been proven~\cite{balogh2015independent,
saxton2015hypergraph}; for a very recent, short, and
self-contained proof see~\cite{nenadov2022new}.} whose proof for \emph{linear
hypergraphs} was explicitly spelled out in \cite{ferber2020almost} but already
observed to hold in the work of Conlon, Gowers, Samotij, and
Schacht~\cite{conlon2014klr}. For a $3$-graph $H$ on vertex set $\{1, \dotsc,
t\}$ we denote by $\cG(H, n, m, p, \eps)$ the class of graphs $G$ obtained in
the following way. The vertex set of $G$ is a disjoint union $V_1 \cup \dotsb
\cup V_t$ of sets of size $n$. For each edge $ijk \in E(H)$ we add to $G$ an
$(\eps,p)$-regular $3$-graph with $m$ edges between the triple $(V_i,V_j,V_k)$
(and these are the only edges of $G$). A \emph{canonical copy} of $H$ in $G$ is
a $t$-tuple $(v_1,\dotsc,v_t)$ with $v_i \in V_i$ for every $i \in V(H)$ and
$v_iv_jv_k \in E(G)$ for every $ijk \in E(H)$. We write $G(H)$ for the number of
canonical copies of $H$ in $G$. Lastly, we need the notion of $3$-density
$m_3(H)$ of a $3$-graph $H$, which is defined as
\[
  m_3(H) = \max\Big\{ \frac{e(H') - 1 }{v(H') - 3} : H' \subseteq H \text{ with
  } v(H') \geq 4 \Big\}.
\]

\begin{theorem}\label{thm:KLR}
  For every linear $3$-graph $H$ and every $\alpha > 0$, there exist
  $\eps, \xi > 0$ with the following property. For every $\eta > 0$, there is a
  $C > 0$ such that if $p \geq CN^{-1/m_3(H)}$, then with probability $1 -
  e^{-\Omega(N^3p)}$ the following holds in $\cH^3(N, p)$. For every $n \geq
  \eta N$, $m \geq \alpha pn^3$, and every subgraph $G$ of $\cH^3(N,p)$ in
  $\cG(H, n, m, p, \eps)$, we have $G(H) \geq \xi n^{v(H)}p^{e(H)}$.
\end{theorem}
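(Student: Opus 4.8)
The plan is to derive the statement as a first-moment consequence of the \emph{counting version} of the K{\L}R theorem for linear $3$-graphs. That version --- whose proof rests on balanced supersaturation together with the hypergraph container method, and which is the only genuinely hard ingredient, so I will quote it as a black box (see~\cite{conlon2014klr, ferber2020almost}, via \cite{balogh2015independent, saxton2015hypergraph}, cf.\ \cite{nenadov2022new}) --- asserts: for every linear $3$-graph $H$ and every $\beta > 0$ there exist $\eps, \xi_0, D > 0$ such that, for every $n$, every $m \geq D\, n^{3 - 1/m_3(H)}$, and every fixed family of disjoint sets $V_1, \dotsc, V_t$ of size $n$, the number of graphs $G$ on $V_1 \cup \dotsb \cup V_t$ consisting of one $(\eps, m/n^3)$-regular $3$-graph with exactly $m$ edges on each triple $(V_i, V_j, V_k)$ with $ijk \in E(H)$, and with $G(H) < \xi_0\, n^{v(H)} (m/n^3)^{e(H)}$, is at most $\beta^m \binom{n^3}{m}^{e(H)}$. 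I apply this with $\xi := \alpha^{e(H)}\xi_0$ --- so that, since $m \geq \alpha p n^3$, the threshold $\xi_0 n^{v(H)}(m/n^3)^{e(H)}$ is at least $\xi\, n^{v(H)} p^{e(H)}$ --- and with $\beta$ chosen small enough that $\beta\,(e/\alpha)^{e(H)} \leq e^{-1}$; the constant $\eps$ in the theorem is likewise inherited (the paper's $(\eps,p)$-regularity and the relative regularity used above agree up to a harmless rescaling of $\eps$, since each block has density $m/n^3 \in [\alpha p, (1+\eps)p]$ whenever $G \subseteq \cH^3(N,p)$). Crucially, $\eps, \xi_0, D$ depend only on $H$ and $\beta$, so it is legitimate to fix $C$ only at the very end, large in terms of $H$, $\alpha$, and $\eta$.

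Now I run the first moment. A subgraph $G \subseteq \cH^3(N,p)$ that lies in some $\cG(H,n,m,p,\eps)$ with $n \geq \eta N$, $m \geq \alpha p n^3$, and $G(H) < \xi n^{v(H)} p^{e(H)}$ is specified by: a placement of its $t \leq 1/\eta$ clusters inside $[N]$ --- equivalently, a map sending each vertex of $[N]$ to one of the $t$ clusters or to ``unused'', hence at most $(\lfloor 1/\eta\rfloor + 1)^N = e^{c_\eta N}$ options with $c_\eta := \log(\lfloor 1/\eta\rfloor + 1)$; a pair $(n, m)$, at most $N^4$ options; and, for fixed clusters and $m$, one of at most $\beta^m \binom{n^3}{m}^{e(H)}$ ``bad'' graphs. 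Each fixed bad graph has exactly $m\,e(H)$ edges and hence is a subgraph of $\cH^3(N,p)$ with probability $p^{m\,e(H)}$, while $\binom{n^3}{m} p^m \leq (e n^3 p/m)^m \leq (e/\alpha)^m$ by $m \geq \alpha p n^3$. Summing,
\[
  \Pr\big[\,\exists\, n, m, G \ \text{as above}\,\big]
  \ \leq\ N^4\, e^{c_\eta N} \sum_{m \geq \alpha p n^3} \big(\beta\,(e/\alpha)^{e(H)}\big)^m
  \ \leq\ 2 N^4\, e^{c_\eta N}\, e^{-\alpha p n^3}.
\]

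It remains to absorb the $N^4 e^{c_\eta N}$ prefactor into the exponent. From $n \geq \eta N$ we get $\alpha p n^3 \geq \alpha \eta^3 N^3 p$, and from $p \geq C N^{-1/m_3(H)}$ with $m_3(H) \geq 1/2$ (which holds for every connected linear $3$-graph with at least two edges --- all other cases are either vacuous or trivial, the canonical-copy count then being a product of edge counts) we get $N^2 p \geq C$, so $N^3 p \geq C N \geq C \log N$. Taking $C$ large in terms of $\eta, \alpha$ --- and at least $D/(\alpha\,\eta^{1/m_3(H)})$, so that $\alpha p n^3 \geq \alpha C \eta^{1/m_3(H)} n^{3-1/m_3(H)} \geq D n^{3-1/m_3(H)}$ and the counting version indeed applies --- makes $\tfrac12 \alpha\eta^3 N^3 p$ exceed $c_\eta N + 4\log N + 1$, and the bound above becomes $e^{-\Omega(N^3 p)}$, as required. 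Beyond quoting the counting theorem, the whole argument is bookkeeping; the one point deserving care is keeping the constants straight across the nested quantifier blocks, which works precisely because $\eps$, $\xi_0$, $D$ come out of the counting version before $\eta$ enters.
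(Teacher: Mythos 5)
Your derivation is correct, but the comparison here is a bit lopsided: the paper does not prove Theorem~\ref{thm:KLR} at all --- it simply cites the linear-hypergraph K{\L}R statement from \cite{ferber2020almost} (which in turn rests on \cite{conlon2014klr, balogh2015independent, saxton2015hypergraph}). What you have written is the standard first-moment derivation of the probabilistic K{\L}R statement from the counting version, with the hard part (the counting theorem itself, a containers/balanced-supersaturation argument) quoted as a black box, exactly as the cited sources do. Given that framing, the bookkeeping is sound: setting $\xi := \alpha^{e(H)}\xi_0$ is the right translation between relative density $m/n^3$ and $p$; the observation that $(\eps,p)$-regularity implies $(\eps/\alpha, m/n^3)$-regularity (since $m/n^3 \geq \alpha p$) correctly handles the rescaling, and $\eps$ is indeed allowed to depend on $\alpha$ by the quantifier order in the theorem; the chain $m \geq \alpha p n^3 \geq \alpha C\eta^{1/m_3(H)} n^{3-1/m_3(H)} \geq D n^{3-1/m_3(H)}$ legitimately invokes the counting theorem's hypothesis once $C$ is large; and the final estimate $\Pr \leq \mathrm{poly}(N)\, e^{O(N)}\, e^{-\alpha\eta^3 N^3 p}$ does collapse to $e^{-\Omega(N^3p)}$ precisely because $m_3(H) \geq 1/2$ forces $N^3p \gg N$.

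Two small inaccuracies worth flagging, neither fatal. First, the number of clusters is $t = v(H)$, a constant depending only on $H$ (your bound $t \leq 1/\eta$ holds because $v(H)n \leq N$ and $n \geq \eta N$, but it is cleaner to just say $(v(H)+1)^N$ placements). Second, your display mixes an $N^4$ prefactor (meant to cover the union bound over $n$ and $m$) with an inner geometric sum over $m$; the sum already absorbs the union over $m$, so the $N^4$ is partly redundant. Both are harmless since the prefactor is $e^{O(N\log N)}$ and the exponent is $\Omega(N^3 p) = \omega(N\log N)$. You should also be careful with the parenthetical about disconnected $H$: for disconnected linear $H$ one can have $m_3(H) < 1/2$ (two disjoint edges give $m_3 = 1/3$), so ``$m_3(H) \geq 1/2$'' is a property of connected $H$ with $e(H) \geq 2$; the disconnected case then follows from the connected one by multiplicativity of canonical-copy counts over components, and the degenerate one-edge case is handled separately, as you note. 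Since the graph $H$ actually used in the paper (the contracted backbone of an absorber) is connected with $m_3(H) = 2/3$, this caveat does not bite.
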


Strictly speaking, the subgraph $G$ that we later apply Theorem~\ref{thm:KLR} to
is not a member of $\cG(H, n, m, p, \eps)$, in particular not all
$(\eps,p)$-regular 3-graphs have exactly $m$ edges---the number of edges across
these are within a constant factor of each other. However, subsampling (see,
e.g.~\cite[Lemma 4.3]{gerke2005sparse}) circumvents this. For clarity of
presentation, we prefer to not explicitly spell out this argument.

\section{Covering random hypergraphs by loose paths}\label{sec:path-cover}

In this section we show a vital part of every strategy relying on the absorbing
method which in our specific problem reads as: the majority of vertices of $G
\subseteq \Hnp$ can be covered by a few (loose) paths. For the purposes of this
lemma, we consider single vertices to be loose \emph{paths of length zero}.

\begin{lemma}\label{lem:almost-path-cover}
  Let $d \in \{1,2\}$. For every $\gamma, \rho > 0$, there exists a $C > 0$ such
  that w.h.p.\ $\Hnp$ has the following property, provided that $p \geq C\log n
  \cdot \max\{n^{-3/2},n^{-3+d}\}$. Let $G \subseteq \Hnp$ with $v(G)\geq n/2$
  in which all but $o(n^{d})$ $d$-sets $S \subseteq V(G)$ satisfy $\deg_G(S)
  \geq (\delta_d+\gamma) p \binom{n-d}{3-d}$. Then there exist at most $\rho n$
  vertex-disjoint loose paths that cover the vertex set of $G$.
\end{lemma}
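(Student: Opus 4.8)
The plan is to use the DFS (depth-first search) exploration technique, adapted to loose paths in 3-graphs, which is the standard tool for covering almost all vertices of a host graph by a small number of paths. The rough idea: run DFS building a single loose path, pushing new vertices onto it whenever possible and popping when stuck; each time we get stuck we start a fresh path. The number of paths produced equals roughly the number of ``stuck'' events, so we must argue that stuck events are rare. A stuck event at a path with endpoint $x$ means that the pair of available neighbours one would need to extend the path cannot be found among the as-yet-unused vertices. We will leverage the fact that in $\Hnp$ (with the given $p$), every set of linearly many vertices receives many edges, so extension fails only when the set of unused vertices is already small.

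Concretely, I would proceed as follows. First, I discard the $o(n^d)$ ``bad'' $d$-sets; this removes only $o(n)$ vertices (for $d=1$ directly, for $d=2$ by noting that a vertex lying in $\Omega(n)$ bad pairs would itself have to be scarce, using Lemma~\ref{lem:general-edge-concentration} or a direct count), and those go into the final collection as length-zero paths—costing at most $o(n) \le \rho n/3$ paths. Next, I set up the DFS on the remaining graph $G'$: maintain a current loose path $P$, a set $U$ of unused vertices, and try to extend $P$ at its current endpoint $v$ by finding two unused vertices $u,w \in U$ with $vuw \in E(G')$ (equivalently, a pair in $N_{G'}(v)$ restricted to $U$ forming an edge with $v$); if such $u,w$ exist, append them to $P$ and remove them from $U$; if not, ``close off'' $P$, add it to the output collection, and restart with a new path from an arbitrary vertex of $U$. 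Terminate when $U$ is empty. The output is a set of vertex-disjoint loose paths covering $V(G')$.

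The crux is bounding the number of restarts by $\rho n/3$. Suppose a restart happens at endpoint $v$; this means there are no two vertices of the current $U$ forming an edge with $v$, i.e.\ at most one vertex of $U$ lies in a pair with $v$—so essentially $\deg_G(v, U) = 0$ after excluding one vertex. But $v$ was a good vertex, so $\deg_G(v) \ge (\delta_d + \gamma)p\binom{n-d}{3-d} \ge \gamma p\binom{n-1}{2}/2$; actually I want to phrase this for both $d$: since $v$ lies in many good $d$-sets, it has degree $\ge \Omega(n^2 p)$. Then $e_G(v, U, V(G)) \le e_G(\{v\}, U, V(G))$ must be at least this quantity, yet if $|U| \le \lambda n$ for a small $\lambda$, Lemma~\ref{lem:1-edge-concentration} (part \ref{lem:1-edge-concentration-p1} if $|U|$ is tiny, part \ref{lem:1-edge-concentration-p2} otherwise, applied with $X = \{v\}$ enlarged to size $\eps^{-3}\log n/(np)$ if needed) bounds $e_G(v, U, V(G))$ by $O(\lambda n^2 p) + O(\log^2 n/(np))$, a contradiction for $\lambda$ small and $C$ large. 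Hence restarts only occur once $|U| \le \lambda n$; each restart and each subsequent path then consumes at least—well, restarts with small $U$ can still be numerous, so the cleaner statement is: the DFS uses each vertex at most once, and a restart with $|U| > \lambda n$ is impossible, so all restarts happen within the last $\lambda n$ vertices; bounding the number of those restarts crudely by $\lambda n$, and choosing $\lambda \le \rho/3$, gives at most $\rho n/3$ restart-paths. Adding the initial path and the $\le \rho n/3$ length-zero paths from bad vertices, the total is at most $\rho n$, as required. The main obstacle I anticipate is making the ``restart needs two unused neighbours'' analysis fully rigorous for $d = 2$, where the good-degree hypothesis is on pairs rather than vertices: one must first show that all but $o(n)$ vertices $v$ have $\deg_G(v) = \Omega(n^2 p)$ (summing the pair-degree hypothesis over pairs through $v$ and discarding the few vertices in too many bad pairs), and treat the exceptional vertices as extra length-zero paths.
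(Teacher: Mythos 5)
The paper's proof of this lemma is entirely different from yours: it applies the sparse regularity lemma (Lemma~\ref{lem:reg-lem-partition-min-degree}) to $G$, partitions the reduced graph randomly into small pieces, uses the dense Dirac-type theorem (Theorem~\ref{thm:dense-graph}) together with degree inheritance (Lemma~\ref{lem:random-set-almost-all-degree-inh}) to find a loose Hamilton cycle in each piece, and only then runs a DFS \emph{inside each $(\eps,p)$-regular triple} along that cycle (Lemma~\ref{lem:covering-regular-triples}), where the regularity property---any three $\eps$-fractions of the clusters contain an edge---does the real work.

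Your direct DFS on $G$ itself does not close, and the gap is in the central claim that ``a restart with $|U| > \lambda n$ is impossible.'' A stuck event at endpoint $v$ means $\deg_G(v,U) = 0$, i.e.\ every pair of $N_G(v)$ meets $R := V(G')\setminus U$. This only gives $\deg_G(v) \le e_G(v,R,V(G))$, and the comparison with the edge-concentration bounds (Lemma~\ref{lem:1-edge-concentration}) rules out stuck events only while $|R|$ is \emph{tiny}, not while $|U|$ is large: for $d=1$ and $p = \Theta(n^{-3/2}\log n)$ one has $\deg_G(v) = \Theta(\sqrt{n}\log n)$ while $e_G(v,R,V(G)) = O(\eps^{-4}|R|\log n)$, so the contradiction only forces $|R| \gtrsim \sqrt{n}$, i.e.\ restarts are excluded only during the first $O(\sqrt{n})$ steps. (Your argument also writes $e_G(v,U,V(G))$ where the relevant quantity is $e_G(v,R,V(G))$; ``at most one vertex of $U$ lies in a pair with $v$'' is not what stuck means.) More structurally, the minimum-degree hypothesis cannot guarantee $\deg_G(v,U) > 0$ once $|U| \lesssim 3n/4$: in $\Hnp$ one has $\deg(v,U) \approx (|U|/n)^2 \deg(v)$, so an adversary may delete all edges $vuw$ with $u,w \in U$ and still retain a $(7/16+\gamma)$-fraction of edges at $v$ whenever $(|U|/n)^2 < 9/16 - \gamma$. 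Consequently restarts can occur from $|U| \approx 3n/4$ all the way down, producing up to $\Theta(n)$ paths, which is not $\le \rho n$ for small $\rho$. The same obstruction defeats a DFS with backtracking. This is precisely what the regularity step in the paper buys: inside an $(\eps,p)$-regular triple, \emph{any} three sets of size $\eps m$ span an edge, so the DFS there only terminates when all unexplored sets have shrunk to size $O(\eps m)$.
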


To a reader familiar with the topic, there is no magic that happens here. We
apply the sparse regularity lemma to $G$, find a desirable structure in the
obtained reduced graph, and then use it as a guide to construct loose paths in
$G$ itself. Perhaps the most innovative thing comes in the part where, in an
$(\eps,p)$-regular triple $(V_1,V_2,V_3)$, we find a long loose path covering
all but $o(|V_i|)$ vertices in each $V_i$. This itself relies on a widely-used
\emph{Depth-First Search} (DFS) technique employed explicitly in the context of
graphs in \cite{ben2012long, ben2012size}. (For an extremely neat application of
the method and more in-depth discussion see~\cite{krivelevich2016long}.)

\begin{lemma}\label{lem:covering-regular-triples}
  Let $\cH = (V_1,V_2,V_3;E)$ be a $3$-partite $3$-graph with $|V_1| = |V_3|= t$
  and $|V_2| = 2t$ such that for every choice $X_i \subseteq V_i$ of size $|X_i|
  = k$, there exists an edge in $\cH[X_1 \cup X_2 \cup X_3]$. Then there is a
  loose $uv$-path of length $2t - 4k$ in $\cH$ with $u,v \in V_1 \cup V_3$,
  and whose every other degree-one vertex lies in $V_2$.
\end{lemma}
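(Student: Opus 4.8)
The plan is to build the path greedily via a DFS exploration that alternately visits $V_1$ (or $V_3$) and then $V_2$, using the covering hypothesis to guarantee we can always extend until very few vertices remain unexplored. Concretely, I would run a depth-first search on the ``auxiliary'' structure whose states are partial loose paths ending at a vertex of $V_1\cup V_3$, where from such a state we try to extend by one edge $xyz$ with the current endpoint playing the role of $x\in V_1\cup V_3$, a fresh $y\in V_2$, and a fresh $z\in V_1\cup V_3$, so that $z$ becomes the new endpoint and $y$ becomes an internal degree-one vertex lying in $V_2$ (as required). Maintain the usual DFS invariant: a stack $S$ of vertices currently on the path and a set $T$ of vertices already fully processed (popped). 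When we cannot extend from the current endpoint, we pop it (and the preceding $V_2$-vertex) and backtrack.

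The key point is a counting argument for why the DFS cannot get stuck too early. Suppose at some moment the current path has length $\ell < 2t-4k$; I want to show we can extend. Let $A_i \subseteq V_i$ denote the vertices of $V_i$ not yet touched by the path (neither on the stack nor in $T$ nor currently blocked). Along a loose path of length $\ell$ we use exactly $\ell+1$ vertices from $V_1\cup V_3$ (the degree-one/internal ``odd'' vertices) and $\ell$ vertices from $V_2$; since $|V_1|=|V_3|=t$ and $|V_2|=2t$, as long as $\ell < 2t-4k$ we have at least $2t - (\ell+1) > 2t - (2t-4k) = 4k$ untouched vertices in $V_1\cup V_3$ and at least $2t-\ell > 4k$ untouched in $V_2$; by pigeonhole one of $A_1, A_3$ has size $\ge 2k > k$, and $|A_2| > 4k \ge k$. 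Now apply the covering hypothesis to any $k$-subset of the larger of $A_1,A_3$, any $k$-subset of $A_2$, and — this is the delicate part — a $k$-subset of $V_1\cup V_3$ that includes the current endpoint $u$. That gives an edge inside $X_1\cup X_2\cup X_3$; the only snag is that the hypothesis supplies \emph{some} edge there, not necessarily one \emph{through} $u$.

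The way around this is the standard DFS trick: don't demand the extension go through the current endpoint; instead, whenever the vertex on top of the stack has no legal extension, pop it. The invariant we actually preserve is that every vertex of $V_1\cup V_3$ is either on the stack, or in $T$ (permanently discarded because at the time it was on top it had no extension into the then-untouched sets), and similarly for the ``bridge'' vertices of $V_2$. When the stack empties of extendable vertices, we have a maximal path; the claim is that at termination the number of discarded vertices is small. Indeed, if at termination $|T\cap(V_1\cup V_3)| + |S\cap(V_1\cup V_3)|$ is the total and the path has length $\ell$, then $|S\cap(V_1\cup V_3)| = \ell+1$ and for the remaining untouched sets to force us to stop we'd need, by the computation above, fewer than $4k$ untouched in $V_1\cup V_3$ and fewer than $k$ untouched in $V_2$ — and the latter cannot happen while $\ell < 2t-4k$, because $|V_2|=2t$ forces $> 4k > k$ untouched in $V_2$. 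A symmetric bookkeeping on $V_2$ (at most one $V_2$-vertex is discarded per discarded $V_1\cup V_3$-vertex) closes the loop, yielding $\ell \ge 2t-4k$. Truncating to a sub-path of length exactly $2t-4k$ with both endpoints in $V_1\cup V_3$ — which is possible since endpoints of a loose path lie in $V_1\cup V_3$ by construction and every second internal vertex lies in $V_2$ — gives the conclusion.

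The main obstacle, and the point requiring the most care in the write-up, is making the DFS extension rule precise so that ``fresh $y\in V_2$, fresh $z\in V_1\cup V_3$, endpoint $x$'' interacts correctly with the covering hypothesis: the hypothesis is about the \emph{existence} of an edge in a tripartite $k\times k\times k$ box, so one must set up the DFS so that being stuck at a vertex genuinely certifies that some $k\times k\times k$ box (built from untouched sets plus the stack top) contains \emph{no} edge, contradicting the hypothesis once the untouched sets are large enough. I expect this to be a short but slightly fiddly argument, essentially the three-partite loose-path analogue of the Ben-Eliezer–Krivelevich–Sudakov DFS bound for long paths in sparse graphs.
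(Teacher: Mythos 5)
Your overall plan — a DFS exploration with a stack $S$ and discarded set $T$, using the tripartite covering hypothesis to show the path must get long — is the same as the paper's, and the invariant you state (vertices in $T$ had no extension into the then-untouched sets) is exactly the right one. However, the execution has genuine gaps.

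First, the covering hypothesis must be applied to the discarded set, not to ``the stack top''. Being stuck at a \emph{single} vertex does not certify the absence of an edge in any $k\times k\times k$ box; you need $k$ vertices of $V_1$ (say) that are \emph{all} known to have no edge into the current untouched parts of $V_2$ and $V_3$, and the set $T\cap V_1$ is precisely that (since untouched sets only shrink). The paper therefore looks at the \emph{first moment} when $|T\cap V_i|=k$ for some $i\in\{1,3\}$ and applies the hypothesis with $X_1=T\cap V_i$, $X_2\subseteq U\cap V_2$, $X_3\subseteq U\cap V_{4-i}$ to conclude that one of $|U\cap V_2|,|U\cap V_{4-i}|$ must be below $k$. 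Your version instead waits until termination and then claims $|T|$ is small; but at termination $U\cap(V_1\cup V_3)=\varnothing$, the last path on the stack need not be the longest one encountered, and you never actually bound $|T\cap V_2|$ from the invariant. Relatedly, your claim that $\ell<2t-4k$ forces more than $4k$ untouched vertices in $V_2$ ignores $T\cap V_2$ entirely, which is precisely the quantity that must be controlled; the correct observation is that each pop moves one $V_2$-vertex to $T$ alongside one $V_1\cup V_3$-vertex, so $|T\cap V_2|\le|T\cap V_1|+|T\cap V_3|<2k$ at the first moment $|T\cap V_1|=k$.

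Second, the case analysis is incomplete. Once the hypothesis forces $|U\cap V_2|<k$ or $|U\cap V_3|<k$, the first case gives a long path directly, but the second case ($|U\cap V_2|\ge k$, $|U\cap V_3|<k$) requires the additional balance observation that $|S\cap V_1|$ and $|S\cap V_3|$ differ by at most one (since the loose path alternates between $V_1$ and $V_3$ through $V_2$), which combined with $|T\cap V_1|=k>|T\cap V_3|$ forces $|U\cap V_1|<k$ as well. This step is nowhere in your write-up, and without it the argument does not close.
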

\begin{proof}
  We explore the graph $\cH$ by using a variant of the \emph{Depth-First Search}
  (DFS) procedure as follows. To start with, set $T = \varnothing$, $S =
  \varnothing$, and $U = V(\cH)$. For as long as $U \cap (V_1 \cup V_3) \neq
  \varnothing$, do:
  \begin{itemize}
    \item if $S = \varnothing$, pick an arbitrary vertex from $U \cap (V_1 \cup
      V_3)$ and add it to $S$;
    \item if $S \neq \varnothing$ and there is an edge $uvw \in \cH$ such that
      $u \in V_i$ is the last added vertex to $S$, and $v \in U \cap V_2$ and $w
      \in U \cap V_{4-i}$, move $v, w$ from $U$ to $S$ in that order;
    \item otherwise, for $uvw \in \cH$ being the last three vertices added to
      $S$, move $v, w$ from $S$ to $T$ (if there is no such edge just move the
      only vertex $u$ in $S$ to $T$).
  \end{itemize}

  Observe that the vertices in $S$ at all times span a loose path as wanted in
  the lemma, but maybe not of the required length, and, crucially, there is never a
  time when some edge
   $uvw \in \cH$ is such that $u \in T \cap V_i$, $v \in U \cap V_2$, and $w
  \in U \cap V_{4-i}$, for $i \in \{1,3\}$. We aim to show that at some point
  we have $|S \cap V_2| \geq 2t - 4k + 1$ which is sufficient for the lemma to
  hold.

  In every step of the procedure either one (if $S = \varnothing$, say) or two
  vertices get moved from $U$ to $S$ or from $S$ to $T$. Consider the first
  moment in time when $|T \cap V_i| = k$, for some $i \in \{1, 3\}$; we may
  safely assume this happens for $V_1$. So, $|T \cap V_3| < k$, and moreover by
  the description of the procedure above, at this point we necessarily have $|T
  \cap V_2| < 2k$.

  As $|T \cap V_1| = k$, it cannot be that $|U \cap V_2|, |U \cap V_3| \geq k$,
  by the property of the lemma that such three sets must contain an edge.
  Assuming $|U \cap V_2| < k$, we have $|S \cap V_2| \geq 2t - 3k + 2$ as
  desired.

  Otherwise, suppose $|U \cap V_2| \geq k$ and $|U \cap V_3| < k$. From the fact
  that $|S \cap V_1| = |S \cap V_3| \pm 1$ we conclude that the intersections of
  $V_1$ and $V_3$ with $T \cup U$ are the same (up to $\pm 1$). This further
  implies $|U \cap V_1| < k$ (recall, $|T \cap V_3| < |T \cap V_1| = k$).
  Putting things together, we have $|S \cap (V_1 \cup V_3)| \geq 2t - 4k + 3$,
  and hence again $|S \cap V_2| \geq 2t - 4k + 2$ as desired.
\end{proof}

As a reminder, the exact values of $\delta_d$ are known for $3$-uniform
hypergraphs to be $\delta_1 = 7/16$ and $\delta_2 = 1/4$.

\begin{proof}[Proof of Lemma~\ref{lem:almost-path-cover}]
  Take $s \in \N$ sufficiently large and
  $\eps$ sufficiently small,
   in particular so that
  $\eps < \min\{\gamma^2/64, \rho/16\}$
  and $\Lambda :=
  \binom{s}{d}(\sqrt{\eps} + e^{-\Omega(\gamma^2 s)})$ is small
  enough, namely $\Lambda \leq \rho/100$.
  Pick $t_0$ large enough so that $t_0 \geq s / \Lambda $.
  Let $\eta =
  \eta_{\ref{lem:reg-lem-partition-min-degree}}(\eps, \delta_d+\gamma, t_0)$,
  $b = b_{\ref{lem:reg-lem-partition-min-degree}}(\eps, \delta_d+\gamma, t_0)$,
  and $T = T_{\ref{lem:reg-lem-partition-min-degree}}(\eps, \delta_d+\gamma,
  t_0)$.

  Since $p = \omega(n^{-2}\log{n})$, by Lemma~\ref{lem:hnp-is-upper-uniform}
  $\Hnp$ is w.h.p.\ $(\eta/2,b,p)$-upper-uniform, and so $G$ is
  $(\eta,b,p)$-upper-uniform then. Therefore, we can apply
  Lemma~\ref{lem:reg-lem-partition-min-degree} to $G$ with $\delta_d + \gamma$
  (as $\delta$), to obtain an $(\eps,p)$-regular partition $(V_i)_{i \in [t]}$
  for some $t_0 \leq t \leq T$ and a corresponding reduced graph $\cR =
  \cR((V_i)_{i \in [t]}, \eps, p, 2\eps)$. In particular, we get that for all
  but at most $\sqrt{\eps}\binom{t}{d}$ $d$-sets $\cS \subseteq V(\cR)$, it
  holds that $\deg_{\cR}(\cS) \geq (\delta_d + \gamma/2)\binom{t-d}{3-d}$. Let
  us remove from each $V_i$ at most two vertices to get that they are all of
  even size $\floor{n/t} - 1 \leq m \leq \ceil{n/t}$ (with slight abuse of
  notation we still refer to them as $V_i$).

  Let $R_1, \dotsc, R_{t/s}$ be a partition of $V(\cR)$ into disjoint subsets of
  size $s$, chosen uniformly at random. Recall that $\Lambda :=
  \binom{s}{d}(\sqrt{\eps} + e^{-\Omega(\gamma^2 s)})$. By
  Lemma~\ref{lem:random-set-almost-all-degree-inh}, with positive probability
  all but a $\Lambda$-fraction of the subgraphs $\cR_i := \cR[R_i]$ have minimum
  $d$-degree at least $(\delta_d+\gamma/4)\binom{s-d}{3-d}$. As we have picked
  $s$ to be even and large, each $\cR_i$ with this minimum degree has a loose
  Hamilton cycle by Theorem~\ref{thm:dense-graph}. The vertices not covered by
  loose cycles in $\mathcal{R}$ are then at most $\Lambda t + s$. We next show
  how to cover each loose cycle in $\mathcal{R}$ with not too many loose paths
  in $G$.

  Consider a (loose) Hamilton cycle in some $\cR_i$ and suppose without loss of
  generality that the clusters corresponding to the vertices of $\mathcal{R}_i$
  are $V_1, \dotsc, V_s$ in the order in which they appear on the cycle. Then we
  know that $(V_i, V_{i+1}, V_{i+2})$ is $(\eps,p)$-regular with density at
  least $2\eps p$, for every $i \in [s-1]$, $i$ odd ($s + 1$ is identified with
  $1$). Split every $V_i$, for $i$ odd, arbitrarily into $V_i = V_i^1 \cup
  V_i^2$, each of size $m/2$. By the definition of a regular triple, for all
  $X_i \subseteq V_i^1$, $X_{i+1} \subseteq V_{i+1}$, and $X_{i+2} \subseteq
  V_{i+2}^2$, each of size $\eps m$, we have
  \[
    d_G(X_i, X_{i+1}, X_{i+2}) \geq d_G(V_i, V_{i+1}, V_{i+2}) - \eps p \geq
    \eps p > 0,
  \]
  and in particular $e_G(X_i, X_{i+1}, X_{i+2}) > 0$.
  Lemma~\ref{lem:covering-regular-triples} implies there is a loose $uv$-path in
  $G[V_i^1, V_{i+1}, V_{i+2}^2]$ with $u,v \in V_i^1 \cup V_{i+2}^2$,
  which is of
  length $2 \cdot m/2 - 4\eps m = (1-4\eps)m$ and thus uses all but at most
  $8\eps m$ vertices in $V_i^1 \cup V_{i+1} \cup V_{i+2}^2$.

  Repeating this for every odd $i \in [s]$, we get $s/2$ loose paths covering
  all but at most $8\eps m \cdot s/2$ vertices in $V_1, \dotsc, V_s$. The whole
  thing can independently be repeated for every $\cR_i$ as well. In total, and
  counting vertices as paths of length zero, we found at most
  \[
    t/s \cdot s/2 \cdot (1 + 8\eps m) + (\Lambda t + s) \cdot m + 2t \leq t
    \cdot 8\eps n/t + 2 \Lambda n + o(n/t) \leq \rho n
  \]
  loose paths that cover the vertex set of $G$.
\end{proof}

\section{Connecting Lemma}\label{sec:connecting-lemma}

In this section we prove a vital ingredient both for constructing absorbers and
independently as a part of the proof of Theorem~\ref{thm:main-theorem}. Roughly
speaking, we show that one can connect prescribed pairs of vertices with short
paths through a set of vertices under some degree assumptions. Given a set $W
\subseteq V(G)$ in a graph $G$, an integer $\ell$, and a set of pairs
$\{x_i,y_i\}_i$ from $V(G) \setminus W$, a
\emph{$(\{x_i,y_i\}_i,W,\ell)$-matching} is a collection of internally
vertex-disjoint paths $P_i$, where each $P_i$ is of length at most $\ell$, has
$x_i, y_i$ as endpoints, and its remaining vertices belong to $W$.

\begin{lemma}[Connecting Lemma]\label{lem:connecting-lemma}
  Let $d \in \{1,2\}$. For every $\gamma, \xi > 0$ there exist $\eps, C > 0$
  such that w.h.p.\ $\Hnp$ satisfies the following, provided $p \geq C\log n
  \cdot \max\{n^{-3/2}, n^{-3+d}\}$. Let $G \subseteq \Hnp$ and $U, W \subseteq
  V(G)$ be disjoint subsets such that
  \begin{itemize}
    \item $|W| \geq \xi n$,
    \item all $d$-sets $S \subseteq U$, $S \subseteq W$ have $\deg_G(S, W) \geq
      (\delta_d+\gamma)p\binom{|W|-d}{3-d}$.
  \end{itemize}
  Then, for every family of distinct pairs $\{x_i,y_i\}_{i \in [t]}$ in $U$ such
  that $t \leq \eps|W|$ and every $u \in U$ appears in at most two pairs, there
  exists a $(\{x_i,y_i\}_i,W,4)$-matching in $G$.
\end{lemma}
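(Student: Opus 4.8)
The plan is to prove the Connecting Lemma by a greedy argument: connect the pairs one at a time, each time using only a tiny fraction of the budget set $W$, and show that a suitable path of length at most $4$ always exists because the relevant degree conditions are still satisfied after deleting the $O(t)$ vertices used so far. The length-$4$ bound is crucial: a loose path of length $4$ has $9$ vertices, so it has room for $7$ interior vertices in $W$; more importantly, the parity/structure of loose paths forces $x_i$ and $y_i$ to sit at the two ends, and to connect them we really want to find one "central" edge through $W$ whose two end-vertices each extend, by one more edge through $W$, to a vertex that closes an edge with $x_i$ (respectively $y_i$). This is exactly the kind of two-step expansion that Lemma~\ref{lem:large-second-neighbourhood} provides: from $x_i$ (which has large degree into $W$) one gets a set $\cP_{x_i}$ of $\sqrt{|W|}$ disjoint pairs closing an edge with $x_i$, and a linear-sized family $\cF_{x_i}$ of $4$-tuples reachable from those pairs by one further edge; similarly for $y_i$. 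A length-$4$ path from $x_i$ to $y_i$ through $W$ then corresponds to choosing compatible (disjoint) elements of $\cF_{x_i}$ and $\cF_{y_i}$ that meet in a common central edge — and since both families are linear in size while only $O(t) = O(\eps|W|)$ vertices have been removed, a greedy/counting step finds such a configuration.

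Concretely, I would proceed as follows. First, fix $\eps$ small (depending on $\gamma,\xi$ and on the constant $\xi_{\ref{lem:large-second-neighbourhood}}$ coming from Lemma~\ref{lem:large-second-neighbourhood} applied with parameter $\gamma' = \gamma/2$, say) and $C$ large enough that all the quoted w.h.p.\ properties hold: Lemma~\ref{lem:large-second-neighbourhood}, Lemma~\ref{lem:robust-degree-inh} / Lemma~\ref{lem:robust-codegree-inh} (to control how degrees into $W$ degrade under vertex deletions), and the edge-distribution lemmas of Section~2.1. Process the pairs $\{x_i,y_i\}$ in order $i = 1,\dots,t$. Maintain the set $W_i \subseteq W$ of still-available budget vertices; initially $W_0 = W$, and at each step we will use at most $7$ vertices, so $|W \setminus W_i| \le 7t \le 7\eps|W|$ throughout, which is negligible. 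At step $i$, both $x_i$ and $y_i$ have $\deg_G(S, W) \ge (\delta_d+\gamma)p\binom{|W|-d}{3-d}$ by hypothesis (for $d=1$ this is a $1$-degree condition on single vertices; for $d=2$ we use it on pairs, and here we additionally need the robust-codegree property to ensure that $x_i$ and $y_i$ retain good codegree into $W_i$ after deletions — this is where the $d=2$ case needs a little extra care, since only almost all pairs keep their codegree, but the pairs $\{x_i,\cdot\}$ we care about are few, so we can route around the bad ones). In particular $\deg_G(x_i, W_i) \ge (\delta_d + \gamma/2)p\binom{|W_i|-d}{3-d} \ge \gamma' p\binom{n-1}{2}$, and likewise for $y_i$, so Lemma~\ref{lem:large-second-neighbourhood} applies inside $G[\{x_i\} \cup W_i]$ and inside $G[\{y_i\} \cup W_i]$, producing $\cF_{x_i}, \cP_{x_i}$ and $\cF_{y_i}, \cP_{y_i}$ with $|\cF_{x_i}|, |\cF_{y_i}| \ge \xi_{\ref{lem:large-second-neighbourhood}} n$.

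Next comes the matching-up step. A $4$-tuple $(w_1,w_2,w_3,w_4) \in \cF_{x_i}$ witnesses that there is a pair $\{u,v\} \in \cP_{x_i}$ with $x_iuv, uw_1w_2, vw_3w_4 \in E(G)$; so the pair $\{w_2,\dots\}$ — more precisely the two vertices $w_1,w_3$ — are the "loose ends" through which this partial structure can be continued (the loose path so far is $w_2 w_1 u v w_3 w_4$ with $x_i$ hanging off... let me instead describe it cleanly in the write-up). The point is: each element of $\cF_{x_i}$ gives a loose $x_i$-to-$z$ path of length $2$ through $W_i$ with a prescribed endpoint $z$, and the set of achievable endpoints $z$ has size $\Omega(n)$. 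Symmetrically, $\cF_{y_i}$ gives $\Omega(n)$ achievable endpoints $z'$ for a length-$2$ loose path from $y_i$. To stitch the two length-$2$ pieces into a single length-$4$ loose $x_iy_i$-path we need one more edge $z z'' z'$ joining a good $x_i$-endpoint to a good $y_i$-endpoint, with $z,z',z''$ all distinct and in $W_i \setminus (\text{used vertices})$. Since the candidate sets on each side have size $\Omega(n)$ and $W_i$ is huge, this is exactly a "does an edge exist between two linear sets through $W$" question, handled by the degree condition (every $d$-set in $W$ has large degree into $W$) together with the edge-distribution bounds of Section~2.1 — e.g.\ Lemma~\ref{lem:general-edge-concentration} rules out the candidate configurations being too sparse, and one extracts a valid central edge avoiding the $O(t)$ forbidden vertices. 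Having found it, record the $\le 7$ new vertices as unavailable and move on.

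The main obstacle, and the place to be careful, is the matching-up step for the $d=2$ case and the bookkeeping of disjointness: Lemma~\ref{lem:large-second-neighbourhood} guarantees $\cF_{x_i}$ and $\cP_{x_i}$ are \emph{pairwise} disjoint, but when we combine the $x_i$-side with the $y_i$-side we must ensure the chosen $4$-tuples and the central edge are mutually disjoint, and disjoint from the $O(\eps n)$ vertices used in previous steps. Because each $\cF$ has linear size while the forbidden set has size $O(\eps n)$ with $\eps$ as small as we like, a simple averaging argument shows a positive fraction of each family survives, and then the existence of the central edge follows from the degree hypothesis on $d$-sets in $W$; so the obstacle is real but surmountable by choosing $\eps$ small relative to $\xi_{\ref{lem:large-second-neighbourhood}}$ and the other constants. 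I would also remark that the "every $u \in U$ appears in at most two pairs" hypothesis is used only to guarantee that the endpoints $x_i, y_i$ themselves are never consumed as interior vertices of another path and that their degree hypotheses are unaffected by the processing, so no real work is needed there.
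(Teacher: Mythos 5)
Your proposal replaces the paper's Haxell-based argument with a greedy one: process pairs one at a time, invoke Lemma~\ref{lem:large-second-neighbourhood} at each step, and stitch the two resulting ``2-step'' pieces with a central edge. The overall idea of using Lemma~\ref{lem:large-second-neighbourhood} is consistent with the paper (which also uses it, but only to produce the sets $\cP_{x_i}$), and the paper likewise ends up picking a central vertex in $W$ that links the two sides. However, the key step of your plan — finding a central edge between the two sides — is not justified, and this is exactly where the value $\delta_1 = 7/16$ must enter and where the paper does real work. Lemma~\ref{lem:large-second-neighbourhood} only gives $|\cF_{x_i}| \geq \xi' n$ for a small fixed $\xi' = \xi_{\ref{lem:large-second-neighbourhood}}(\cdot)$; so the sets of achievable endpoints on the $x_i$- and $y_i$-sides are each $\Omega(n)$ with a tiny constant, and such sets need not overlap, nor does the degree condition $\delta_1(G[W]) \geq (7/16+\gamma)p\binom{|W|-1}{2}$ guarantee an edge with one vertex in each: an adversary can have all of a vertex's edges into $W$ avoid a fixed small-constant-fraction subset. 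What the paper actually shows (conditions \ref{conn-exp-1}--\ref{conn-exp-2}) is much stronger: it builds $S_1 \subseteq N_G(x_i, \tilde W_1)$ of size $\sqrt{n}$ from $\cP_{x_i}$, defines the auxiliary $2$-graph $G_1$ on $\tilde W_2$, and proves via a counting argument using the edge-concentration lemmas that the set $\{v : \deg_{G_1}(v) \geq 2\}$ has size at least $\tfrac{\sqrt{7}}{4}\tilde s$. Since $\sqrt{7}/4 > 1/2$, the analogous set on the $y_i$-side must intersect it, and a shared vertex $v$ (with two distinct neighbours $u,w$ coming from the degree-$\geq 2$ guarantee) closes the path. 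This overlap argument is precisely where $(\sqrt 7/4)^2 = 7/16 = \delta_1$ is tight; your proof never engages with this constant, so the central step would fail if $\delta_1$ were, say, $1/4$, meaning the argument as written is not correct.

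Two smaller problems. First, your stitching (two length-$2$ pieces joined by a fresh central edge $z z'' z'$) yields a loose path of length $5$, not at most $4$; the paper gets length $4$ by having the two length-$2$ pieces share their terminal vertex $v$, not by inserting an additional edge. Second, the greedy framework itself is delicate for $d = 2$: a fixed pair $\{x_i,y_i\}$ has codegree only $\Theta(\log n)$ into $W$ when $p = \Theta(\log n / n)$, while the set deleted by earlier steps has size $\Theta(\eps n)$, and nothing in your plan rules out that a particular pair has essentially all of its $W$-codegree concentrated inside that set; the set-of-pair concentration lemmas (Lemma~\ref{lem:2-edge-concentration}) do not apply to a single pair, and Lemma~\ref{lem:robust-codegree-inh} only says \emph{most} pairs survive, which does not help when you must handle exactly the pair in front of you. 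Haxell's condition (Theorem~\ref{haxell}), as used by the paper, circumvents both of these issues by quantifying over all subsets $\cI \subseteq [t]$ and all forbidden sets $Q$ simultaneously, so that one never has to control a single pair's fate against a specific deleted set.
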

\begin{proof}
  Given $\xi$ and $\gamma$, let $\tilde\gamma, \eps > 0$ be sufficiently small
  and $C > 0$ sufficiently large for the arguments below to go through.
  Condition on $\Hnp$ having the properties of
  Lemma~\ref{lem:1-edge-concentration}, Lemma~\ref{lem:2-edge-concentration},
  Lemma~\ref{lem:general-edge-concentration},
  Lemma~\ref{lem:large-second-neighbourhood} for $\tilde\gamma$ (as $\gamma$),
  and Lemma~\ref{lem:random-set-degree-inh}, which happens with high
  probability.

  Consider an auxiliary hypergraph $\cH$ (an $8$-graph in case $d=1$ and a
  $2$-graph in case $d=2$) with vertex set $[t] \cup W$ in which for $Z
  \subseteq W$, with $|Z| = 7$ if $d=1$ and $|Z|=1$ if $d=2$, an edge $\{i\}
  \cup Z$ exists if and only if there is a $x_iy_i$-path in $G$ all whose
  internal vertices belong to $Z$. Hence, a $[t]$-saturating matching in $\cH$
  corresponds to a $(\{x_i,y_i\}_i, W, 4)$-matching in $G$. Our plan is to use
  Haxell's matching theorem (Theorem~\ref{haxell}) to show this graph contains
  such a matching. It is sufficient to, for every $\cI \subseteq [t]$ and $Q
  \subseteq W$ of size $|Q| \leq 16|\cI|$, find an $i \in \cI$ and
  an $x_iy_i$-path in $G$ whose internal vertices all belong to $W \setminus Q$.
  Indeed, this implies there is an edge in $\cH$ that intersects $\cI$ but does
  not intersect $Q$, and so the condition in Theorem~\ref{haxell} is satisfied.
  We treat the cases $d=1$ and $d=2$ separately, starting with $d=2$.

  \paragraph{Codegree, $d = 2$.} Recall, $\delta_2 = 1/4$. Consider any $\cI
  \subseteq [t]$ and $Q \subseteq W$ of size $|Q| \leq 16|\cI|$. Let $\cP$ be a
  maximal set of pairs $\{x_i,y_i\}$ with $i \in \cI$ and all vertices distinct
  and assume towards a contradiction there is no $x_iy_i$-path of length at most
  four in $G$ with all internal vertices belonging to $W \setminus Q$. By the
  assumption of the lemma we have $|\cP| \geq |\cI|/3$. If $|\cP| \leq
  \eps^{-3}\log n/p$, using on the one hand the minimum codegree assumption and
  on the other the property of
  Lemma~\ref{lem:2-edge-concentration}~\ref{lem:2-edge-concentration-p1}, we
  obtain
  \[
    |\cP| \cdot (1/4+\gamma)p(|W|-2) \leq e_G(\cP, Q) \leq 48 \eps^{-4}|\cP|\log n,
  \]
  which is a contradiction for $C$ large enough, as $|W|p/8 \geq (\xi/8) \cdot
  C\log n \geq 48\eps^{-4}\log n$. Otherwise, if $|\cP| \geq \eps^{-3}\log n/p$,
  then by the property of
  Lemma~\ref{lem:2-edge-concentration}~\ref{lem:2-edge-concentration-p2},
  \[
    |\cP| \cdot (1/4+\gamma)p(|W|-2) \leq e_G(\cP, Q) \leq (1+\eps) 48|\cP|^2p,
  \]
  by taking a superset of $Q$ of size $16|\cI| \leq 48|\cP|$, leading to a
  contradiction as $|W|/8 \geq t/(8\eps) > 96|\cP|$.

  \paragraph{Degree, $d = 1$.} Recall, $\delta_1 = 7/16$. Let $W = W_1 \cup W_2
  \cup W_3$ be an equipartition of $W$ in which each $v \in U \cup W$, for
  $\tilde s = |W|/3$, satisfies
  \begin{equation}\label{eq:conn-min-deg}
    \deg_G(v, W_i) \geq (7/16+\gamma/2)p\binom{\tilde s - 1}{2}
  \end{equation}
  for all $i$. From the assumptions of the lemma, it is straightforward to show
  that such a partition exists by making use of the property of
  Lemma~\ref{lem:random-set-degree-inh} and the union bound.

  Consider any $\cI \subseteq [t]$ and $Q \subseteq W$ of size $|Q| \leq
  16|\cI|$. Assume that $\cI$ contains indices $i$ so that the collection
  $\{x_i, y_i\}$ consists of distinct vertices (a maximal subset of such $i$
  comprises at least a third of the original $\cI$ which has no influence on the
  proof). The $x_iy_i$-path we are trying to find is going to be such that it
  intersects each $W_1, W_3$ in exactly two vertices, and $W_2$ in exactly three
  vertices.

  Let $\tilde W_i := W_i \setminus Q$. It is sufficient to show that there is an
  index $i \in \cI$ for which:
  \stepcounter{propcnt}
  \begin{alphenum}
    \item\label{conn-exp-1} There exist $S_1 \subseteq N_G(x_i, \tilde W_1)$ and
      $S_3 \subseteq N_G(y_i, \tilde W_3)$, both of size $\sqrt{n}$;
    \item\label{conn-exp-2} Let $G_1$ and $G_3$ be auxiliary $2$-graphs on the
      same vertex set $\tilde W_2$ and $uv \in E(G_i)$ if $\{u, v\}$ form an
      edge in $G$ with a vertex from $S_i$, $i \in \{1, 3\}$. Then for each $i
      \in \{1, 3\}$
      \[
        \big|\{v \in \tilde W_2 : \deg_{G_i}(v) \geq 2\}\big| \geq
        \frac{\sqrt{7}}{4}\tilde s.
      \]
  \end{alphenum}

  Before proving these statements, let us show how to complete the proof. If we
  were to find three vertices $u, v, w \in \tilde W_2$ so that $e_G(S_1, u, v) >
  0$ and $e_G(S_3, w, v) > 0$, this would close a $x_iy_i$-path as desired. A
  path of length two in $G_1 \cup G_3$ with one edge in each of $G_1, G_3$
  corresponds exactly to a triple $u, v, w \in \tilde W_2$ as above. By
  \ref{conn-exp-2} and the pigeonhole principle, there must be a $v \in \tilde
  W_2$ with $\deg_{G_1}(v), \deg_{G_3}(v) \geq 2$. This implies we can choose
  $u$ as one of $v$'s neighbours in $G_1$ and $w$ as one of $v$'s neighbours in
  $G_3$ such that $u \neq w$.

  We first show there is an $i \in \cI$ for which \ref{conn-exp-1} holds. For
  this, it is sufficient to show that more than half of the indices $i \in \cI$
  are such that $\deg_G(x_i, \tilde W_1) \geq \tilde\gamma p\binom{n-1}{2}$. The
  existence of $i \in \cI$ and the corresponding sets $S_1$ and $S_3$ as in
  \ref{conn-exp-1} follows from the pigeonhole principle and the property of
  Lemma~\ref{lem:large-second-neighbourhood}.

  Let $X$ be a set of vertices $x_i$, $i \in \cI$, with $\deg_G(x_i, \tilde W_1)
  < \tilde\gamma p\binom{n-1}{2}$ and suppose $|X| \geq |\cI|/2$. Owing to the
  assumption on the minimum degree \eqref{eq:conn-min-deg} this means
  \begin{equation}\label{eq:conn-step-1-ub}
    e_G(X, Q, V(G)) \geq |X|\Big((7/16+\gamma/2)p\binom{\tilde s - 1}{2} -
    \tilde\gamma p\binom{n-1}{2}\Big) \geq 0.001\xi^2|\cI|n^2p.
  \end{equation}

  If $|Q|, |X| \leq \eps^{-3}\log n/(np)$, then the property of
  Lemma~\ref{lem:1-edge-concentration}~\ref{lem:1-edge-concentration-p1} implies
  \[
    e_G(X, Q, V(G)) \leq \eps^{-4} \max\{|X|, |Q|\} \log n = o(|\cI|n^2p).
  \]
  Otherwise, if $\max\{|Q|, |X|\} \geq \eps^{-3}\log n/(np)$ then by the
  property of
  Lemma~\ref{lem:1-edge-concentration}~\ref{lem:1-edge-concentration-p2}
  \[
    e_G(X, Q, V(G)) \leq 2\max\{|X||Q|np,48\eps^{-3}|\cI|\log n\},
  \]
  where we take a superset of $X$ or $Q$ of size exactly $\eps^{-3}\log n/(np)$
  if necessary. This again leads to a contradiction with
  \eqref{eq:conn-step-1-ub} since $|X||Q| \leq 48\eps|\cI|n$ and by our choice
  of $\eps$ (with room to spare).

  For \ref{conn-exp-2}, let
  \[
    \cP:= E(G_1), \quad S_2 := \{ u \in \tilde W_2 : \deg_{G_1}(u) \geq 2 \},
    \quad \text{and} \quad \cP_1 := \big\{ \{u,v\} \in \cP : u \notin S_2
    \big\}.
  \]
  Assume for contradiction $|S_2| < \sqrt{7}\tilde s/4$. Note that, by the
  property of Lemma~\ref{lem:general-edge-concentration} and taking a superset
  of $Q$ of size $48\eps n$ if necessary,
  \begin{equation}\label{eq:conn-general-conc}
    e_G(S_1, Q, V(G)) \leq 48(1+\eps)|S_1|\eps n^2p.
  \end{equation}
  By \eqref{eq:conn-min-deg} and for $\eps$ sufficiently small, we thus get
  \begin{equation}
    \label{eq:conn-last-step-lower-bound}
    e_G(S_1, \cP) \osref{\eqref{eq:conn-min-deg}}\geq
    |S_1|(7/16+\gamma/2)p\binom{\tilde s-1}{2} - e_G(S_1, Q, V(G))
    \osref{\eqref{eq:conn-general-conc}}\geq |S_1|(7/16+\gamma/4)p\binom{\tilde
    s}{2}.
  \end{equation}
  On the other hand, noting that $|\cP_1| \leq n$, by the property of
  Lemma~\ref{lem:2-edge-concentration} and taking supersets of $\cP_1$ and $S_1$
  if necessary,
  \[
    e_G(S_1, \cP_1) \leq
    \max\{\eps^{-4}|\cP_1|\log n, (1+\eps)n^2p \} = o(|S_1|p\tilde s^2)
  \]
  and so by the property of Lemma~\ref{lem:general-edge-concentration} again
  \[
    e_G(S_1, \cP) = e_G\Big(S_1, \binom{S_2}{2}\Big) + e_G(S_1, \cP_1) \leq
    (1+\eps)|S_1|p \binom{\sqrt{7}\tilde s/4}{2} + o(|S_1|p\tilde s^2),
  \]
  which is smaller than $|S_1|(7/16+\gamma/4)p\binom{\tilde s}{2}$ by our choice
  of $\eps$, thereby contradicting \eqref{eq:conn-last-step-lower-bound}.
\end{proof}

\section{The absorbing method in sparse hypergraphs}\label{sec:absorbing-method}

The absorbing method was initially explicitly introduced by R\"{o}dl,
Ruci\'{n}ski, and Szemer\'{e}di~\cite{rodl2006dirac} even though the implicit
idea has its roots in the works of Krivelevich~\cite{krivelevich1997triangle}
and Erd\H{o}s, Gy\'{a}rf\'{a}s, and Pyber~\cite{erdHos1991vertex}. Recently it
has seen a surge of interest and has been used in a variety of settings:
combinatorial designs~\cite{glock2023existence, keevash2014existence},
decompositions~\cite{glock2021resolution, kuhn2013hamilton}, Steiner
systems~\cite{kwan2022high, ferber2020almost}, Ramsey
theory~\cite{bucic2021tight, keevash2021cycle}, colouring
(hyper)graphs~\cite{montgomery2021proof, kang2022proof},
embeddings~\cite{montgomery2019spanning, georgakopoulos2021spanning}, and many,
many more.

In principle, the idea behind it is simple. It relies on reducing the problem of
finding a spanning subgraph $\cS$ in some graph $G$ to the one of finding an
almost spanning subgraph $\cS' \subseteq \cS$, say, one of size
$(1-o(1))|V(G)|$. Often times, the latter is significantly easier to solve, if
nothing else, just for the fact that we have quite a bit of room for error. In
practice, the implementation of this idea typically depends on first embedding a
highly structured graph $A$ into $G$, which is capable of extending any partial
embedding to a complete one. The task of designing the graph $A$ with this
magical property is where the whole \emph{art of absorption} lies in. Usually,
it is specific to the embedding problem at hand and is where the main difficulties
arise---it can be, first, quite surprising that such a graph should even exist
and, second, challenging to find it in the host graph $G$ in a convenient way
(or in any way for that matter).

When dealing with paths, the `design' of the graph $A$ is not too complex. We
make this more rigorous.

\begin{definition}[$(a,b,R)$-absorber]\label{def:absorbing-graph}
  An $(a,b,R)$-absorber is a graph $A$ with $R \subseteq V(A)$, $a,b \in V(A)
  \setminus R$, with the property that for every $R' \subseteq R$ with $|R'| <
  |R|/2$ such that $V(A) \setminus R'$ has odd cardinality, there is a loose
  $ab$-path in $A$ with vertex set $V(A) \setminus R'$.
\end{definition}

The next lemma handles the second step of the method: actually finding the
graph $A$ in the subgraph $G$ of the random graph $\Hnp$.

\begin{lemma}[Absorbing Lemma]\label{lem:absorbing-lemma}
  Let $d \in \{1,2\}$. For every $\gamma, \xi > 0$ there exist $\alpha, C > 0$
  such that w.h.p.\ $\Hnp$ has the following property, provided that $p \geq
  C\log n \cdot \max\{n^{-3/2}, n^{-3+d}\}$. Let $G \subseteq \Hnp$ with
  $\delta_d(G) \geq (\delta_d+\gamma)p\binom{n-d}{3-d}$. Then, for a uniform
  random set $R \subseteq V(G)$ of size $\alpha n$, w.h.p.\ there exists an
  $(a,b,R)$-absorber in $G$ of order at most $\xi n$.
\end{lemma}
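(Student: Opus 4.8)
The plan is to first construct, for a \emph{typical} pair of vertices $a,b \in V(G)$, a single small gadget — call it an \emph{$(a,b)$-absorbing path} — that absorbs exactly one vertex $r$ (or a bounded number of vertices), and then to glue $\Theta(\alpha n)$ of these gadgets together in series using the Connecting Lemma (Lemma~\ref{lem:connecting-lemma}), so that the resulting graph $A$ is an $(a,b,R)$-absorber where $R$ is the set of absorbable vertices. The key structural observation is the switcher from Lemma~\ref{lem:switcher-dense-graph}: if a vertex $r$ together with two ``ports'' $u,v$ satisfies the right local degree conditions, then there are vertices $a_1,a_2,a_3,a_4$ giving a loose cycle on $\{r,u,v,a_1,a_2,a_3,a_4\}$; deleting $r$ from this cycle still leaves a loose path from (appropriate endpoints near) $u$ to $v$ on the remaining vertices, while keeping $r$ uses all of them — this is precisely the ``absorb or skip'' dichotomy on a constant-size piece. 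So each elementary absorber for $r$ has two designated endpoints $p_r, q_r$, constant size, and works regardless of whether $r \in R'$ or not.

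The main technical point — and the reason the paper invokes the Ferber--Kwan contraction trick and the sparse regularity machinery — is that one cannot directly ``find'' these switchers greedily for $\Theta(n)$ different vertices $r$ inside a sparse $G$, because the switcher lemma is a \emph{dense} statement (it needs $\delta_1 \ge \frac{7}{16}\binom{n}{2}$-type degree). The route I would take: apply Lemma~\ref{lem:reg-lem-partition-min-degree} to $G$ (with the partition classes chosen so that $R$-vertices and the ambient vertices are separated, using the ``where the clusters lie'' refinement), pass to the reduced graph $\cR$, which inherits minimum $d$-degree $\ge (\delta_d + \gamma/2 - o(1))\binom{t-d}{3-d}$. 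Now in $\cR$ — a \emph{dense} hypergraph — use Theorem~\ref{thm:dense-graph}-type reasoning, or rather the switcher Lemma~\ref{lem:switcher-dense-graph} applied within $\cR$, to locate, for a positive fraction of reduced vertices, a constant-size ``template'' absorbing configuration in $\cR$. Then embed each such template back into $G$ by the KŁR theorem (Theorem~\ref{thm:KLR}): each template is a fixed linear $3$-graph $H$, the relevant triples are $(\eps,p)$-regular with density $\ge 2\eps p$, so there are $\ge \xi' n^{v(H)} p^{e(H)}$ canonical copies — enough to find one copy per reduced vertex greedily, all vertex-disjoint, with the absorbable vertex $r$ placed in the designated cluster. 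This yields $\Theta(\alpha n)$ vertex-disjoint elementary absorbers, one per vertex of $R$ (after arguing, via Lemma~\ref{lem:random-set-degree-inh} / Lemma~\ref{lem:large-second-neighbourhood}, that a random $R$ of size $\alpha n$ w.h.p.\ has all its vertices ``absorbable'' in this sense).

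Having the pool of elementary absorbers, the assembly step is routine: order them $A_1, \dots, A_m$ with endpoint pairs $(p_i, q_i)$, designate $a = p_1$, $b = q_m$, collect the pairs $\{q_i, p_{i+1}\}$ for $i \in [m-1]$, and apply the Connecting Lemma (with $W$ a reserve set of size $\ge \xi n$ disjoint from everything so far, which exists since the absorbers use only $O(\alpha n) = o(n)$ vertices and $G$ still has the required degree into $W$ by Lemma~\ref{lem:robust-degree-inh}/\ref{lem:robust-codegree-inh}) to find internally-disjoint short connecting paths $\{q_i, p_{i+1}\}$. The concatenation $A$ is then an $(a,b,R)$-absorber: for any $R' \subseteq R$ with $|R'| < |R|/2$ and $V(A) \setminus R'$ odd, in each $A_i$ route the path to skip $r_i$ if $r_i \in R'$ and to include it otherwise — the parity constraint on $|R'|$ versus $|R|$ ensures the total vertex count works out to an odd path (one uses here that the elementary absorber's ``skip'' and ``absorb'' routes differ in length by an even amount, so global parity is controlled, possibly after including an unconditional $O(1)$ number of flexible single-vertex absorbers to fix parity). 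The order of $A$ is $O(\alpha n) \le \xi n$ for $\alpha$ chosen small relative to $\xi$. The step I expect to be genuinely delicate is making the contraction/KŁR embedding produce \emph{vertex-disjoint} absorbers covering \emph{all} of a random $R$ simultaneously — in particular ensuring the designated-cluster placement of each $r \in R$ is compatible with regularity and with disjointness across the $\Theta(n)$ copies — and correctly tracking the parity bookkeeping between the two routing modes of each elementary gadget.
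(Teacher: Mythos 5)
Your high-level strategy (constant-size gadgets, assembled via the Connecting Lemma, built through regularity) is in the right spirit, but there are two genuine gaps, one structural and one technical.

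The structural gap is the gadget design. A loose path always has an odd number of vertices, so a gadget with designated endpoints $p,q$ cannot offer both an ``absorb $r$'' route and a ``skip $r$'' route as loose $pq$-paths on vertex sets differing by exactly one vertex---one of the two counts would be even. For this reason the paper's $xy$-absorber $A^d_{xy}$ absorbs a \emph{pair} $\{x,y\}$: its covering and non-covering paths (Definition~\ref{def:absorber}) differ by exactly two vertices, preserving parity. Absorbing pairs in turn creates a choice problem: with a fixed pairing of $R$ one cannot handle an $R'\subseteq R$ that contains only one vertex of some pair. This is precisely what the Montgomery-style template graph (Lemma~\ref{lem:template}) fixes: a bounded-degree graph $T$ on a superset of $R$ such that after deleting any small $R'$ of the right parity, $T-R'$ has a perfect matching; one $xy$-absorber is built per edge of $T$, and the matching dictates which gadgets use their covering path. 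Your proposal has no template, hence no mechanism for deciding, given $R'$, which gadgets to activate, and the ``$O(1)$ flexible single-vertex absorbers to fix parity'' cannot exist for the same parity reason.

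The technical gap concerns the contraction step for $d=1$. You attribute it to ``the switcher lemma being a dense statement,'' but that by itself is resolved by passing to the reduced graph. The real obstruction is that for $p=\Theta(n^{-3/2}\log n)$, a fixed vertex $x$ has degree $\Theta(n^2p)=\Theta(\sqrt n\log n)$, which is much smaller than any cluster, so the K\L{}R theorem can never place a canonical copy of a fixed $3$-graph through the \emph{specific} vertex $x$---canonical copies live in clusters, not at prescribed vertices. The paper handles this by first expanding $x$ and $y$ to $\Theta(n)$ disjoint $4$-tuples reachable in two hops via Lemma~\ref{lem:large-second-neighbourhood}, contracting these into new vertices to form $G(\cU,\cF)$, and only then applying regularity and K\L{}R to the contracted graph; the canonical copy of the contracted backbone is then unfolded into a genuine backbone of $A^1_{xy}$. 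Your sentence ``embed each such template back into $G$ by the K\L{}R theorem \ldots with the absorbable vertex $r$ placed in the designated cluster'' glosses over exactly the step that fails without the contraction. Finally, note that the paper finds the disjoint absorbers for all template edges \emph{simultaneously} via Haxell's theorem (Theorem~\ref{haxell}) rather than greedily, which is what makes the disjointness across $\Theta(\alpha n)$ copies go through.
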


It may seem awkward that there are \emph{two probabilistic statements} in the
lemma. Indeed, the \emph{first w.h.p.} establishes some typical properties a
random $3$-uniform hypergraph has (such as, e.g., distribution of the edges),
whereas the \emph{second w.h.p.} is over the choice of $R$. Namely, once we
condition on $\Hnp$ having these `nice' properties, then for a randomly chosen
set there is an absorber with high probability. In fact, the lemma could be
written in a quantitative form, saying how w.h.p.\ in $\Hnp$ there are at least,
say, a $(1 - n^{-5})$-fraction of choices for $R$ which yield an absorber. We
found this a bit more cumbersome to deal with and decided to go with the former.

As per usual, the `construction' of such a graph $A$ consists of carefully
patching up many small structures.

\begin{definition}[$xy$-absorber]\label{def:absorber}
  An $xy$-\emph{absorber} is a graph that consists of:
  \begin{itemize}
    \item a path $P$, which we refer to as the \emph{covering path}, and
    \item a path $Q$, which we refer to as the \emph{non-covering path}, with
      $V(Q) = V(P) \setminus \{x,y\}$ and whose endpoints are identical with
      those of $P$.
  \end{itemize}
\end{definition}

We now define the $xy$-absorber we use, depending on $d$ (in doing this we draw
inspiration from \cite{buss2013minimum}).
Both of these, however, have a much more natural visual representation depicted
in Figure~\ref{fig:absorber-1} and Figure~\ref{fig:absorber-2}.

\begin{definition}
  The $xy$-absorber $A_{xy}^d$ is defined as:
  \begin{enumerate}[leftmargin=5em]
    \item[\textbf{($\mathbf{d = 2}$)}] consists of nine vertices $x, y, v_1,
      v_2, v_3, v_4, v_5, v_6, v_7$, and the edges $v_1v_2v_3$, $v_3v_4v_5$,
      $v_5v_6v_7$, $v_2xv_4$, $v_4yv_6$.
      \begin{figure}[!htbp]
        \centering
        \includegraphics[scale=1]{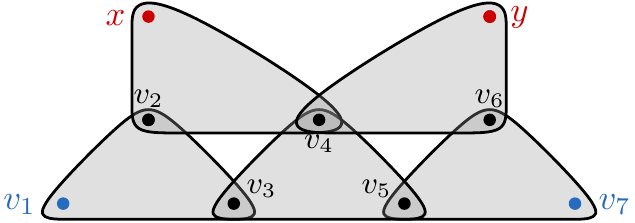}
        \caption{The $xy$-absorber $A_{xy}^2$.}
        \label{fig:absorber-2}
      \end{figure}
    \item[\textbf{($\mathbf{d = 1}$)}] consists of:
      \begin{itemize}[leftmargin=-2em]
        \item a cycle $x_1, x_2, x_3, x, x_4, \dotsc, x_{10}, P_x, x_1$, where
          $P_x$ is an $x_1x_{10}$-path of length four;
        \item a cycle $y_1, y_2, y_3, y, y_4, \dotsc, y_{10}, P_y, y_1$, where
          $P_y$ is a $y_1y_{10}$-path of length four;
        \item a copy of $A_{x_7y_7}^2$;
        \item edges $a_1a_2a_3$, $a_2a_4x_2$, $a_3a_4x_9$ and $b_1b_2b_3$,
          $b_2b_4y_2$, $b_3b_4y_9$;
        \item an $x_5y_5$-path and a $v_7b_1$-path, both of length four.
      \end{itemize}
      \begin{figure}[!htbp]
        \captionsetup[subfigure]{textfont=scriptsize}
        \centering
        \begin{subfigure}{.49\textwidth}
          \centering
          \includegraphics[width=1\linewidth]{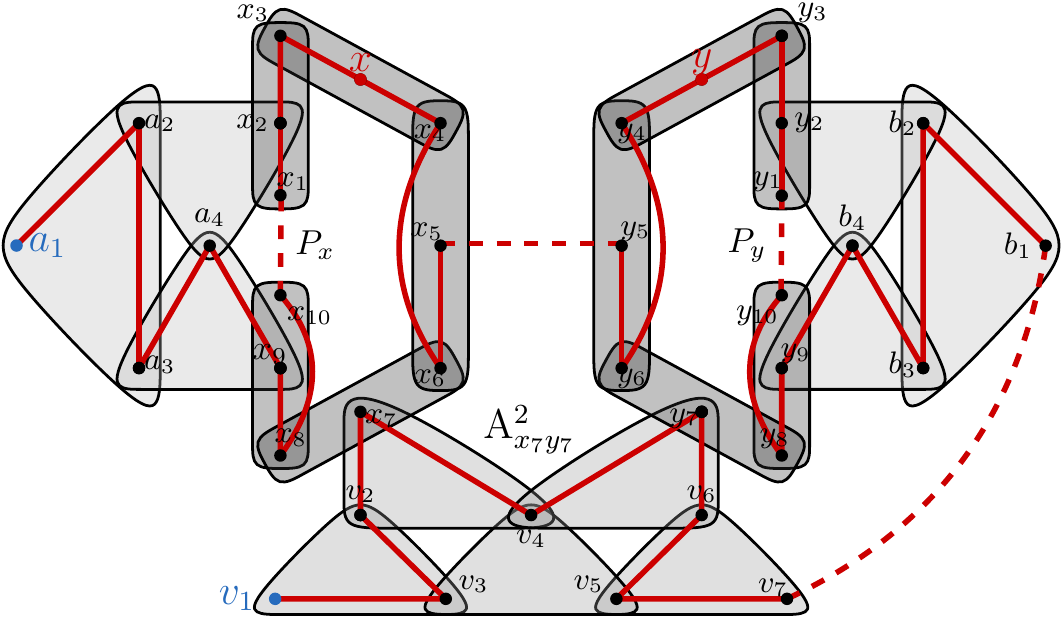}
          \caption{The covering $a_1v_1$-path}
        \end{subfigure}%
        \hfill
        \begin{subfigure}{.49\textwidth}
          \centering
          \includegraphics[width=1\linewidth]{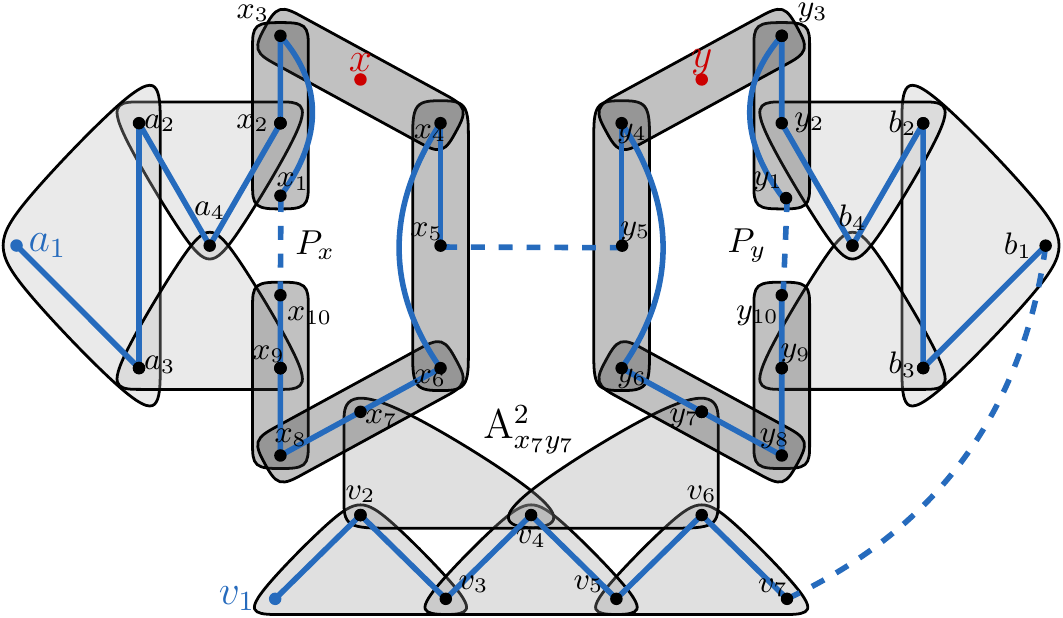}
          \caption{The non-covering $a_1v_1$-path}
        \end{subfigure}%
        \caption{The $xy$-absorber $A_{xy}^1$. The dashed lines represent paths
        of length four.}
        \label{fig:absorber-1}
      \end{figure}
  \end{enumerate}
\end{definition}

For future reference, the graph obtained by removing $P_x$, $P_y$, the
$x_5y_5$-path, and the $v_7b_1$-path is called a \emph{backbone} of the
$xy$-absorber $A_{xy}^1$ (see Figure~\ref{fig:absorber-backbone} below). We
refer to all vertices of $A^d_{xy}$ other than $x,y$ as its \emph{internal
vertices}.

To build an $(a,b,R)$-absorber, we plan to string together a number of copies of
$A_{xy}^d$. Clearly, we cannot just build disjoint $xy$-absorbers for every pair
of vertices $x, y \in R$ for the simple reason of there not being enough space
for that as $\binom{|R|}{2} \gg n$. A way of dealing with this originated in the
work of Montgomery~\cite{montgomery2019spanning}, who used an idea of looking at
an auxiliary \emph{bounded degree} graph, which serves as a template for which
pairs of vertices of $R$ to use. Roughly speaking, there is a graph $T = (R, E)$
on the vertex set $R$ and with $\Delta(T) = O(1)$, so that if for every $xy \in
E(T)$ we find disjoint $xy$-absorbers, then the obtained graph is an
$(a,b,R)$-absorber, for some $a, b \notin R$.

The following lemma is very similar to \cite[Lemma~7.3]{ferber2022dirac}, but it
has the additional property that the template graph it describes has bounded
maximum degree. The proof, being almost identical to that of the mentioned
lemma, is for completeness spelled out in the appendix.

\begin{lemma}\label{lem:template}
  There is an $L > 0$ such that the following holds. For any sufficiently large
  $m$, there exists a graph $T$ with $2m \leq v(T) \leq Lm$, maximum degree at
  most $L$, and a set $Z$ of $m$ vertices, such that for every $Z' \subseteq Z$
  with $|Z'| < m/2$ and $|V(T) \setminus Z'|$ even, the graph $T - Z'$ has a
  perfect matching.
\end{lemma}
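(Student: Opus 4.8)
The plan is to construct $T$ explicitly using a bipartite-expander-based template, following the now-standard approach of Montgomery. First I would take a bipartite graph $H$ on parts $X$ and $Y$, each of size $m$, which is $D$-regular (for some absolute constant $D$) and is a sufficiently good bipartite expander: say, every set $S \subseteq X$ with $|S| \le m/2$ satisfies $|N_H(S)| \ge 2|S|$, and symmetrically for subsets of $Y$ (such $H$ exists for $D$ a large enough absolute constant, e.g.\ a random bipartite $D$-regular graph works w.h.p., or one can quote an explicit construction). The key structural fact I would use is that in such an $H$, for \emph{every} $W \subseteq X$ with $|W|$ even (equivalently, after deleting an even-sized complement from $X$) there is a perfect matching saturating $W$ together with $|W|$ of the $Y$-vertices; more precisely, I want: for every $X' \subseteq X$ with $|X'| < m/2$, the graph $H - X'$ contains a matching saturating $X \setminus X'$. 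This is exactly Hall's condition under the expansion hypothesis, since any $S \subseteq X \setminus X'$ has $|N_{H-X'}(S)| \ge |N_H(S)| - |X'| \ge 2|S| - m/2 \ge |S|$ once $|S| \ge$ something — and for small $S$ one uses $|N_H(S)| \ge D|S| \ge |S|$ directly; so Hall holds verbatim.

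Next I would assemble $T$ from $H$ by the standard doubling trick so that the deletable set $Z$ sits on one side. Concretely: let $Z := X$ (the $m$ vertices we are allowed to delete from), introduce the $m$ vertices of $Y$, and additionally introduce a set $Y'$ of $m$ fresh vertices together with a perfect matching $M$ between $Y$ and $Y'$; set $V(T) = X \cup Y \cup Y'$ and $E(T) = E(H) \cup M$. Then $v(T) = 3m$ (so $2m \le v(T) \le Lm$ with $L \ge 3$ comfortably), $\Delta(T) \le D + 1 =: L$, and $|Z| = m$. Now given $Z' \subseteq Z = X$ with $|Z'| < m/2$ and $|V(T) \setminus Z'|$ even: note $|V(T)\setminus Z'| = 3m - |Z'|$, so the parity condition forces $|Z'|$ to have the same parity as $m$; in particular $|X \setminus Z'| = m - |Z'|$ is even. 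By the Hall argument above, $H - Z'$ has a matching $M_1$ saturating $X \setminus Z'$ and covering exactly $m - |Z'|$ vertices of $Y$; let $Y_1 \subseteq Y$ be the remaining $|Z'|$ vertices of $Y$. Since $|Y_1| = |Z'|$ is even, the matching $M$ restricted to $Y_1$ pairs up $Y_1$ with a subset $Y_1' \subseteq Y'$ of the same size — \emph{wait}, this leaves the $m - |Z'|$ vertices of $Y'$ matched to the $Y$-vertices used by $M_1$ unpaired. I would fix this by instead putting the matching $M$ to work as follows: use $M$ to pair each vertex of $Y$ with its partner in $Y'$, i.e.\ take all of $M$ into the perfect matching of $T - Z'$; this saturates $Y \cup Y'$ entirely. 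Then I separately need $X \setminus Z'$ to be saturated — but the only edges at $X$ go to $Y$, and $Y$ is now fully used by $M$. So this naive split fails, and the genuine construction must interleave the two matchings.

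The correct assembly — and the step I expect to be the main obstacle, namely getting the parity bookkeeping exactly right — is the one from \cite{montgomery2019spanning}: take $H$ bipartite between $X$ (size $m$) and $Y$ (size $m$), and a second bipartite expander $H'$ between $Y$ and a new set $Y''$ of size $m$, so $V(T) = X \cup Y \cup Y''$ with $E(T) = E(H) \cup E(H')$; here $Z := X$. Given $Z' \subseteq X$ with $|X \setminus Z'|$ even (forced by the global parity condition since $v(T) = 3m$), first pick a matching $M_1 \subseteq E(H)$ saturating $X \setminus Z'$, covering $m - |Z'|$ vertices of $Y$ and leaving a set $Y_1 \subseteq Y$ of size $|Z'|$ uncovered; crucially $|Y_1| = |Z'|$ has the same parity as $m$. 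Now I need to saturate $Y_1 \cup Y''$ (size $|Z'| + m$, which is even) using only edges of $H'$ — but $H'$ has $m$ vertices on the $Y''$ side and $|Y_1| < m$ on the other, a size mismatch, so no perfect matching exists. The actual resolution uses that one is allowed to also \emph{not} cover some of $Y$: re-examining Montgomery's template, $Z$ is taken to be a \emph{subset} of one side of a \emph{single} expander and the "perfect matching of $T - Z'$" is obtained by a careful alternating-path argument rather than a clean product of two matchings. Rather than re-derive it here, the honest plan is: quote the template graph of \cite[Lemma~7.3]{ferber2022dirac} (or Montgomery's original), which already gives everything except the bounded-degree guarantee, and observe that their construction, being built from a constant-degree bipartite expander plus a constant number of perfect matchings, \emph{automatically} has $\Delta(T) = O(1)$ — this is the only addition over the cited lemma, and it requires no new idea, merely reading off the degree bound from the construction. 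Thus the proof reduces to: (i) recall the construction; (ii) note each vertex lies in $O(1)$ of the constituent expanders/matchings; (iii) conclude $\Delta(T) \le L$ for an absolute $L$. The detailed verification, identical to the cited source, is deferred to the appendix as stated.
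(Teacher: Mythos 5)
Your proposal never actually produces a working construction. The first two attempts (bipartite expander plus a fresh matched copy, then two stacked expanders) are acknowledged by you to fail, so they contribute nothing. The remaining content is the final paragraph, which punts to ``quote the template graph of Ferber--Kwan or Montgomery and observe that their construction automatically has $\Delta(T) = O(1)$.'' This is not a substantiated claim, and it is almost certainly false as stated: the paper introduces this lemma precisely because the bounded-degree guarantee is \emph{not} present in the cited Lemma~7.3 of Ferber--Kwan, and the proof modifies the construction to achieve it. Moreover, your description of the constituent pieces (``a constant-degree bipartite expander plus a constant number of perfect matchings'') does not match what is actually needed: the construction does not use extra perfect matchings at all.

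The idea you are missing is the following. Start from Montgomery's Lemma~10.7: a bipartite graph $R$ with parts $X$ and $Y \cup Z$, $|X| = 3s$, $|Y| = |Z| = 2s$, maximum degree $100$, with the property that removing \emph{exactly} $s$ vertices from $Z$ leaves a perfect matching. Take $s = \lceil m/2 \rceil$ and trim $Z$ to size $m$. The problem is that the hypothesis only allows you to remove $|Z'| < m/2 \le s$ vertices, not exactly $s$. The fix is not another expander or matching but a bounded-degree graph $G$ placed \emph{on} $Z$ whose independence number is less than $s$ (the paper uses the square of a cycle, degree $4$). Given $Z'$, you first greedily take a matching $M_1$ inside $G[Z \setminus Z']$, removing matched vertices two at a time, until exactly $s$ vertices of $Z$ remain; the small independence number of $G$ guarantees this process cannot stall early, and the parity hypothesis on $|V(T) \setminus Z'|$ guarantees you land on $s$ exactly. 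The remaining $s$ vertices of $Z$ together with $X \cup Y$ then have a perfect matching $M_2$ by Montgomery's lemma, and $M_1 \cup M_2$ is the desired perfect matching of $T - Z'$. All pieces have $O(1)$ degree, hence $\Delta(T) \le 104$. Your proposal, by failing to identify the need for the auxiliary graph on $Z$ with small independence number, has no mechanism for handling the case $|Z'| < s$, which is the entire point of the lemma.
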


At this point, we essentially reduced our goal to finding a single $xy$-absorber
for a prescribed pair of vertices $x, y \in R$. This is the key lemma of this
section.

\begin{lemma}\label{lem:single-absorber}
  Let $d \in \{1,2\}$. For any $\gamma > 0$ there exist $\lambda, C > 0$, such
  that w.h.p.\ $\Hnp$ satisfies the following, provided that $p \geq C\log n
  \cdot \max\{n^{-3/2}, n^{-3+d}\}$. Let $G \subseteq \Hnp$ with $\delta_d(G)
  \geq (\delta_d+\gamma)p\binom{n-d}{3-d}$ and let $S \subseteq V(G)$ with $|S|
  \leq \lambda n$. For any $x,y \in V(G) \setminus S$, that if $d = 1$
  additionally satisfy $\deg_{G-S}(x), \deg_{G-S}(y) \geq
  (\delta_d+\gamma/2)p\binom{n-1}{2}$, there exists a copy of $A_{xy}^{d}$ in
  $G-S$.
\end{lemma}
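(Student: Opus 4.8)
The plan is to build the absorber $A_{xy}^d$ greedily, edge by edge (or more precisely, small-gadget by small-gadget), using the Connecting Lemma to stitch the pieces together. First I would preprocess the host graph: apply Lemma~\ref{lem:robust-degree-inh} (if $d=1$) or Lemma~\ref{lem:robust-codegree-inh} (if $d=2$) to the set $S$, so that after removing an additional set $T$ of at most $\sqrt n$ vertices we are left with $U := V(G)\setminus(S\cup T)$ in which \emph{all} (resp.\ all but $O(\log n/p)$) $d$-sets have degree at least $(\delta_d+\gamma/4)p\binom{|U|-d}{3-d}$; in the case $d=1$ we use the ``furthermore'' clause to keep $x,y$ outside $T$, so their degrees into $U$ remain above $(\delta_1+\gamma/4)p\binom{|U|-1}{2}$. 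Reserve a set $W\subseteq U$ of linear size (say $|W|\ge |U|/2$), disjoint from everything we will place by hand, to serve as the ``connecting reservoir'' for the Connecting Lemma; by Lemma~\ref{lem:random-set-degree-inh} we may choose $W$ so that the relevant $d$-sets keep their degree into $W$ as well. Set aside the tiny set $S\cup T$ as forbidden throughout, noting $|S\cup T|\le\lambda n+\sqrt n \ll \eps|W|$, which is the only largeness constraint the Connecting Lemma imposes on the number of paths.

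For $d=2$ the absorber $A_{xy}^2$ has just nine vertices and five edges, with a very constrained overlap pattern. I would find it directly: since the minimum codegree in $G-S$ restricted to $U$ is still $(\tfrac14+\gamma/4)p(|U|-2)$ for all but $O(\log n/p)$ pairs, and $np\gg\log n$, one locates the vertices $v_1,\dots,v_7$ in order, at each step needing a common neighbour (or an edge through a specified pair) in a set of linear size avoiding the $O(1)$ already-chosen vertices and the forbidden set $S\cup T$; each such step succeeds because the relevant codegree is $\Omega(np)=\omega(1)$ after discarding the at most $O(\log n/p)$ bad pairs and the $O(\log n/p)$ vertices paired with bad partners, a quantity still $o(n)$. (Alternatively, one could route the ``spine'' through $W$ via a short application of the Connecting Lemma, but a direct greedy argument is cleaner here.)

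For $d=1$ the absorber $A_{xy}^1$ is larger, but its structure is modular: a backbone consisting of two small cycles through $x$ and $y$ respectively, two ``switcher'' gadgets (the $a_i$'s and $b_i$'s, each a loose $3$-cycle hanging off the backbone), and a copy of $A_{x_7y_7}^2$, all joined by four paths of length exactly four. The strategy is: (i) first produce the loose $3$-cycles attached at $x$ and $y$ and the switcher gadgets using Lemma~\ref{lem:switcher-dense-graph}, whose hypothesis $\delta_1(\cH)\ge\tfrac{7}{16}\binom n2$ is met by $G-S$ on $U$ once we note $(\delta_1+\gamma/4)p\binom{|U|-1}{2}$ is, relative to the \emph{induced} density $p$, above the $\tfrac{7}{16}$ threshold by a positive margin (here we use the degree lower bounds Lemma~\ref{lem:large-second-neighbourhood} and the strengthened min-degree version of Lemma~\ref{lem:reg-lem-partition-min-degree} to get genuine \emph{minimum} degree, not just ``almost all''); (ii) recursively invoke the $d=2$ case to obtain the embedded $A_{x_7y_7}^2$; (iii) finally use the Connecting Lemma with $U=$ (the $O(1)$ endpoints to be linked) and $W=$ (the reservoir) to realise the four length-four connecting paths simultaneously as a $(\{x_i,y_i\}_i,W,4)$-matching --- there are only four pairs, so the bound $t\le\eps|W|$ is trivially satisfied, and since each endpoint appears in at most two of the pairs the hypothesis of Lemma~\ref{lem:connecting-lemma} holds. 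Throughout, after embedding each gadget we fold its vertices into the forbidden set so later gadgets avoid it; since the whole absorber has $O(1)$ vertices, this never erodes more than $O(1)$ of any degree, and the min-degree surplus $\gamma/4$ absorbs it.

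The main obstacle I expect is the bookkeeping in step (i) for $d=1$: one must check that \emph{minimum} degree, not merely ``all but a few'' $d$-sets, is available wherever Lemma~\ref{lem:switcher-dense-graph} and Lemma~\ref{lem:large-second-neighbourhood} are applied, and that this persists on $U=V(G)\setminus(S\cup T)$ after the adversarial deletion of $S$. This is exactly what Lemma~\ref{lem:robust-degree-inh} is engineered to give (including keeping $x,y$ out of $T$), so the difficulty is in correctly tracking the constants $\gamma\to\gamma/2\to\gamma/4$ and verifying that $\sqrt 7/4+3/8>1$-type slack inequalities in Lemma~\ref{lem:switcher-dense-graph} still hold with a $\gamma/4$ loss. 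The other minor subtlety is ordering the recursion so that the forbidden set handed to the $d=2$ subcall already contains all backbone and switcher vertices, and feeding the Connecting Lemma a reservoir $W$ disjoint from everything; since all of these are $O(1)$-size except $W$ and $S\cup T$, none of the largeness hypotheses are threatened.
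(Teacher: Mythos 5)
Your $d=2$ plan matches the paper's argument: apply Lemma~\ref{lem:robust-codegree-inh} to discard $O(\log n/p)$ bad pairs and then greedily pick $v_4,v_2,v_6,v_3,v_5,v_1,v_7$ using that surviving codegrees are $\Omega(np)=\omega(1)$. That part is fine.

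The $d=1$ plan has a fatal gap. You propose to embed the backbone of $A_{xy}^1$ and the switcher gadgets by invoking Lemma~\ref{lem:switcher-dense-graph} directly on $G-S$ restricted to $U$, arguing that the hypothesis $\delta_1(\cH)\ge\frac{7}{16}\binom{n}{2}$ is satisfied ``relative to the induced density $p$.'' This rescaling does not make sense: Lemma~\ref{lem:switcher-dense-graph} is a \emph{dense} Dirac-type statement whose proof (like that of its ingredient in~\cite{buss2013minimum}) hinges on absolute counting arguments of the form $|N_{\cH}(v,a_4)|+|N_{\cH}(a_2)|>n$, forcing two neighbourhoods to intersect. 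In the sparse regime $p=\Theta(n^{-3/2}\log n)$ these neighbourhoods are of size $O(n^2p)=O(\sqrt n\log n)$, so no such intersection argument is available, and more fundamentally the codegree of a typical pair is $(n-2)p = o(1)$---so there are essentially no ``switcher'' configurations to find greedily. This is exactly the obstruction the paper discusses in Section~\ref{sec:absorbing-method}: the contracted backbone is an $O(1)$-vertex graph too dense to locate by degree arguments alone in this density range. The paper's actual route is qualitatively different and substantially more involved: use Lemma~\ref{lem:large-second-neighbourhood} to show $x$ and $y$ each have $\Omega(n)$ second-neighbourhood $4$-tuples (the $\cF_x,\cF_y$), contract them into a single vertex each to form an auxiliary hypergraph $J$, apply the sparse regularity lemma (Lemma~\ref{lem:reg-lem-partition-min-degree}) to $J$, find a copy of the contracted backbone in the \emph{reduced graph} using the dense lemmas (this is where Lemma~\ref{lem:switcher-dense-graph} and Lemma~\ref{lem:simple-absorber-dense-graph} actually get applied, legitimately, since the reduced graph is dense), and finally invoke the hypergraph K{\L}R theorem (Theorem~\ref{thm:KLR}) using Claim~\ref{cl:m3-density} on $m_3$ to transfer this back to a canonical copy in $J$, and then unfold the contraction. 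Without the contraction\,+\,regularity\,+\,K{\L}R machinery, there is no known way to embed the backbone at this sparsity, and your greedy plan does not supply a substitute.
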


We remark that the degree assumption for $x, y$ is not needed in the $d = 2$
case. The proof of this lemma is the most intricate part of the argument so we
defer it to Section~\ref{sec:finding-single-absorber}. With all these
ingredients we can now prove the absorbing lemma.

\begin{proof}[Proof of Lemma~\ref{lem:absorbing-lemma}]
  Let $D$ denote the number of vertices in $A_{xy}^d$ and further set
  \begin{gather*}
    L = L_{\ref{lem:template}}, \quad
    \lambda = \min\{\lambda_{\ref{lem:varying-size-sets}}(\gamma/64, 8DL),
    \lambda_{\ref{lem:single-absorber}}(\gamma/2)\}, \quad \text{and} \quad
    \eps = \eps_{\ref{lem:connecting-lemma}}(\gamma/2).
  \end{gather*}
  Having chosen these, let $\alpha$ be sufficiently small with respect to all
  constants (in particular, small with respect to the given $\xi > 0$) and $C$
  sufficiently large. Write $m := \alpha n$, let $T$ be the auxiliary graph
  given by Lemma~\ref{lem:template} for $m$, and let $Z$ be the set of special
  vertices in $T$ as in the lemma. Assume $\Hnp$ is such that it satisfies the
  conclusion of Lemma~\ref{lem:varying-size-sets} with $\gamma/64$ (as $\gamma$)
  and $8DL$ (as $C$), Lemma~\ref{lem:random-set-degree-inh},
  Lemma~\ref{lem:connecting-lemma}, and Lemma~\ref{lem:single-absorber} with
  $\gamma/2$ (as $\gamma$).

  Choose a uniform random set $R \subseteq V(G)$ of size $\alpha n$ and let $R_0
  \cup V \cup W$ be a partition of $V(G) \setminus R$ such that:
  \stepcounter{propcnt}
  \begin{alphenum}
    \item\label{abs-sets-size} $|R| = m$, $|R_0| = v(T)-m$, and $|W| = n/100$;
    \item\label{abs-sets-deg} $\deg_G(S, X) \geq
      (\delta_d+\gamma/2)p\binom{|X|-d}{3-d}$, for every $d$-set $S \subseteq
      V(G)$ and $X \in \{R \cup R_0, V, W\}$.
  \end{alphenum}
  Note that, in particular, $|V| = (1 - 1/100 - o(1))n \geq n/2$. By the
  property of Lemma~\ref{lem:random-set-degree-inh} almost any choice of such a
  partition will do.

  Let $f \colon V(T) \to R_0 \cup R$ be a bijection which maps the vertices
  belonging to the set $Z$ in $T$ to $R$. For each edge $xy \in E(T)$, we plan
  to first find an $f(x)f(y)$-absorber $A_{f(x)f(y)}^d$ whose internal vertices
  belong to $G[V]$ in such a way that all these absorbers are pairwise internally
  vertex-disjoint. Subsequently, we string all the $f(x)f(y)$-absorbers together
  by an application of the Connecting Lemma (Lemma~\ref{lem:connecting-lemma})
  over the set $W$.

  The first part of this is to be completed using Haxell's condition and
  Lemma~\ref{lem:single-absorber}. Let $\cH$ be a $(D-1)$-uniform hypergraph
  with vertex set $U \cup V$, where $U = \big\{\{f(x), f(y)\} : xy \in
  E(T)\big\}$. For $\{u_x,u_y\} \in U$ and $S \subseteq V$ with $|S| = D-2$, we
  add the edge $\{u_x,u_y\} \cup S$ to $\cH$ if and only if there is a copy of
  $A_{u_xu_y}^d$ in $G[\{u_x,u_y\} \cup S]$. Then what we are looking for is
  precisely a $U$-saturating matching in $\cH$. Comparing to Haxell's condition
  (Lemma~\ref{haxell}), it is sufficient to show that for every $U' \subseteq U$
  and $V' \subseteq V$ with $|V'| \leq 2D|U'|$, there is some $\{u_x,u_y\} \in
  U'$ and a copy of $A_{u_xu_y}^d$ in $G$ whose internal vertices are fully
  contained in $V \setminus V'$.

  As $\Delta(T) \leq L$, we can greedily find a set of pairwise vertex-disjoint
  edges $xy \in E(T)$ for which $\{f(x),f(y)\} \in U'$ and which is of size at
  least $|U'|/(2L)$. For simplicity, we assume $U'$ already consists only of
  such disjoint edges---this changes nothing in the proof. In case $d = 2$,
  straightforwardly applying Lemma~\ref{lem:single-absorber} with any
  $f(x),f(y)$ for which $\{f(x),f(y)\} \in U'$ (as $x,y$), $G[V\cup
  \{f(x),f(y)\}]$ (as $G$), and $V'$ (as $S$), we are done. Note that we can
  indeed do so since
  \[
    |V'| \leq 4DL|U'| \leq 4DL \cdot e(T) \leq 4DL^3 m = 4DL^3 \alpha n \leq
    \lambda n.
  \]
  In the other case, $d = 1$, the only thing remaining is to check that there
  are $f(x),f(y)$ with $\{f(x),f(y)\} \in U'$ that satisfy the degree
  requirement $\deg_G(f(x), V \setminus V'), \deg_G(f(y), V \setminus V') \geq
  (\delta_1+\gamma/4)p\binom{|V|-1}{2}$. Towards contradiction, suppose there is
  no such $f(x), f(y)$. Let $Q \subseteq V(G)$ be the union of all $f(x)$ that
  violate the prior requirement and such that for some $f(y)$, $\{f(x), f(y)\}
  \in U'$, and assume $|Q| \geq |U'|/2$. On the one hand, this means
  \[
    e_G(Q, V', V) \osref{\ref{abs-sets-deg}}\geq |Q| \cdot
    (\gamma/4)p\binom{|V|-1}{2} \osref{\ref{abs-sets-size}}\geq
    (\gamma/64)|Q|n^2p,
  \]
  while on the other, by the property of Lemma~\ref{lem:varying-size-sets} since
  $|Q| \leq 2|U'| \leq \lambda n$ and $|V'| \leq 4DL|U'| \leq 8DL|Q|$,
  \[
    e_G(Q, V', V) \leq e_G(Q, V', V(G)) < (\gamma/64)|Q|p\binom{n-1}{2} \leq
    (\gamma/64)|Q|n^2p,
  \]
  which is a contradiction. So, $|Q| < |U'|/2$ and by the pigeonhole principle,
  there has to exist a pair $\{f(x),f(y)\} \in U'$ with the desired degree value
  into $V \setminus V'$.

  Finally, we use Lemma~\ref{lem:connecting-lemma} to connect all
  $f(x)f(y)$-absorbers into
  one loose path. Denote the absorbers we have found as $A_1, \dotsc, A_t$, for
  $t = e(T)$, each $A_i$ a copy of $A_{xy}^d$. We want to, for every $i \in
  [t-1]$, connect $v_1 \in V(A_i)$ and $a_1 \in V(A_{i+1})$ if $d=1$ (see
  Figure~\ref{fig:absorber-1}) and $v_7 \in V(A_i)$ and $v_1 \in V(A_{i+1})$ if
  $d=2$ (see Figure~\ref{fig:absorber-2}). Let $Y \subseteq V$ be the set
  of (the images of) all these $2(t-1)$ vertices we would like to connect. We
  use the property of Lemma~\ref{lem:connecting-lemma} with $W$, $Y$ (as $U$)
  which we can do by \ref{abs-sets-deg} and since $|W| \geq n/100$ and the total
  number of pairs we want to connect with paths is
  \[
    e(T) - 1 \leq L^2m = L^2 \alpha n \leq \eps n/100 \leq \eps|W|.
  \]
  Lastly, observe that the total number of vertices used by this whole procedure
  is
  \[
    e(T) \cdot D + (e(T) - 1) \cdot 7 \leq L^2 \alpha n \cdot (D+7) \leq \xi n,
  \]
  as desired.

  It remains to establish that the graph $A$ obtained by connecting the
  absorbers $A_1, \dots, A_t$ as described comprises an $(a,b,R)$-absorber,
  where $a$ and $b$ are (the images of) $a_1 \in V(A_1)$ and $v_1 \in V(A_t)$ if
  $d = 1$ and (the images of) $v_1 \in V(A_1)$ and $v_7 \in V(A_t)$ if $d = 2$.
  Namely, let $R' \subseteq R$ of size $|R'| < |R|/2$ be such that $V(A)
  \setminus R'$ has odd cardinality. Note that $V(A) \setminus (R_0 \cup R)$
  must also be of odd cardinality as it can be covered by a loose path (by
  taking the non-covering paths of all individual $f(x)f(y)$-absorbers) and
  hence, crucially, $R_0 \cup (R \setminus R')$ is of even cardinality. In
  particular, this means there is a perfect matching for $f^{-1}(R_0 \cup (R
  \setminus R'))$ in $T$. Consider the set of $f(x)f(y)$-absorbers corresponding
  to the edges in this matching. Then, as a witness for the absorbing property
  of $A$, we can use the covering path for all these absorbers, the non-covering
  path for all other $f(x)f(y)$-absorbers, and the short paths connecting the
  absorbers to get the desired loose path between $a$ and $b$.
\end{proof}

\subsection{Finding an $xy$-absorber robustly}\label{sec:finding-single-absorber}

In this section we prove Lemma~\ref{lem:single-absorber}. The case $d=2$ is much
simpler, so we deal with it first.

\begin{proof}[Proof of Lemma~\ref{lem:single-absorber}, case $d=2$]
  Recall, $\delta_2 = 1/4$. Let $\lambda =
  \lambda_{\ref{lem:robust-codegree-inh}}(\gamma,1/4)$ and suppose $\Hnp$
  satisfies the conclusion of Lemma~\ref{lem:general-edge-concentration} and
  Lemma~\ref{lem:robust-codegree-inh}.

  We use the property of Lemma~\ref{lem:robust-codegree-inh} to get a set $T
  \subseteq \binom{V(G) \setminus S}{2}$ of size $|T| \leq 10^9 \log n/p \leq
  10^{-6}n$ (by choosing $C$ sufficiently large) such that all pairs $\{u, v\}
  \in \binom{V(G) \setminus S}{2} \setminus T$ satisfy $\deg_F(u, v) \geq
  (1/4+\gamma/2)p(n-|S|-2) = \Omega(\log n)$, for $F := G-S$. Recall, our goal
  is to find a copy of $A_{xy}^2$ in $F$ (see Figure~\ref{fig:absorber-2}).

  Since $T$ is relatively small, there must be some $v_4 \in V(F) \setminus
  \{x,y\}$ such that neither $\{x,v_4\}$ nor $\{y, v_4\}$ belong to $T$. Having
  chosen $v_4$, take distinct $v_2, v_6 \in V(F) \setminus \{x,y,v_4\}$ such
  that $v_2xv_4, v_4yv_6$ are edges in $F$. Let
  \[
    V_3 := \{ v \in V(F) \setminus \{x,y,v_2,v_4,v_6\} : \{v, v_2\},\{v,v_4\}
    \notin T\}
  \]
  and
  \[
    \overline{V_5} := \{x, y, v_2, v_4, v_6\} \cup \{ v \in V(F) : \{v, v_6\}
    \in T \},
  \]
  and note that $|V_3| \geq (1-10^{-2})n$ and $|\overline{V_5}| \leq 10^{-5}n$.
  It is enough to show that there is an edge $v_3v_4v_5$ in $F$ such that $v_3
  \in V_3$ and $v_5 \in V(F) \setminus \overline{V_5}$, because $\{v_2,v_3\},
  \{v_5,v_6\} \notin T$ imply that we can choose $v_1$ and $v_7$ as desired for
  $A_{xy}^2$.

  Suppose for contradiction such an edge does not exist in $F$. Then, from the
  codegree assumption and the property of
  Lemma~\ref{lem:general-edge-concentration}, we have
  \[
    |V_3|/2 \cdot (1/4+\gamma/2)p(n-|S|-2) \leq e_F(v_4,V_3,\overline{V_5})
    \leq e_G(v_4,V_3,\overline{V_5}) \leq 2 \cdot 10^{-5}|V_3|np,
  \]
  which is a contradiction. Thus, an edge $v_3v_4v_5 \in F$ exists as desired.
\end{proof}

In what follows we provide a proof of Lemma~\ref{lem:single-absorber} for $d=1$.
This is the most involved part of the whole proof and we, for convenience of
reading, first give a brief outline. The main idea relies on an intricate
combination of the sparse regularity method and the connecting lemma. It
consists of two almost independent steps:
\begin{enumerate*}[(1)]
  \item find a backbone of an $A_{xy}^1$ absorber;
  \item use the Connecting Lemma (Lemma~\ref{lem:connecting-lemma}) to find the
    remaining loose paths which comprise an absorber $A_{xy}^1$ (see
    Figure~\ref{fig:absorber-1}).
\end{enumerate*}
Most of the difficulty lies in the first part. To do this, we use the
`contraction technique' of Ferber and Kwan~\cite{ferber2022dirac} and the
regularity method. In order for the next steps to make more sense, we first
introduce a definition.

\begin{definition}
  A \emph{contracted backbone of an absorber} is a graph that consists of:
  \begin{itemize}
    \item edges $x' x_7 x_8$, $x_8 x_9 x_{10}$ and $a_1 a_2 a_3$, $a_2 a_4 x'$,
      $a_3 a_4 x_9$;
    \item edges $y' y_7 y_8$, $y_8 y_9 y_{10}$ and $b_1 b_2 b_3$, $b_2 b_4 y'$,
      $b_3 b_4 y_9$;
    \item a copy of $A^2_{x_7 y_7}$.
  \end{itemize}
\end{definition}

A contracted backbone of an absorber can be thought of as starting with a
backbone of $A_{xy}^1$ and \emph{contracting} the edges $x_1x_2x_3,
x_3xx_4,x_4x_5x_6$ into a single vertex $x'$, and keeping only the edges `to the
outside' (that is, ones containing $x_2$ or $x_6$). The same is done to obtain
$y'$. Note that, in this context, the vertices $x'$ and $y'$ play a special role
and we often explicitly mention them when talking about the contracted backbone
of an absorber. Strictly speaking, a name for this structure that is more
descriptive of the aforementioned contraction operation might be a `contracted
backbone of an $xy$-absorber'. However, as the graph itself does not contain the
vertices $x,y$, we omit them from the name to avoid confusion. For a more
natural visual representation we depict this contraction operation below.

\begin{figure}[!htbp]
  \captionsetup[subfigure]{textfont=scriptsize}
  \centering
  \begin{subfigure}{.49\textwidth}
    \centering
    \includegraphics[width=\textwidth]{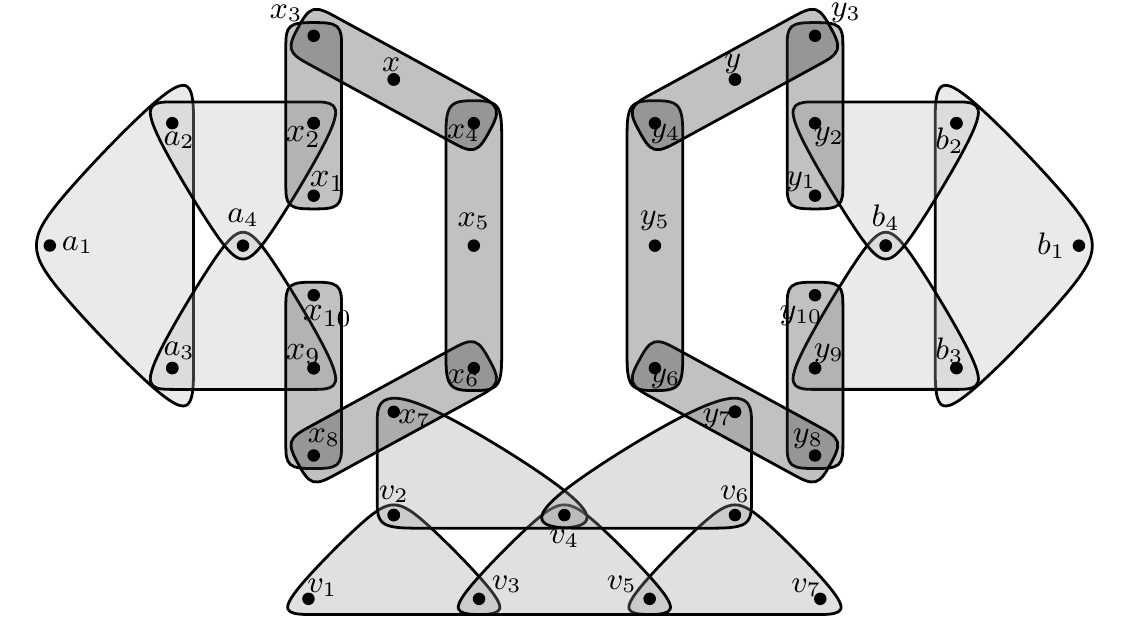}
    \caption{The backbone of an absorber $A_{xy}^1$.}
    \label{fig:absorber-backbone}
  \end{subfigure}%
  \hfill
  \begin{subfigure}{.49\textwidth}
    \centering
    \includegraphics[width=\textwidth]{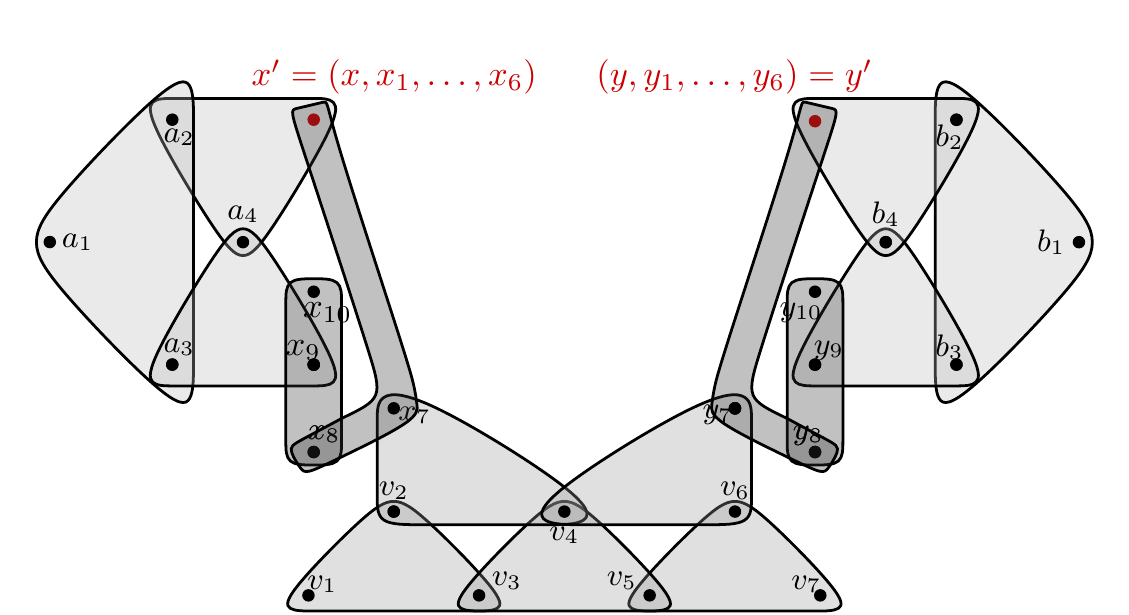}
    \caption{The contracted backbone of an absorber $A_{xy}^1$.}
    \label{fig:contracted-backbone}
  \end{subfigure}%
  \caption{The backbone and contracted backbone of an absorber $A^1_{xy}$.}
  \label{fig:absorber-without-paths}
\end{figure}

The contraction operation on $G \subseteq \Hnp$, almost analogous to the one of
Ferber and Kwan, is defined as follows. For a $3$-graph $G$, a collection $\cU$
of disjoint sets $U_1, U_2 \subseteq V(G)$, and a family of disjoint $4$-tuples
$\cF \subseteq V(G)$ we let $G(\cU, \cF)$ be a $3$-graph on vertex set $U_1 \cup
U_2 \cup \cF$ and whose edge set is given as follows: add first all edges from
$G[U_1 \cup U_2]$ and next, for every $\mbw = (w_1,w_2,w_3,w_4) \in \cF$, we
add an edge $\mbw u v$ to $G(\cU,\cF)$ if and only if $u, v \in U_1$ and
$w_2uv \in E(G)$ or $u, v \in U_2$ and $w_4uv \in E(G)$.

We can now continue with the outline. Namely, imagine for a moment that we can
find a collection of $\Omega(n)$ distinct $4$-tuples $(x_1,x_2,x_5,x_6)$ such
that, $x_1x_2x_3, x_3xx_4, x_4x_5x_6 \in E(G)$, for some $x_3, x_4$. Let us, for
every such tuple, contract these vertices (and edges) into a single vertex and
keep only edges to the outside that contain $x_2$ or $x_6$ (as in the contracted
graph $G(\cdot,\cdot)$ above). Denote the set of these new vertices as $X'$.
Now, do the same for $y$ while keeping all these disjoint to obtain $Y'$. If we
were to find a copy of the contracted backbone of an absorber with $x', y'$
mapped into some
vertices of $X'$ and $Y'$ respectively, we would be done by just undoing the
contraction operation.

To do this last part, we rely on the sparse regularity lemma
(Lemma~\ref{lem:reg-lem-partition-min-degree}) and Theorem~\ref{thm:KLR}. If all
the previous steps have been done carefully, what remains of the graph still
satisfies all the necessary conditions (in particular, the minimum degree will
be sufficiently large) to do this. First, we show that in the reduced graph
obtained from the application of Lemma~\ref{lem:reg-lem-partition-min-degree} we
can find a copy of the contracted backbone of an absorber with the vertices $x'$
and $y'$ being mapped to their corresponding `clusters' belonging to $X'$ and
$Y'$. Subsequently we use Theorem~\ref{thm:KLR} to transfer it into a canonical
copy of the same graph in $G$\footnote{Strictly speaking, this copy is found in
a contracted version of $G$ and then unfolded into a copy of $A_{xy}^1$ in $G$
itself.}. As we prove in the appendix (not to interrupt the flow of the main
argument), the contracted backbone of an absorber is just sparse enough to exist
in the regular partition.

\begin{claim}\label{cl:m3-density}
  The $m_3$ density of a contracted backbone of an absorber is $2/3$.
\end{claim}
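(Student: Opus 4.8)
The claim asks us to compute $m_3(H)$ for $H$ the contracted backbone of an absorber, where by definition
\[
  m_3(H) = \max\Big\{ \frac{e(H') - 1}{v(H') - 3} : H' \subseteq H,\ v(H') \geq 4 \Big\}.
\]
The plan is to first evaluate the ratio $\frac{e(H)-1}{v(H)-3}$ for the whole graph $H$, check it equals $2/3$, and then verify that no proper subgraph $H' \subsetneq H$ beats this value. For the first part I would simply read off the vertex and edge counts from the definition: the contracted backbone consists of the vertices $x', x_7, x_8, x_9, x_{10}, a_1, a_2, a_3, a_4$ and their $y$-counterparts $y', y_7, y_8, y_9, y_{10}, b_1, b_2, b_3, b_4$, together with the seven internal vertices $v_1, \dots, v_7$ of the copy of $A^2_{x_7 y_7}$ (with $x_7, y_7$ already counted). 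Tallying gives $v(H) = 9 + 9 + 7 = 25$ and, counting the edges in each bullet of the definition ($5$ from the $x$-part, $5$ from the $y$-part, $5$ from $A^2_{x_7y_7}$), $e(H) = 15$. Hence $\frac{e(H)-1}{v(H)-3} = \frac{14}{22} = \frac{7}{11}$, which is \emph{not} $2/3$ --- so I should recount, being careful which vertices are shared; most likely $A^2_{x_7y_7}$ contributes only $x_7, y_7$ plus $v_1, v_3, v_5$ as new (since in the codegree absorber $A^2_{uv}$ has nine vertices $u,v,v_1,\dots,v_7$), and some of the $a_i, b_i$ or the path endpoints coincide, so that the true counts satisfy $e(H)-1 = \tfrac23(v(H)-3)$. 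The bookkeeping here is the first thing to nail down precisely from Figure~\ref{fig:contracted-backbone}.

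Once the global ratio is confirmed to be $2/3$, the substantive step is the upper bound: every subgraph $H' \subseteq H$ with $v(H') \geq 4$ satisfies $e(H') - 1 \leq \tfrac23(v(H') - 3)$, i.e. $3e(H') \leq 2v(H') - 3$. The key structural observation is that $H$ is built from small linearly-linked pieces --- each edge is a triple, consecutive edges share at most one vertex, and the whole graph is essentially a union of short loose paths and loose cycles glued at single vertices. For a loose path with $k$ edges one has $v = 2k+1$, so $3e - (2v-3) = 3k - (4k+2-3) = -k+1 \leq 0$, with equality only at $k=1$; for a loose cycle with $k$ edges, $v = 2k$ and $3e-(2v-3) = 3k-4k+3 = 3-k \leq 0$. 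So individual paths and cycles are fine, and the worry is only whether gluing pieces together can push the ratio up. Gluing two subgraphs at a single vertex is ``degree-$3$-critical-neutral'': identifying one vertex decreases $v$ by $1$ and $e$ by $0$, changing $3e-2v+3$ by $+2$; but since each gluing in $H$ happens at a \emph{cut vertex} of the global structure, one checks that the pieces being glued are themselves sub-$2/3$-dense with enough slack (a path of length $\geq 2$ has slack $\geq 1$) to absorb the $+2$.

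The clean way to organise this is: let $H'$ be a subgraph maximising the ratio, assume $H'$ is connected (a disconnected maximiser can be replaced by its densest component, which has $v \geq 4$ or else is a single edge with ratio $0$), and then argue that a connected subgraph of $H$ decomposes through its cut vertices into ``blocks'' each of which is a sub-path or sub-cycle of one of the constituent pieces (plus possibly the degree-$4$ vertices $a_4, b_4, v_4$ where three edges meet). I would handle those branch vertices by noting they contribute one extra edge for one extra vertex locally, which keeps $3e \leq 2v-3$ as an invariant propagated along a block decomposition: prove by induction on the number of blocks that $3e(H') \leq 2v(H') - 3$, the base case being a single block (path/cycle, done above), and the inductive step adding a block sharing exactly one vertex with what's built so far, where the computation $3(e_1+e_2) - 2(v_1 + v_2 - 1) + 3 = [3e_1 - 2v_1 + 3] + [3e_2 - 2v_2 + 3] - 1 \leq 0 + 0 - 1$ closes it (using that each block, being a path of length $\geq 1$ or a small cycle, has $3e - 2v + 3 \leq 0$, and at least one of them has it $\leq -1$ unless both are single edges, which is easily checked not to occur since $H'$ being a proper dense subgraph forces at least one longer block).

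\textbf{Main obstacle.} The real work --- and the only place an error could creep in --- is the exact edge/vertex census of the contracted backbone and, relatedly, correctly identifying which vertices are the branch (degree-$\geq 3$ among the triples) vertices; the extremal subgraph is presumably obtained by taking $H$ itself or by deleting some pendant path, and one must make sure the deletion of a pendant loose path (which removes $k$ edges and $2k$ vertices, changing $3e - 2v$ by $-3k + 4k = +k > 0$) does not produce a denser subgraph --- wait, that computation shows deleting a pendant path \emph{increases} $3e - 2v$, hence could increase the ratio, so the maximiser is a ``core'' with all pendant paths stripped. Thus the genuinely delicate point is to pin down this core and verify directly that for it $3e - 2v + 3 = 0$ (ratio exactly $2/3$) while every further subgraph of the core is strictly sparser; this I would do by the block-decomposition induction above, which is robust to the precise shape as long as the core is a union of small cycles and paths glued at cut vertices, which it visibly is from Figure~\ref{fig:contracted-backbone}.
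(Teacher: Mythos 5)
Your high-level strategy --- split the graph at cut vertices, bound each piece, and use a gluing inequality --- is the right one, and in fact the gluing computation
\[
  3(e_1+e_2) - 2(v_1+v_2-1) + 3 = \bigl(3e_1-2v_1+3\bigr) + \bigl(3e_2-2v_2+3\bigr) - 1
\]
is precisely what powers the paper's Claim~\ref{splitGraphM3Density}. But there are two genuine gaps in the execution.

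First, your vertex/edge census of the whole contracted backbone, $v(H)=25$, $e(H)=15$, is \emph{correct}, and the resulting ratio $14/22 = 7/11 < 2/3$ is not a mistake to be ``recounted away.'' The $m_3$-density is not attained on the whole graph; it is attained on the proper subgraphs $S_{x'}$, $S_{y'}$, and $A^2_{x_7y_7}$ (the three pieces obtained by cutting at $x_7$ and $y_7$), each of which has $9$ vertices and $5$ edges and hence ratio $(5-1)/(9-3)=2/3$. Your proposed recount --- that $A^2_{x_7y_7}$ contributes only $x_7, y_7, v_1, v_3, v_5$ --- is wrong; all of $v_1, \ldots, v_7$ are new. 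The claim's statement is a supremum over subgraphs, and for this graph the whole is sparser than its densest part.

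Second, and more substantively, your reduction to ``blocks that are sub-paths or sub-cycles'' does not apply to the pieces $S_{x'}$ and $A^2_{x_7y_7}$. Each of these is a connected linear $3$-graph with $v=2e-1$, so each contains two independent Berge cycles; they are not loose paths or loose cycles, and their edge-adjacency graphs are not trees-plus-one-cycle. (In $A^2_{x_7y_7}$, the vertex $v_4$ lies in three edges; in $S_{x'}$, the five edges meet along a $5$-cycle with a chord in the edge-adjacency graph.) Your parenthetical ``plus possibly the degree-$4$ vertices $a_4, b_4, v_4$ where three edges meet'' misstates the degrees ($a_4, b_4$ are only in two edges each) and, more importantly, hand-waves past exactly the structure that needs to be controlled. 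The paper handles these pieces not by decomposing them further but by a direct case analysis on $e(H')$ for $H' \subseteq S_{x'}$ (resp.\ $\subseteq A^2_{x_7y_7}$), exploiting linearity to bound $v(H')$ from below for each $e(H') \in \{2,3,4,5\}$. Your proposal would need either this direct argument for the two $9$-vertex pieces as a base case, or a much more careful accounting of what the ``blocks'' actually are; as written, the induction has no valid base case.
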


There is one crucial difference compared to the method used in
\cite{ferber2022dirac}, reflected in the fact that we work with $p =
\Theta(n^{-3/2}\log n)$, in comparison to $p = \Omega(n^{-1})$. Namely, the
neighbourhood of a vertex in our setting is roughly of size $n^2p = \Theta(\sqrt
n\log n)$, which is much below the point at which we can rely on regularity
properties. Hence, we first need to show using ad-hoc density techniques that
$x$ and $y$ expand to $\Theta(n)$ vertices in two hops, and from then on start
implementing the strategy outlined above. This all also affects the design of
our $xy$-absorbers.

As a final preparation step, we need a statement for dense hypergraphs that
enables us to find a copy of the contracted backbone of an absorber in the
reduced graph.

\begin{lemma}[Proposition~8 in~\cite{buss2013minimum}]
  \label{lem:simple-absorber-dense-graph}
  For every $\gamma \in (0, 3/8)$ the following holds for every sufficiently
  large $n$. Suppose $\cH$ is a $3$-uniform hypergraph on $n$ vertices which
  satisfies $\delta_1(\cH) \geq (5/8+\gamma)^2\binom{n}{2}$. Then for every pair
  of vertices $x,y \in V(\cH)$ the number of $7$-tuples that form a copy of
  $A_{xy}^2$ is at least $(\gamma n)^7/8$.
\end{lemma}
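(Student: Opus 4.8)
This is \cite[Proposition~8]{buss2013minimum}, and we only outline the argument. Write $\beta := 5/8+\gamma$, so the hypothesis reads $\delta_1(\cH)\ge\beta^2\binom n2$. We use three elementary facts. First, since $\binom{|N_\cH(v)|}{2}\ge\deg_\cH(v)\ge\beta^2\binom n2$, every vertex satisfies $|N_\cH(v)|\ge\beta(n-1)$. Second, fixing a vertex $u$ and summing $\sum_w\deg_\cH(u,w)=2\deg_\cH(u)\ge\beta^2 n(n-1)$ while bounding each summand by $n$, we see that all but at most $(1-\beta^2+\gamma)n+O(1)$ vertices $w$ satisfy $\deg_\cH(u,w)\ge\gamma n$; write $D(u)$ for the set of such $w$, so $|D(u)|\ge(\beta^2-\gamma)n-O(1)$. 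Third, we invoke the codegree-boosting estimate behind Lemma~\ref{lem:switcher-dense-graph} (namely \cite[Claim~9]{buss2013minimum}) applied to the pair $x,y$: there is a set $W$ with $|W|\ge\gamma n$ such that, after possibly swapping $x$ and $y$, every $w\in W$ has $\deg_\cH(x,w)\ge\gamma n$ and $\deg_\cH(y,w)\ge 3n/8$.

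The plan is to assemble a copy of $A_{xy}^2$ (see Figure~\ref{fig:absorber-2}) greedily, choosing its vertices in the order $v_4,v_2,v_6,v_3,v_5,v_1,v_7$ and at each step confining the new vertex to an explicit intersection of one neighbourhood with at most one set $D(\cdot)$. Take the centre $v_4\in W$ (at least $\gamma n$ options); then $v_2\in N_\cH(x,v_4)$ (at least $\gamma n-O(1)$ options) and $v_6\in N_\cH(y,v_4)$ (at least $3n/8-O(1)\ge\gamma n$ options, using $\gamma<3/8$), which realises the edges $xv_2v_4$ and $yv_4v_6$; then the connector $v_3\in N_\cH(v_4)\cap D(v_2)$, a set of size at least $(\beta+\beta^2-1-\gamma)n-O(1)=(\tfrac1{64}+\tfrac54\gamma+\gamma^2)n-O(1)$, which exceeds $\gamma n$; then $v_5\in N_\cH(v_3,v_4)\cap D(v_6)$ (this is the step whose naive version needs to be refined, see below); and finally the pendant vertices $v_1\in N_\cH(v_2,v_3)$ and $v_7\in N_\cH(v_5,v_6)$, each with at least $\gamma n-O(1)$ options because $v_3\in D(v_2)$ and $v_5\in D(v_6)$. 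Discarding at each step the at most $\binom 92$ coincidences with previously chosen vertices, one multiplies the seven surviving factors, each of size at least $\gamma n-o(n)$, to obtain at least $(\gamma n)^7/8$ copies once $n$ is large.

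The step that genuinely needs care — and where the shape of $A_{xy}^2$ and the \emph{asymmetry} of the codegree-boosting estimate are used — is pinning down the connectors $v_3$ and $v_5$: each lies in two of the five edges, so retaining linearly many choices for the pendant vertex hanging off it forces a linear codegree condition towards one of $v_2,v_6$, while the set available for $v_5$ is $N_\cH(v_3,v_4)$, whose size is merely $\deg_\cH(v_3,v_4)$; since the two ``large-codegree'' sets of a fixed vertex need not intersect, a bare union bound stalls here. Resolving this is the crux of \cite[Proposition~8]{buss2013minimum}: one couples the choices of $v_4,v_3,v_5$ — for instance, selecting $v_4$ (and the part of its link graph in which $v_3,v_5$ are sought) so that the required link edge is guaranteed, or double counting globally over all $v_4\in W$ rather than vertex by vertex — so that, despite the overlapping constraints, all seven vertices keep $\Omega(\gamma n)$ choices. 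We refer to that source for the full computation.
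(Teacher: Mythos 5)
The paper does not prove this statement; it is imported verbatim as Proposition~8 of \cite{buss2013minimum} and used as a black box, so there is no internal proof to compare your sketch against. That said, your sketch is not itself a proof. The setup is sound: the neighbourhood bound $|N_\cH(v)|\ge\beta n-O(1)$, the sets $D(u)=\{w:\deg_\cH(u,w)\ge\gamma n\}$ of size at least $(\beta^2-\gamma)n-O(1)$, the asymmetric boosting set $W$ from Claim~9 of \cite{buss2013minimum}, and a greedy vertex-by-vertex count are all natural ingredients for this kind of supersaturation bound.

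However, as you yourself flag, the argument stalls exactly where the work lies. To leave $\Omega(\gamma n)$ choices for the pendants $v_1,v_7$ you want $v_3\in D(v_2)$ and $v_5\in D(v_6)$, yet $v_5$ is confined to $N_\cH(v_3,v_4)$, whose size is a priori only $\deg_\cH(v_3,v_4)\ge1$; forcing $v_3\in D(v_4)$ to enlarge it conflicts with $v_3\in D(v_2)$, since $|D(v_4)\cap D(v_2)|\ge(2\beta^2-2\gamma-1)n-O(1)$ is negative for small $\gamma$ (at $\gamma=0$ the coefficient is $-\tfrac{14}{64}$), and the analogous union bound over link-edges fails in the same range. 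You name two plausible escapes --- coupling the choice of $v_4$ with the pair $(v_3,v_5)$, or a global double count over $v_4\in W$ --- but carry out neither and defer to the source for the actual computation. As written this is a correct diagnosis of the difficulty, not a proof of the stated quantitative bound; the crux step is missing.
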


\begin{proof}[Proof of Lemma \ref{lem:single-absorber}, case $d=1$]
  Recall, $\delta_1 = 7/16$. Given $\gamma$, let $t_0 \in \N$ be a large
  constant, in particular such that $3/t_0 \ll \gamma$. Next, let $\xi =
  \xi_{\ref{lem:large-second-neighbourhood}}(\gamma/256)$, $\lambda =
  \lambda_{\ref{lem:robust-degree-inh}}(\gamma)$, choose $\eps$ sufficiently
  small, and let $\eta$ and $b$ be as given by
  Lemma~\ref{lem:reg-lem-partition-min-degree} for $7/16+\gamma/8$ (as $\delta$)
  and other respective parameters. Pick $\mu$ sufficiently small with respect to
  $\eps$, $t_0$, and $\xi$. Let $G' := G - S$. We condition on $\Hnp$ satisfying
  the conclusion of Lemma~\ref{lem:large-second-neighbourhood},
  Lemma~\ref{lem:random-set-degree-inh}, Lemma~\ref{lem:robust-degree-inh}, and
  Lemma~\ref{lem:connecting-lemma}. Furthermore,
  \begin{alphenum}
    \stepcounter{propcnt}
    \item\label{hnp-prop-KLR} for all disjoint $U_1, U_2 \subseteq V(G)$ and
      $\cF \subseteq V(G)^4$, with $|U_i|, |\cF| \geq \xi n$, the conclusion of
      Theorem~\ref{thm:KLR} holds for $G(\cU,\cF)$ with $\cU = (U_1,U_2)$: for
      every $H$ with $m_3(H) \leq 2/3$, every subgraph $G_0$ of $G(\cU, \cF)$
      which belongs to $\cG(H, \mu n, 2 \eps p \mu^3 n^3, p, \eps)$ contains a
      canonical copy of $H$;
    \item\label{hnp-prop-upper-uniform} every $G(\cU,\cF)$ as in
      \ref{hnp-prop-KLR} is $(\eta, b, p)$-upper-uniform;
  \end{alphenum}

  Let us establish \ref{hnp-prop-KLR} and \ref{hnp-prop-upper-uniform}.
  Observe that $G(\cU, \cF)$ has $\Omega(n)$
  vertices and that it can be coupled with a random graph on its
  vertex set with edge
  probability $p$. Namely, there is a bijection $\phi$ from edges of $G(\cU,
  \cF)$ to $3$-sets in $V(G)$ such that the existence of $e$ as an edge in
  $G(\cU, \cF)$ is determined by $\phi(e)$ being an edge in $G$ or not.

  So, for a fixed choice of $U_1$, $U_2$, and $\cF$, the graph $G(\cU,\cF)$
  satisfies both \ref{hnp-prop-KLR} and \ref{hnp-prop-upper-uniform} with
  probability at least $1 - e^{-\Omega(n^3p)} - e^{-\omega(n\log n)}$. As there
  are at most $2^{2n} \cdot n^{4 n} \leq e^{10n \log n}$ choices for these
  sets, recalling that $p \geq Cn^{-3/2}\log n$, a simple union bound shows that
  w.h.p.\ both \ref{hnp-prop-KLR} and \ref{hnp-prop-upper-uniform} hold in
  $\Hnp$. Thus, from now on we also condition on
  \ref{hnp-prop-KLR}--\ref{hnp-prop-upper-uniform}.

  By the property of Lemma~\ref{lem:robust-degree-inh} there is a set $T
  \subseteq V(G) \setminus \{x, y\}$ with $|T| \leq \sqrt{n}$ such that the
  graph $G'' := G - (S \cup T)$ has minimum degree at least
  $(7/16+\gamma/4)p\binom{|V(G'')|-1}{2}$. Hence, for simplicity of notation, we
  assume that $G'$ already satisfies this.

  Let $V_x \cup V_y \cup U_1 \cup U_2 \cup W$ be an equipartition of $V(G')
  \setminus \{x,y\}$ such that $\deg_{G'}(v, Z) \geq
  (7/16+\gamma/8)p\binom{|Z|-1}{2}$ for every $v \in V(G')$ and $Z \in \{V_x,
  V_y, U_1, U_2, W\}$. Observe that $|Z| \geq n/6$. A vast number of partitions
  is such by the property of Lemma~\ref{lem:random-set-degree-inh}, so we fix
  one of them. In order to complete the proof it is sufficient to show that
  \begin{enumerate}[label=(\textit{\roman*}), ref=(\textit{\roman*})]
    \item\label{single-absorb-1} there exists a backbone of $A^1_{xy}$ in
      $G'[V_x \cup V_y \cup U_1 \cup U_2 \cup \{x,y\}]$;
    \item\label{single-absorb-2} for a copy of a backbone of $A^1_{xy}$ as
      above, there is an $x_1x_{10}$-path, a $y_1y_{10}$-path, a $x_5y_5$-path,
      and a $v_7b_1$-path, each of length four, and whose internal vertices all
      belong to $W$.
  \end{enumerate}
  Throughout the proof and for ease of reference, it might help to have
  Figure~\ref{fig:absorber-1} and especially
  Figure~\ref{fig:absorber-without-paths} in mind. Assuming we have found the
  backbone of $A_{xy}^1$, step \ref{single-absorb-2} follows by applying the
  Connecting Lemma with $W$ and (the images of) $\{\{x_1, x_{10}\}, \{y_1,
  y_{10}\}, \{x_5, y_5\}, \{v_7, b_1\}\}$ as the family of pairs to be connected
  by paths. For the rest of the proof, we focus on showing
  \ref{single-absorb-1}.

  Note that, for $* \in \{x,y\}$,
  \[
    \deg_{G'}(*, V_*) \geq (7/16+\gamma/8)p\binom{n/6-1}{2} \geq
    \frac{\gamma}{256} p\binom{n-1}{2}.
  \]
  By the property of Lemma~\ref{lem:large-second-neighbourhood} for $V_*$ (as
  $W$), for every $* \in \{x, y\}$ there exist a family of $4$-tuples $\cF_*
  \subseteq V_*^4$ and a set of pairs $\cP_* \subseteq \binom{V_*}{2}$, all
  pairwise disjoint, of size $|\cF_*| = \xi n$, $|\cP_*| = \sqrt{n}$, and such
  that
  \begin{itemize}
    \item $*uv\in E(G')$, for every $\{u, v\} \in \cP_*$, and
   \item for every $(w_1,w_2,w_3,w_4) \in \cF_*$ there is some $\{u, v\} \in
     \cP_*$ with $uw_1w_2, v w_3 w_4 \in E(G')$.
  \end{itemize}

  Let now $J := G'(\{U_1, U_2\}, \cF_x \cup \cF_y)$. Recall, $J$ is then
  obtained from $G'$ by `contracting' each $4$-tuple in $\mathcal{F}_x \cup
  \mathcal{F}_y$ into a single vertex and keeping only specifically selected
  edges. Note that $\cF_*$ become sets of vertices in the contracted graph $J$.
  To reiterate, $J$ contains all edges in $G'[U_1 \cup U_2]$ and for every $\mbw
  = (w_1, w_2, w_3, w_4) \in \cF_x \cup \cF_y$ and $u, v \in U_i$, an edge $\mbw
  uv$ exists in $J$ if and only if $i = 1$ and $w_2uv \in E(G')$ or $i = 2$ and
  $w_4uv \in E(G')$. As every vertex $v \in V(G')$ satisfies $\deg_{G'}(v, U_i)
  \geq (7/16+\gamma/8)p\binom{|U_i|-1}{2}$, it follows that every $v \in V(J)$
  has its degree into $U_i$ determined by the same quantity.

  If we were to find a contracted backbone of an absorber (see
  Figure~\ref{fig:contracted-backbone}) in $J$ with $\mbw_x \in \cF_x$ (as image
  of $x'$) and $\mbw_y \in \cF_y$ (as image of $y'$), we would be done. Indeed,
  let $\mbw_x = (x_1, x_2, x_5, x_6)$. By the choice of $\cF_x$ and $\cP_x$,
  there is a pair $\{x_3, x_4\} \in \cP_x$ for which $x_1x_2x_3, x_4x_5x_6,
  xx_3x_4 \in E(G')$. Analogously, there are $y_1, \dotsc, y_6$ for which
  $y_1y_2y_3, y_4y_5y_6, yy_3y_4 \in E(G')$ and $\mbw_y = (y_1, y_2, y_5, y_6)$.
  This, using the fact that edges of (the backbone in) $J$ are actually also
  edges in $G'$, constructs a copy of the backbone of $A_{xy}^1$ in $G'$ (once
  again, see Figure~\ref{fig:absorber-backbone}).

  With this in mind, it remains to show that $J$ contains a copy of the
  contracted backbone of an absorber for some $\mbw_x \in \cF_x$ and $\mbw_y \in
  \cF_y$ as $x'$ and $y'$, respectively. Recall, by
  \ref{hnp-prop-upper-uniform}, $J$ is $(\eta, b, p)$-upper-uniform. Thus, we
  can apply Lemma~\ref{lem:reg-lem-partition-min-degree} (the sparse regularity
  lemma) to it with $h = 4$, $\xi$ (as $\lambda$), $7/16+\gamma/8$ (as
  $\delta$), and $\cF_x, \cF_y, U_1, U_2$, to get an $(\eps, p)$-regular
  partition $(V_i)_{i \in [t]}$ of $V(J)$ and a corresponding reduced graph $\cR
  := \cR((V_i)_{i \in [t]}, \eps, p, 2\eps)$.

  Let $\cU_1, \cU_2, \cZ_1, \cZ_2$ be the sets of vertices in $\cR$ whose
  corresponding clusters fully lie in $U_1, U_2, \cF_x, \cF_y$, respectively.
  Note then that, for small enough $\eps$, each $\cU_i, \cZ_i$ contains at least
  $2\xi t/3$ vertices (clusters). Additionally, all but at most $\sqrt\eps t$
  vertices $\cS \in \cU_i \cup \cZ_i$ satisfy $\deg_\cR(\cS, \cU_j) \geq
  (7/16+\gamma/16)\binom{|\cU_j|-1}{2}$ (here we used that $\eps$ and $3/t_0$
  are both extremely small with respect to $\gamma$). In fact, as $|\cU_i| \geq
  2\xi t/3$ and by choosing $\eps$ much smaller than $\xi$, by removing these at
  most $\sqrt{\eps} t$ vertices from each $\cU_i \cup \cZ_i$ of $\cR$, we obtain
  a subgraph $\cR'$ which satisfies the above minimum degree condition with a
  negligible reduction. All in all, we have a graph $\cR' \subseteq \cR$ with
  the following properties:
  \stepcounter{propcnt}
  \begin{alphenum}
    \item\label{cluster-graph-prop-1} there are at least $\xi t/2$ vertices in
      each $\cU_i, \cZ_i$, and
    \item\label{cluster-graph-prop-3} every $\cS \in \cU_i \cup \cZ_i$ satisfies
      $\deg_{\cR'}(\cS, \cU_j) \geq (7/16+\gamma/32)\binom{|\cU_j|}{2}$, for $i,
      j \in \{1,2\}$.
  \end{alphenum}

  We now constructively find a contracted backbone of an absorber in the cluster
  graph $\cR'$, after which we use \ref{hnp-prop-KLR} to finally complete the
  proof by finding a corresponding canonical copy of it in $J$. Let $X' \in
  \cZ_1$ and $Y' \in \cZ_2$.

  \begin{claim}
    There exists a contracted backbone of an absorber in $\cR'$ with $x'$ mapped
    to $X'$ and $y'$ to $Y'$.
  \end{claim}
  \begin{proof}
    Choose distinct arbitrary vertices $X_7, X_8, X_9, X_{10}, Y_7, Y_8, Y_9,
    Y_{10} \in \cU_2$ such that
    \[
      \{X', X_7, X_8\}, \{X_8, X_9, X_{10}\}, \{Y', Y_7, Y_8\}, \{Y_8, Y_9,
      Y_{10}\} \in E(\cR')
    \]
    which must exist due to \ref{cluster-graph-prop-3}.

    We can now apply Lemma~\ref{lem:simple-absorber-dense-graph} with $\cR_2 :=
    \cR'[\cU_2 \setminus \{ X_8, X_9, X_{10}, Y_8, Y_9, Y_{10}\}]$ (as $\cH$)
    and $X_7, Y_7$ (as $x, y$), since
    \[
      \delta_1(\cR_2) \geq
      \Big(\frac{7}{16}+\frac{\gamma}{32}\Big)\binom{|\cU_2|}{2} - 6|\cU_2| - 36 \geq
      \Big(\frac{5}{8}+\gamma'\Big)^2\binom{|V(\cR_2)|}{2}
    \]
    for some small $\gamma' > 0$. (Note that here we used
    \ref{cluster-graph-prop-1} and that $t$ can be assumed to be sufficiently
    large.) We conclude that there is a copy of $A^2_{X_7 Y_7}$ in $\cR_2$.

    Next, we apply Lemma~\ref{lem:switcher-dense-graph} with $\cR[\cU_1 \cup \{
    X', X_9 \}]$ (as $\cH$) and $X', X_9$ (as $u, v$) to find vertices $A_1,
    A_2, A_3, A_4 \in \cU_1$ so that
    \[
      \{A_1, A_2, A_3\}, \{A_2, A_4, X'\}, \{A_3, A_4, X_9\} \in E(\cR').
    \]
    Almost analogously, there are $B_1, B_2, B_3, B_4 \in \cU_1$ so that $\{B_1,
    B_2, B_3\}, \{B_2, B_4, Y'\}, \{B_3, B_4, Y_9\} \in E(\cR')$. This forms the
    contracted backbone of an absorber in $\cR' \subseteq \cR$ (see
    Figure~\ref{fig:contracted-backbone}).
\end{proof}

  As the $3$-density of the contracted backbone of an absorber is at most $2/3$
  by Claim~\ref{cl:m3-density}, we can use \ref{hnp-prop-KLR} in the subgraph of
  $J$ induced by the clusters corresponding to the found contracted backbone of
  an absorber in $\cR'$. This gives us a canonical copy of the contracted
  backbone of an absorber in $J$ with vertices $x'$ and $y'$ mapped into $X'$
  and $Y'$ as desired, and this finally completes the proof.
\end{proof}

\section{Putting everything together: Proof of Theorem~\ref{thm:main-theorem}}
\label{sec:main-proof}

With all the preparatory lemmas at hand, the proof of our main theorem follows
the usual steps:
\begin{enumerate*}[(i)]
  \item find an appropriate set $R$ and a not-too-large $(a,b,R)$-absorber in $G$;
  \item cover almost everything else by $o(n)$ loose paths;
  \item use the Connecting Lemma to patch those paths together over the set $R$;
  \item absorb the unused vertices of $R$ into a loose Hamilton cycle.
\end{enumerate*}

\begin{proof}[Proof of Theorem~\ref{thm:main-theorem}]
  Choose $\lambda > 0$ sufficiently small with respect to $\gamma$ so that the
  argument below works out; in particular, $\lambda <
  \min\{\lambda_{\ref{lem:robust-degree-inh}}(\gamma),
  \lambda_{\ref{lem:robust-codegree-inh}}(\gamma)\}$. Next, $\alpha =
  \alpha_{\ref{lem:absorbing-lemma}}(\gamma,\lambda)$, $\eps =
  \eps_{\ref{lem:connecting-lemma}}(\gamma/2, \alpha)$, and $\rho$ small enough
  with respect to $\eps$ and $\alpha$. Condition on $\Hnp$ having the properties
  of Lemma~\ref{lem:random-set-degree-inh}, Lemma~\ref{lem:robust-degree-inh},
  Lemma~\ref{lem:robust-codegree-inh}. Lemma~\ref{lem:almost-path-cover},
  Lemma~\ref{lem:connecting-lemma}, and Lemma~\ref{lem:absorbing-lemma}.

  Pick a random set of vertices $R \subseteq V(G)$ of size $|R| = \alpha n$. By
  the Absorbing Lemma (Lemma~\ref{lem:absorbing-lemma}) we get, w.h.p.\ over the
  choice of $R$, an $(a,b,R)$-absorber $A$, for some $a, b \in V(G) \setminus
  R$, of size at most $\lambda n$. Furthermore, by
  Lemma~\ref{lem:random-set-degree-inh}, w.h.p.\ we have that every $d$-set $S
  \subseteq V(G)$ satisfies $\deg_G(S, R) \geq (\delta_d +
  \gamma/2)p\binom{|R|-d}{3-d}$. In particular, as $\Hnp$ has the property of
  the Connecting Lemma (Lemma~\ref{lem:connecting-lemma}), the set $R$ can be
  used as the `reservoir' (set $W$ in the lemma) to find paths through it. We
  fix such a choice of $R$ and $A$ for the remainder of the proof.

  For step~(ii) of the strategy, we aim to cover almost all of $G - V(A)$ with a
  few loose paths, using Lemma~\ref{lem:almost-path-cover}. If $d = 1$ then by
  Lemma~\ref{lem:robust-degree-inh} for $V(A)$ (as $S$) there exists a set $T$
  of size $\sqrt{n}$ such that $F := G - (V(A) \cup T)$ satisfies $\delta_1(F)
  \geq (7/16+\gamma/4)p\binom{|V(F)|-1}{2}$. Otherwise, if $d=2$, set
  $T:=\varnothing$, and by Lemma~\ref{lem:robust-codegree-inh} again for $V(A)$
  (as $S$), in $F := G - V(A)$, all but at most $O(\log n/p) = o(n^2)$ pairs of
  vertices $u,v \in V(F)$ have $\deg_F(u,v) \geq (1/4 + \gamma/2)p(n-|V(A)|-2)$.
  Thus, in both cases the graph $F$ satisfies the requirements of
  Lemma~\ref{lem:almost-path-cover} and, therefore, there are $k \leq \rho n$
  disjoint loose paths that cover its vertices. Denote the $i$-th such path by
  $P_i$ and let $x_i, y_i$ be its endpoints (note, for some $i$ we may have $x_i
  = y_i$).

  Step~(iii) is to use the Connecting Lemma (Lemma~\ref{lem:connecting-lemma})
  to connect everything into a large loose cycle. Let $T = \{v_1, \dotsc, v_t\}$
  (note, $t = 0$ if $d = 2$). We want to apply it with pairs:
  \[
    \{y_i,x_{i+1}\}_{i \in [k-1]} \cup \{y_k, v_1\} \cup \{v_i, v_{i+1}\}_{i \in
    [t-1]} \cup \{v_t, a\} \cup \{b, x_1\}.
  \]
  Recall, set $R$ is chosen so that $|R| = \alpha n$ and each $d$-set $Q
  \subseteq V(G)$ has $\deg_G(Q, R) \geq
  (\delta_d+\gamma/2)p\binom{|R|-d}{3-d}$. Furthermore, the total number of
  pairs to connect is at most $k + t + 1 \leq \sqrt{n} + \rho n + 1 \leq
  \eps|R|$. Therefore, by Lemma~\ref{lem:connecting-lemma} all these pairs can
  be connected via disjoint loose paths, each of length at most $4$, whose
  internal vertices belong to $R$. These paths use a subset $R' \subseteq R$
  with $|R'| \leq 100\rho n < \alpha n/2 = |R|/2$. The union of these paths with
  $P_1, \dotsc, P_k$ makes up a loose $ab$-path $P^*$ with vertex set $(V(G)
  \setminus V(A)) \cup \{a,b\} \cup R'$. Notice that this implies $n - |V(A)| +
  |R'|$ is odd, and since $n$ is even, $|V(A)| - |R'|$ must be odd as well.
  Lastly, we use the absorbing property of $A$ to find a loose $ab$-path $P_A$
  with $V(P_A) = V(A) \setminus R'$. The union of $P^*$ and $P_A$ gives us a
  loose Hamilton cycle in $G$ as desired.
\end{proof}

{\small \bibliographystyle{abbrv} \bibliography{Transference_for_loose_Hamilton_cycles_in_random_3-uniform_hypergraphs}}

\appendix
\section{Complementary proofs}

\begin{proof}[Proof of Lemma~\ref{lem:template}]
  The proof is almost identical to the proof of
  \cite[Lemma~7.3]{ferber2022dirac} with $k=2$. We also make use of the
  following result from \cite{montgomery2019spanning}.

  \begin{lemma}[{\cite[Lemma~10.7]{montgomery2019spanning}}]\label{bipartiteResilientMatching}
    For any sufficiently large $s$, there exists a bipartite graph $R$ with
    vertex parts $X$ and $Y\cup Z$, where $Y$ and $Z$ are disjoint, with
    $|X|=3s$ and $|Y|=|Z|=2s$, and maximum degree 100, such that if we remove
    any $s$ vertices from $Z$, the resulting bipartite graph has a perfect
    matching.
  \end{lemma}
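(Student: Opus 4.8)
The plan is to construct $R$ probabilistically, following Montgomery~\cite{montgomery2019spanning}. I would first reformulate the requirement so as to eliminate the quantification over deletions. For any $s$-set $Z_0 \subseteq Z$, the graph $R - Z_0$ is bipartite between $X$ and $Y \cup (Z \setminus Z_0)$, two classes of equal size $3s$; by Hall's theorem it has a perfect matching if and only if $|N_R(S) \setminus Z_0| \ge |S|$ for every $S \subseteq X$. Since the worst admissible $Z_0$ deletes as much of $N_R(S) \cap Z$ as possible, the demand ``$R - Z_0$ has a perfect matching for \emph{every} $Z_0 \in \binom{Z}{s}$'' is equivalent to the single graph property
\[
  (\star)\qquad |N_R(S) \cap Y| + \max\{0,\ |N_R(S) \cap Z| - s\}\ \ge\ |S| \qquad\text{for all } S \subseteq X .
\]
So it suffices to exhibit a bipartite graph $R$ with parts $X$ (of size $3s$) and $Y \cup Z$ (with $|Y|=|Z|=2s$), of maximum degree at most $100$, satisfying $(\star)$.

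For the construction I would fix a large absolute constant $c$ (for definiteness, $c = 40$ will do) and set $R = R_Y \cup R_Z \cup R_0$: here $R_Y$ is a uniformly random bipartite graph between $X$ and $Y$ that is $c$-regular on the $X$-side (hence $\tfrac{3c}{2}$-regular on the $Y$-side), $R_Z$ is an independent such graph between $X$ and $Z$, and $R_0$ is a fixed ``base'' of maximum degree at most $4$ arranged so that every $x \in X$ has a neighbour in $Y$ and every vertex of $Y \cup Z$ has a neighbour in $X$ (possible since $|X| \le 2|Y|$ and $|Y \cup Z| \le 2|X|$). Then $\Delta(R) \le 2c + 4 \le 100$, and I would verify $(\star)$ \whp by a union bound over $S \subseteq X$, split by $|S|$:
\begin{itemize}
  \item For $|S| \le s$, the $Z$-term of $(\star)$ can be zero, so I aim for $|N_R(S) \cap Y| \ge |S|$ instead. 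Bounding, for each fixed $|S|$, the number of ``obstructions'' (an $|S|$-subset of $X$ and an $(|S|-1)$-subset of $Y$ receiving all $\Theta(c|S|)$ incident edges) by $\binom{3s}{|S|}\binom{2s}{|S|-1}$, each of probability at most $(|S|/(2s))^{\Omega(c|S|)}$, gives a summable total once $c$ is large enough (the sub-case $|S| = O(1)$ being treated separately, using that the random neighbourhoods are injective \whp and that $R_0$ already gives each $x$ a neighbour in $Y$).
  \item For $|S| > s$, I aim for $|N_R(S)| \ge |S| + s$, i.e.\ that $N_R(S)$ misses at most $|X| - |S|$ vertices of $Y \cup Z$; this too implies $(\star)$. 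Here $\mathbb{E}\big[|(Y \cup Z) \setminus N_R(S)|\big] \le 4s\cdot e^{-\Omega(c|S|/s)}$, a tiny-constant multiple of $s$ in this range, so a Chernoff bound for the negatively associated occupancy count gives failure probability $e^{-\Omega(c)\,s}$, below $2^{-3s} \le \binom{3s}{|S|}^{-1}$ for $c$ large. The extreme case $|S|$ within a constant of $3s$ reduces to $N_R(X) = Y \cup Z$, which $R_0$ supplies.
\end{itemize}
Summing these bounds over all $S$ shows $(\star)$ holds with high probability, so some realisation of $R$ satisfies $(\star)$ and is the graph we seek.

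The only genuine work is the calibration in the two bullets: one has to choose $c$ (and the cut-off between the regimes) so that in each range the relevant dispersion / large-deviation rate strictly exceeds the union-bound exponent $\tfrac1s\ln\binom{3s}{|S|}$, which peaks at $3\ln 2 \approx 2.08$ near $|S| = \tfrac32 s$. Splitting each $X$-vertex's neighbourhood evenly between $Y$ and $Z$ (rather than over $Y\cup Z$ as a whole) is precisely what makes the two regimes overlap with room to spare while keeping $c$, hence $\Delta(R)$, under $100$; the Hall reformulation $(\star)$ and the occupancy estimates are otherwise routine.
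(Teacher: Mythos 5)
The paper does not prove this lemma: it is imported verbatim as \cite[Lemma~10.7]{montgomery2019spanning} and used as a black box in the proof of Lemma~\ref{lem:template}, so there is no internal proof to compare against. Your argument is, however, a faithful reconstruction of the standard route (essentially Montgomery's own): reformulate via Hall's theorem, take a random constant-degree bipartite graph, and verify expansion by a union bound split into a small-set regime and a large-set regime. Your reduction to the single property $(\star)$ is correct, and the implication ``$|N_R(S)|\ge |S|+s$ forces $(\star)$'' holds in both branches of the $\max$, as you can check by splitting on whether $|N_R(S)\cap Z|\ge s$.

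A few loose ends worth tightening before this would pass as a complete proof. First, be precise about the random model: if $R_Y$ is biregular, the events ``$x$'s neighbourhood lands in $Y'$'' are no longer independent across $x\in S$, so you need negative association (or switch to the model where each $x$ picks its $c$-set independently and only bound degrees on the $Y$-side w.h.p.). Second, your ``extreme case'' of $|S|$ near $3s$ is handled not by $R_0$ but automatically by the construction: if $|X\setminus S|<3c/2$, no $v\in Y\cup Z$ can have all $3c/2$ of its random neighbours in $X\setminus S$, so $N_R(S)=Y\cup Z$; indeed $R_0$ looks redundant throughout, since every vertex already has positive random degree. Third, the large-deviation bound for the ``missed'' count should be the multiplicative form $\Pr[\text{missed}>j]\le (e\mu/j)^j$ with $\mu\le 4s(j/3s)^{3c/2}$, which correctly scales with $j=3s-|S|$ and beats $\binom{3s}{j}$ once $j\ge 3c/2$; the flat $e^{-\Omega(c)s}$ you quote does not suffice on its own for small $j$. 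None of these affect the overall viability of the plan, and the calibration remark at the end shows you have identified correctly where the union-bound pressure peaks.
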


  We construct $T$ by starting with the bipartite graph $R$ from
  Lemma~\ref{bipartiteResilientMatching} with $s=\ceil{m/2}$ and potentially
  deleting one vertex from $Z$ to ensure it has size $m$. Let $G$ be an
  $m$-vertex graph with maximum degree $4$ and no independent set of size $m/2$
  (e.g.\ take the square of a cycle on $m$ vertices) and add to $T$ the edges of
  $G$, placed on the vertex set $Z$. Thus, $T$ has at most $7s \leq 4m$
  vertices, and its maximum degree is at most $100 + 4 = 104$.

  Consider a set $Z' \subseteq Z$ with $|Z'| < m/2$ such that $V(T) \setminus
  Z'$ has even cardinality. Since $G$ has no independent set of size $s$, we can
  construct a matching $M_1$ in $T[Z\setminus Z']$ by repeatedly taking away
  edges one by one until precisely $s$ vertices are left. These remaining $s$
  vertices of $Z$ along with $X$ and $Y$ induce a subgraph in $T$ which contains
  a perfect matching $M_2$ by Lemma~\ref{bipartiteResilientMatching}. Therefore,
  $M_1 \cup M_2$ is a perfect matching of $T - Z'$.
\end{proof}

\begin{proof}[Proof of Claim~\ref{cl:m3-density}]
  We first prove a claim that allows us to split the contracted
  backbone of an absorber into three subgraphs and consider each of them separately.

  \begin{claim}\label{splitGraphM3Density}
    Let $H$ be a $3$-graph consisting of two linear $3$-graphs $H_1$ and $H_2$
    intersecting in a single vertex. Then if $\alpha \geq 1/2$ and $m_3(H_i)
    \leq \alpha$ for $i \in \{1,2\}$, we have $m_3(H) \leq \alpha$.
  \end{claim}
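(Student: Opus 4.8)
The plan is to verify the defining inequality of $m_3$ directly: it suffices to show that every subgraph $H' \subseteq H$ with $v(H') \geq 4$ satisfies $e(H') - 1 \leq \alpha\,(v(H')-3)$. First I would use that $H_1$ and $H_2$ meet in a single vertex, say $w$, so that $V(H_1)\cap V(H_2)=\{w\}$ and in particular every edge of $H$ lies in exactly one of $H_1,H_2$. Hence a given $H'\subseteq H$ induces a partition $E(H')=E_1\sqcup E_2$ with $E_i=E(H')\cap E(H_i)$; write $e_i=|E_i|$, let $W_i=\bigcup_{e\in E_i}e$ be the set of vertices covered by $E_i$, and $w_i=|W_i|$. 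Since $W_1\cap W_2\subseteq\{w\}$ and $H'$ may in addition contain vertices covered by no edge, one has $v(H')\geq w_1+w_2-1$.

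The key step is the observation that for each $i$ with $e_i\geq 1$,
\[
  e_i - 1 \leq \alpha\,(w_i - 3).
\]
Indeed, if $e_i=1$ then $w_i=3$ and both sides vanish; if $e_i\geq 2$ then, as $H_i$ is linear, any two of the edges in $E_i$ span at least $5$ vertices, so $w_i\geq 5\geq 4$, and the subgraph of $H_i$ with vertex set $W_i$ and edge set $E_i$ qualifies in the definition of $m_3(H_i)$, giving $e_i-1\leq m_3(H_i)\,(w_i-3)\leq\alpha\,(w_i-3)$.

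It then remains to combine these. If $e_1=e_2=0$ the claim is trivial since $e(H')-1=-1\leq 0\leq\alpha\,(v(H')-3)$. If exactly one $e_i$, say $e_1$, is nonzero, then $e(H')=e_1$ and $w_1\leq v(H')$, so the key step gives $e(H')-1\leq\alpha\,(w_1-3)\leq\alpha\,(v(H')-3)$. The only case requiring the hypothesis $\alpha\geq 1/2$ is $e_1,e_2\geq 1$: adding the two instances of the key step yields $e(H')-1=e_1+e_2-1\leq\alpha\,(w_1+w_2-6)+1$, and since $v(H')-3\geq w_1+w_2-4$ we are left with checking $1\leq\alpha\,(v(H')-w_1-w_2+3)$, which follows from $v(H')\geq w_1+w_2-1$ and $\alpha\geq 1/2$ because the right-hand side is then at least $2\alpha\geq 1$.

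The one place to be careful — the main obstacle, such as it is — is precisely this bookkeeping of the additive constant $1$ in the definition of $m_3$: adding the two bounds $e_i-1\leq\alpha\,(w_i-3)$ only controls $e_1+e_2-2$, whereas the target involves $e_1+e_2-1$, and the missing unit must be recovered exactly from the fact that $H_1$ and $H_2$ overlap in a single vertex (so that $v(H')\geq w_1+w_2-1$, not $w_1+w_2-2$) together with $\alpha\geq 1/2$. Everything else is routine, and applying this claim twice then reduces $m_3$ of the contracted backbone of an absorber to the three linear pieces into which it decomposes.
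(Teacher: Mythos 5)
Your proof is correct and follows essentially the same approach as the paper's: split a subgraph $H'$ of $H$ into its parts over $H_1$ and $H_2$, bound the edge count of each part using $m_3(H_i)\leq\alpha$ (handling the single-edge base case by linearity), and recover the missing unit from the vertex overlap together with $\alpha\geq 1/2$. The only cosmetic difference is that you measure the vertex count of each part by its covered vertices $W_i$ rather than by $H'\cap H_i$ as the paper does, which is if anything slightly tighter and changes nothing substantive.
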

  \begin{proof}
    Let $H' \subseteq H$ with $v(H') > 3$ and let $H_i' = H' \cap H_i$. Note
    that $e(H') = e(H'_1)+e(H'_2)$ and $v(H') \geq v(H'_1) + v(H'_2) - 1$. It
    suffices to consider only cases where $H'$ intersects both $H_1$ and $H_2$
    in at least one edge (and thus each $H'_i$ has at least three vertices).
    If this is not the
    case for some $i\in \{1, 2\}$, we have $m_3(H') \leq m_3(H'_{3-i})\leq
    \alpha$. If $v(H'_i) > 3$, by $m_3(H_i) \leq \alpha$ it follows that
    $e(H'_i) \leq \alpha v(H'_i) - 3 \alpha + 1$. If $v(H'_i)=3$, then $e(H'_i)
    = 1$, and $e(H'_i) \leq \alpha v(H'_i) - 3 \alpha + 1$ holds as well.
    Therefore,
    \[
      \frac{e(H') - 1}{ v(H') - 3} \leq \frac{e(H'_1)+e(H'_2)-1}{v(H'_1) +
      v(H'_2) - 4} \leq \frac{\alpha (v(H'_1) + v(H'_2)) -6\alpha+1 }{v(H'_1) +
      v(H'_2) - 4} \leq \alpha,
    \]
    where the last inequality follows from $\alpha \geq 1/2$.
  \end{proof}

  Consider the components of the contracted backbone of an absorber obtained after
  removing the edges of $A_{x_7y_7}^2$ (see
  Figure~\ref{fig:contracted-backbone}). Denote the component which contains
  $x'$, respectively $y'$, by $S_{x'}$, respectively $S_{y'}$. By
  Claim~\ref{splitGraphM3Density}, it is enough to show that the $3$-density of
  $A^2_{x_7y_7}$ and of $S_{x'}$ are each at most $2/3$.

  To do this, consider a $3$-graph $H$ that is a subgraph of either
  $A^2_{x_7y_7}$ or $S_{x'}$. It suffices to consider only subgraphs with no
  isolated vertices and with $e(H) \geq 2$. If $e(H) = 2$, we have $v(H) \geq 5$
  since $A^2_{x_7y_7}$ and $S_{x'}$ are both linear hypergraphs, so
  $\frac{e(H)-1}{v(H)-3} \leq \frac{2}{3}$. If $e(H) = 3$, then $v(H) \geq 6$
  since the union of any two edges contains at least $5$ vertices, and an extra
  edge requires an extra vertex (otherwise it must have at least two vertices in
  common with at least one of the first two edges). Thus, $\frac{e(H) - 1}{ v(H)
  -3 } \leq \frac{2}{3}$ in this case as well. If $e(H)=4$, we have $v(H) \geq
  8$ since $A^2_{x_7y_7}$ and $S_{x'}$ each have $9$ vertices and $5$ edges, and
  in each of them there is no edge with two vertices of degree $1$. Thus,
  $\frac{e(H) - 1}{ v(H) -3 } \leq \frac{3}{5} < \frac{2}{3}$ again. Finally,
  $e(H)=5$ implies $H$ is either $A^2_{x_7y_7}$ or $S_{x'}$ and so $v(H) = 9$
  and $\frac{e(H)-1}{v(H)-3} = \frac{4}{6} = \frac{2}{3}$.
\end{proof}

\end{document}